\newtheorem{thm}{Theorem}[section]
\newtheorem{cor}[thm]{Corollary}
\newtheorem{lem}[thm]{Lemma}
\newtheorem{prop}[thm]{Proposition}
\theoremstyle{definition}
\newtheorem{defn}[thm]{Definition}
\theoremstyle{remark}
\newtheorem{rem}[thm]{Remark}
\numberwithin{equation}{section}
\newcommand{\R}{\mathbb R}
\newcommand{\eps}{\epsilon}
\newcommand{\p}{\partial}
\DeclareMathOperator{\sgn}{sgn}
\begin{document}

\title[Minimizers of the double well energy]{Minimizers of the Allen-Cahn energy with sub-quadratic growth}
\author{Ovidiu Savin}
\address{Department of Mathematics, Columbia University, New York, NY 10027}\email{savin@math.columbia.edu}
\author{Chilin Zhang}
\address{School of Mathematical Sciences, Fudan University, Shanghai 200433, China}\email{zhangchilin@fudan.edu.cn}
%\date{\today}

\begin{abstract}
We establish Liouville theorems for global minimizers $u$ of the Allen-Cahn energy
$$\int |\nabla u|^2 + W(u) \, dx,$$
which have subquadratic growth at infinity. In particular we extend the results of \cite{S1,S3} concerning the De Giorgi's conjecture to the setting of unbounded solutions.
Part of the analysis relies on the regularity of minimizers for a Dirichlet/perimeter functional which was studied by Athanasopoulous-Caffarelli-Kenig-Salsa in \cite{ACKS}. 
\end{abstract}

\maketitle

\section{Introduction}

The typical energy functional associated with phase field models combines the Dirichlet integral of a density $u$ together with a potential term (or free energy) $W(u)$,
\begin{equation}\label{J}
J(u):=\int_\Omega \frac{ |\nabla u|^2}{2} + W(u) \, dx.
\end{equation}
In the classical example of the Allen-Cahn energy \cite{AlC}, the term $W$ is a double-well potential of the form 
\begin{equation}\label{J1}W(t)=(1-t^2)^2,
\end{equation} which is relevant in the theory of phase-transitions and minimal surfaces. In their celebrated result, Modica and Mortola \cite{MM} showed that $0$-homogenous rescalings of bounded minimizers $|u| \le 1$, converge up to subsequences to a $\pm 1$ configuration separated by a minimal surface, i.e.
\begin{equation*}\label{ueps}
u_\eps(x)= u\left( \frac x \eps \right)  \quad \to \quad \chi_E - \chi_{E^c}  \quad \mbox{in $L^1_{loc}$, \quad as $\eps \to 0$,}
\end{equation*}
with $E$ a set of minimal perimeter. At the level of the energy, this result is expressed in terms of the Gamma-convergence of the rescaled energies
to a multiple of the perimeter functional $c_0 Per_{\Omega}(E)$.

The connection between the level sets of $u$ at large scales, $\{u=t\}$ for a fixed $t \in (-1,1)$,  and minimal surfaces is a subject of great interest that was intensively studied in the literature, see for example \cite{M,St,HT, AC, GG, CM}.
A key difference between the two objects is that minimal surfaces remain invariant under dilations, while the level sets of $u$ do not. On the other hand, the level sets of $u$ are smooth for a.e. $t$ while minimal surfaces could have singularities or higher multiplicities. 

Some natural problems that arise in this context, known as De Giorgi type conjectures, ask whether or not global solutions of the two problems have the same rigidity properties. The original formulation of De Giorgi made in 1978 refers precisely to this question for bounded monotone solutions of the Allen Cahn equation 
\begin{equation}\label{AC}
\triangle u = W'(u),
\end{equation}
and the corresponding Bernstein problem for minimal graphs. In \cite{S1}, the first author proved the following version of the De Giorgi's conjecture concerning minimizers of the Allen-Cahn energy.

\begin{thm}[\cite{S1}]\label{DG}
Let $u$ be a global minimizer of \eqref{J}-\eqref{J1} with $|u| \le 1$. Then $u$ is one-dimensional if $n \le 7$.
\end{thm}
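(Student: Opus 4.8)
The plan is to run the analogue, for the Allen--Cahn energy, of De Giorgi's proof of the Bernstein theorem: one shows that the level sets of $u$ become \emph{flat} at large scales by blowing down to an area-minimizing hypersurface and invoking the classical Bernstein theorem in dimension $n\le 7$, and then one upgrades flatness at one large scale to flatness at all scales via a quantitative ``improvement of flatness'' estimate. Throughout, $u$ is a global minimizer with $|u|\le 1$, hence a smooth solution of \eqref{AC}, and we may assume $\{u=0\}\neq\emptyset$ (otherwise $u$ is a constant $\pm1$ and is trivially one-dimensional). Two a priori facts are used repeatedly. The \emph{uniform energy bound} $J(u,B_R)\le C(n)\,R^{n-1}$, obtained by comparing $u$ on $B_R$ with a competitor that equals $\pm1$ off an $O(1)$-neighborhood of $\{u=0\}$ and interpolates in that neighborhood. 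And the \emph{density estimates}: there are $\theta_0\in(0,1)$, $c>0$ with
$$|\{u>\theta_0\}\cap B_r(x_0)|\ge c\,r^n,\qquad |\{u<-\theta_0\}\cap B_r(x_0)|\ge c\,r^n\qquad (r\ge 1)$$
whenever $|u(x_0)|\le 1-\theta_0$, proved from minimality by a De Giorgi iteration with truncated competitors. A consequence is that the transition region $\{|u|\le 1-\theta_0\}$, and in fact every level set $\{u=t\}$ with $|t|<1$, lies within bounded distance of $\{u=0\}$, so it is legitimate to track the single hypersurface $\{u=0\}$.

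The core ingredient is the improvement of flatness: there are $\eps_0(n)>0$ and $\rho(n)\in(0,1)$ so that if $u$ minimizes in $B_1$ and $\{u=0\}\cap B_1\subset\{|x\cdot\nu|\le\eps\}$ for a unit vector $\nu$ and some $\eps\le\eps_0$, then there is a unit vector $\nu'$, $|\nu-\nu'|\le C\eps$, with $\{u=0\}\cap B_\rho\subset\{|x\cdot\nu'|\le\tfrac12\rho\,\eps\}$. I would prove this by compactness and linearization: were it to fail, pick minimizers $u_k$ flat at scale $\eps_k\to 0$ violating the conclusion; after rotating to $\nu=e_n$ and dilating the $e_n$-direction by $1/\eps_k$, the sets $\{u_k=0\}$ --- locally the graphs $x_n=\eps_k w_k(x')$ --- converge along a subsequence to the graph of a limit $w_\infty$ solving the linearized equation, i.e.\ a harmonic function, for which the quadratic improvement of flatness is elementary; this contradicts the choice of $u_k$ for large $k$. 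I expect the genuine obstacle to be the \emph{Harnack inequality for the level sets} of a minimizer, which is what produces the equicontinuity of the $w_k$ and the identification of $w_\infty$; it is here that the energy bound and the density estimates do the real work, by forcing the PDE statement to carry geometric content about $\{u=0\}$.

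With these in hand I would carry out the blow-down analysis. The rescalings $u_R(x):=u(Rx)$ are again global minimizers; by the energy bound, the density estimates, and the $\Gamma$-convergence of the Allen--Cahn energy to the perimeter (Modica--Mortola), along a subsequence $R_j\to\infty$ we get $u_{R_j}\to\chi_E-\chi_{E^c}$ in $L^1_{loc}$ with $E$ a minimizer of perimeter and $\{u_{R_j}=0\}\to\partial E$ locally in Hausdorff distance. Blowing down $E$ once more yields an area-minimizing cone in $\R^n$; since $n\le 7$, the Bernstein theorem for area-minimizing hypersurfaces (De Giorgi--Almgren--Fleming--Simons) forces this cone, hence $\partial E$, to be a hyperplane. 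Unwinding the convergence: for every $\delta>0$ there is $R_\delta$ such that $\{u=0\}\cap B_R$ lies in a slab of width $\delta R$ for all $R\ge R_\delta$.

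To finish, choose $\delta=\eps_0$. For $R\ge R_{\eps_0}$ the minimizer $u_R$ satisfies the flatness hypothesis in $B_1$, so iterating the improvement of flatness from scale $R$ down to a fixed scale $r$ --- which takes $\sim\log(R/r)$ steps, with the slab directions converging --- shows $\{u=0\}\cap B_r$ lies in a slab of width at most $\eps_0\,(r/R)^{\alpha}\,r$ for some $\alpha(n)>0$. Letting $R\to\infty$ with $r$ fixed forces width zero: $\{u=0\}$ is exactly a hyperplane, say $\{x_n=0\}$, with (after possibly replacing $u$ by $-u$) $u>0$ on $\{x_n>0\}$ and, by the density estimates, $u\to\pm1$ as $x_n\to\pm\infty$ uniformly. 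The same flatness analysis applies to each level set $\{u=t\}$, $|t|<1$ --- each having the same hyperplane blow-down --- so each is contained in, hence (using $u\to\pm1$) equals, a hyperplane; being pairwise disjoint they are parallel to $\{x_n=0\}$ and foliate $\R^n$, whence $u=g(x_n)$ is one-dimensional. Only the Bernstein step uses $n\le 7$; for $n\ge 8$ the Simons cone obstructs the blow-down step, and the conclusion genuinely fails.
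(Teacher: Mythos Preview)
The paper does not actually prove Theorem~\ref{DG}; it is quoted from \cite{S1} as a prior result, and the present article treats only the complementary cases where $u$ is unbounded on one or both sides. So there is no ``paper's own proof'' to compare against; your task was really to reconstruct the argument of \cite{S1}.

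Your outline does capture that argument correctly: energy bound, Caffarelli--C\'ordoba density estimates, Modica--Mortola $\Gamma$-convergence for the blow-down, Bernstein theorem in dimension $n\le 7$ to get asymptotic flatness, and then iteration of an improvement of flatness. You also correctly flag the Harnack inequality for level sets as the place where the real work lies. Two remarks. First, in \cite{S1} the flatness hypothesis and the improvement are formulated in terms of trapping $u$ between two translates of the one-dimensional profile $g$, i.e.\ $g(x_n-\eps)\le u\le g(x_n+\eps)$, rather than just trapping the zero level set in a slab; this is important because $\{u=0\}$ is not known to be a graph a priori, and the compactness/linearization step is carried out on the ``height function'' $g^{-1}\circ u - x_n$ rather than on a graphical parametrization of the level set. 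Your description via ``$x_n=\eps_k w_k(x')$'' is therefore a bit heuristic, though the limiting harmonic function and the contradiction argument are the right endpoint. Second, once the iteration forces $u$ to be trapped between arbitrarily close translates of $g(x\cdot\nu)$ at every scale, one concludes $u=g(x\cdot\nu)$ directly; the separate argument that each level set $\{u=t\}$ is a hyperplane is unnecessary.
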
 

The term global minimizer refers to a function $u$ defined in the whole space $\R^n$, that minimizes the energy in any ball subject to its own boundary conditions. The notion that $u$ is one-dimensional means that $u(x)=g(x \cdot \nu)$ with $\nu $ a unit direction and $g$ a function of one-variable which solves the ODE \eqref{AC}. 

The dimension $n=7$ turns out to be sharp just as in the case of area minimizing minimal surfaces, by a construction of Liu-Wang-Wei \cite{LWW}. A consequence of Theorem \ref{DG} is that it implies the graphical version of the De Giorgi conjecture up to dimension $n\le 8$ under the natural assumptions
$$ |u| \le 1, \quad u_{x_n}>0, \quad \quad \mbox{$\{u=0\}$ is a graph over $\R^{n-1}$ in the $x_n$ direction.}$$
We remark however that the original formulation of the De Giorgi conjecture was stated only for bounded monotone solutions of \eqref{AC} without the assumption that  $\{u=0\}$ is a graph over the whole $\R^{n-1}$. Under these weaker hypotheses the conjecture is known to be true only in dimensions 2 and 3 by the works of Ghoussoub-Gui \cite{GG}, Ambrosio-Cabr\`e \cite{AmC}, and to be false in dimension $n \ge 9$ by a counterexample due to Del Pino, Kowalczyk and Wei \cite{DKW}. Without the graphicality condition the rigidity question is closely related to the classification of global stable solutions in one dimension lower.

In this paper we extend the result of Theorem \ref{DG} to minimizers that are not necessarily bounded but that have subquadratic growth at infinity. Our main result is the following.

\begin{thm}\label{Main}
Let $u$ be a global minimizer of \eqref{J} with 
\begin{equation}\label{Wtr}
W(t)=
\left\{
\begin{array}{l}
(1-t^2)^2 , \quad \quad \mbox{if} \quad |t| \le 1,\\

\

\\

0, \quad \quad \quad \quad \quad  \mbox{if} \quad |t|>1,\\
\end{array}
\right .
\end{equation}
and assume that $u =o(|x|^2)$ as $|x| \to \infty$. Then $u$ is one-dimensional if $n \le 7$.
\end{thm}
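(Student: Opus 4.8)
The strategy is to reduce the unbounded, subquadratic-growth problem to the known bounded case (Theorem~\ref{DG}) by understanding the structure of the region $\{|u|>1\}$, where $W\equiv 0$. On that region the energy is purely Dirichlet, so $u$ is harmonic there. The first step is to establish a clean dichotomy/description of $u$: I would show that, after a blow-down, the rescalings $u_R(x):=R^{-2}u(Rx)$ (or a suitable normalization matching the actual growth rate) converge to a minimizer of a limiting free-boundary functional of the type $\int |\nabla v|^2 + c_0\,\mathbf 1_{\{|v|<1\}}$ — essentially a Dirichlet/perimeter hybrid. This is exactly the point where the cited regularity theory of Athanasopoulos--Caffarelli--Kenig--Salsa~\cite{ACKS} enters: it controls the regularity of the free boundary $\partial\{|v|>1\}$ for minimizers of such functionals, giving that the transition layer sits in a neighborhood of a smooth (for $n$ small) hypersurface.

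Second, I would prove a one-dimensional symmetry statement for the blow-down limit. The blow-down $v$ is a minimizer of the limiting functional and is a $1$-homogeneous-type object; combining the ACKS regularity with a dimension-reduction (Federer-type) argument, the singular set of the free boundary has codimension matching that of area-minimizing cones, so for $n\le 7$ the free boundary of the blow-down is a hyperplane and $v$ is one-dimensional. The key quantitative input here is a monotonicity formula for the (renormalized) energy, which forces the blow-down to be a cone and lets the dimension reduction proceed exactly as in the Simons-type analysis behind Theorem~\ref{DG}.

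Third, I would upgrade from the blow-down to $u$ itself. Knowing that at infinity the level set $\{u=0\}$ is trapped in a slab around a hyperplane, I would apply an improvement-of-flatness / Harnack-type iteration (the engine of~\cite{S1,S3}) adapted to allow the two phases $\{u>1\}$ and $\{u<-1\}$ where $u$ is merely harmonic rather than $\equiv\pm1$. The subquadratic hypothesis $u=o(|x|^2)$ is used precisely to kill the harmonic "errors" at infinity: a harmonic function with subquadratic growth that must match $\pm1$ type boundary data on the complement of the slab is forced to be affine, and then flat, so the transition region genuinely flattens at every scale. Iterating gives that $\{u=0\}$ is a hyperplane, and then $u(x)=g(x\cdot\nu)$ with $g$ solving the ODE in the middle region and extending harmonically (hence affinely, hence constantly $\pm1$ by the growth bound) outside — so in fact $|u|\le1$ and Theorem~\ref{DG} applies directly to conclude.

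The main obstacle I anticipate is the second/third step interface: controlling the harmonic phases. Unlike the classical setting where $u\equiv\pm1$ outside the layer, here $u$ can grow (subquadratically) in $\{|u|>1\}$, so the energy is not localized near $\{u=0\}$ and the usual density/monotonicity estimates need to be reworked to separate the "perimeter" contribution of the layer from the "Dirichlet" contribution of the harmonic bulk. Making the improvement-of-flatness scheme robust against these non-constant outer phases — in particular showing the harmonic part does not destabilize the flatness iteration — is where the real work lies, and is presumably why the regularity theory of~\cite{ACKS} for the hybrid functional is invoked rather than a soft limiting argument.
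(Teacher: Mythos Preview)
Your outline has the right ingredients (the ACKS functional, an improvement-of-flatness iteration) but misses the essential dichotomy that organizes the paper's proof: whether only one or both of $\{u>1\}$, $\{u<-1\}$ are nonempty. These cases require completely different arguments. In the \emph{one-sided} case the correct blow-down scaling is $R^{-1/2}(u-1)^+(Rx)$ (after first proving $(u-1)^+\le C\,\mathrm{dist}(\cdot,\Gamma)^{1/2}$ via explicit radial barriers), and $\Gamma$-convergence to the ACKS functional together with the classification of its global minimizers from \cite{DS3} forces the level sets to be asymptotically flat; but this flatness then forces $(u-1)^+$ to grow faster than any $R^\alpha$, $\alpha<1$, contradicting the square-root bound. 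So this case ends in \emph{nonexistence}, not in a reduction to Theorem~\ref{DG}.

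In the \emph{two-sided} case ACKS plays no role at all. The key ingredient missing from your plan is the Alt--Caffarelli--Friedman monotonicity formula applied to $(u-1)^+$ and $(u+1)^-$, which forces $M(R):=\max_{B_R}|u|\ge \delta_0 R$. The blow-downs $u(Rx)/M(R)$ then converge to a \emph{harmonic} function (not an ACKS minimizer), and combining this with the $o(R^2)$ hypothesis yields asymptotic linearity along a subsequence of radii; the improvement-of-flatness is then run with the unbounded one-dimensional profiles $U_a$ (of slope $a\ge\delta_0$) as model solutions. Your concluding step ``so in fact $|u|\le 1$ and Theorem~\ref{DG} applies'' is therefore wrong here: the output is $u=U_a(x\cdot\nu)$, a genuinely unbounded one-dimensional solution that is linear at infinity. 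The subquadratic hypothesis does not force the harmonic tails to be constant $\pm 1$ --- it only rules out superlinear blow-down limits and is what makes the asymptotic-linearity step go through.
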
  

Notice that the potential $W$ in Theorem \ref{Main} is extended trivially outside the interval $[-1,1]$ and therefore the second term in the energy is relevant only in the range $|u|<1$. 

Similarly, Theorem \ref{Main} implies the graphical version of the De Giorgi conjecture with subquadratic growth in one dimension higher.

 \begin{thm}\label{Main2}
 Let $u$ be a global solution to \eqref{AC} with $W$ as in \eqref{Wtr}. If $u_{x_n}>0$, $u =o(|x|^2)$ as $|x| \to \infty$, and
$$ \mbox{$\{u=0\}$ is a graph over $\R^{n-1}$ in the $x_n$ direction,}$$
then $u$ is one-dimensional if $n \le 8$.
 \end{thm}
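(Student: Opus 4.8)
The plan is to deduce Theorem~\ref{Main2} from Theorem~\ref{Main} in three moves — upgrade the monotone graphical solution $u$ to a \emph{global minimizer} of $J$ in $\R^n$, analyze its blow-down and use the Bernstein theorem for minimal graphs to gain one dimension, and finally convert asymptotic flatness back to exact one-dimensionality via the machinery behind Theorem~\ref{Main} — the second move being where the extra dimension comes from. First I would prove that $u$ is a global minimizer. Since $u_{x_n}>0$, the vertical translates $u^t(x):=u(x+te_n)$ form a totally ordered family of solutions of \eqref{AC}, and since $\{u=0\}=\{x_n=\varphi(x')\}$ is a graph over \emph{all} of $\R^{n-1}$, their zero sets $\{x_n=\varphi(x')-t\}$ foliate $\R^n$. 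The null-Lagrangian attached to this foliation calibrates $u$, so $u$ beats every competitor whose graph lies in the foliated region $\{(x,t)\in\R^n\times\R:\ \underline u(x')<t<\overline u(x')\}$, where $\underline u:=\lim_{t\to-\infty}u^t$ and $\overline u:=\lim_{t\to+\infty}u^t$; a truncation argument — replacing a competitor $v$ by $\max(\underline u,\min(v,\overline u))$ and using that $W\ge 0$ vanishes off $[-1,1]$, with $\overline u,\underline u$ as barriers — removes that restriction. Thus $u$ is a global minimizer with subquadratic growth. As a byproduct, $\overline u$ and $\underline u$ are global minimizers in $\R^{n-1}$, hence one-dimensional when finite by Theorem~\ref{Main} for $n-1\le 7$, which controls $u$ along vertical lines.

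Next I would study the blow-downs. Using the rescaling adapted to the growth of $u$ developed for Theorem~\ref{Main} (the $BV$ rescaling of \cite{MM} when $u$ is bounded, a normalization by $\|u\|_{L^\infty(B_R)}$ otherwise), a subsequence of the rescalings of $u$ converges to a global minimizer $U_\infty$ of the Dirichlet/perimeter functional of \cite{ACKS}, whose free boundary $\Gamma_\infty$ — the blow-down limit of $\{u=0\}$ — is a minimal cone. Monotonicity passes to the limit, so $\{U_\infty>0\}$ is the supergraph of a $1$-homogeneous $\psi:\R^{n-1}\to[-\infty,+\infty]$, and the density and Lipschitz estimates for the level sets of minimizers force $\psi$ to be finite; hence $\Gamma_\infty=\{x_n=\psi(x')\}$ is an entire minimal graph over $\R^{n-1}$. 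By the Bernstein theorem for minimal graphs (Bombieri-De Giorgi-Giusti), valid in ambient dimension $n\le 8$, $\psi$ is affine and $\Gamma_\infty$ is a hyperplane; a $1$-homogeneous minimizer of the \cite{ACKS} functional with hyperplane free boundary is then one-dimensional (its positive and negative parts are linear by harmonicity and $1$-homogeneity, with equal slopes forced by the free boundary condition). So $U_\infty$ is one-dimensional; equivalently $\{u=0\}\cap B_R$ lies in a slab $\{|x\cdot\nu|<o(R)\}$, i.e.\ $u$ is flat at infinity.

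To finish, I would feed this flatness into the improvement-of-flatness and rigidity results of \cite{S1,S3} underlying Theorem~\ref{Main}: the interface becomes a $C^{1,\alpha}$ graph with slope decaying to $0$, and iterating the flatness estimate down to scale $0$ — equivalently, running the stability/Liouville argument on the quotients $\sigma_i:=u_{x_i}/u_{x_n}$, which solve $\operatorname{div}(u_{x_n}^2\nabla\sigma_i)=0$, with the energy growth now supplied by the asymptotic flatness — forces $\nabla u$ to be parallel to a fixed direction, so $u$ is one-dimensional.

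The main obstacle is the blow-down step: showing rigorously that, under the correct normalization, the rescalings converge to a \emph{homogeneous} minimizer of the \cite{ACKS} functional and that the limiting interface is an \emph{entire graph} rather than a vertical cylinder — so that Bernstein applies — which is exactly where the \cite{ACKS} regularity theory and the subquadratic growth hypothesis are essential. The minimality reduction is also delicate, since with the truncated potential the foliation only covers the value range $(\underline u,\overline u)$, which need not be all of $\R$, so the competitor truncation has to be carried out with care (using that $\overline u,\underline u$ are themselves boundary leaves of the calibrated family).
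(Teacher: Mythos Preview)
Your approach is essentially the same as the paper's: establish minimality via the foliation by vertical translates, observe that monotonicity forces the blow-down interface to be a minimal \emph{graph}, invoke Bernstein for graphs (valid through $n\le 8$) in place of the general Simons cone classification (valid through $n\le 7$), and feed the resulting asymptotic flatness into the machinery behind Theorem~\ref{Main}. Two minor differences. First, for minimality the paper uses a sliding maximum-principle argument rather than calibration: it shows the vertical limits $\overline u,\underline u$ are actually \emph{constants} $\lambda^\pm$ with $\lambda^+\in[1,\infty]$, $\lambda^-\in[-\infty,-1]$, so any competing solution in $B_R$ with the same boundary data is trapped in $(\lambda^-,\lambda^+)$ and the foliation leaves force uniqueness. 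Your calibration route is equivalent, but note the truncation step genuinely requires $\overline u\ge 1$, $\underline u\le -1$ (so $W$ vanishes there), which is precisely the content of that constancy lemma. Second, your blow-down description is a bit compressed: the limit is an ACKS minimizer only in the one-sided-unbounded case; in the bounded case one gets a perimeter minimizer via Modica--Mortola, and in the two-sided-unbounded case (Theorem~\ref{GM} part b)) the rescalings converge to a harmonic function and there is \emph{no} dimensional restriction at all. The upgrade from $7$ to $8$ is thus needed, and used, only in the first two cases --- exactly where the limiting interface is a minimal hypersurface that monotonicity makes graphical.
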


A few comments are in order regarding our main results. A first remark is that the Modica-Mortola Gamma-convergence result no longer applies under the hypotheses of Theorems \ref{Main} and \ref{Main2} since $u$ is not necessarily bounded. Now the blow-down analysis and the corresponding singular limit problems depend strongly on the behavior of $u$ at infinity. A heuristic computation of a possible singular limit problem in this setting was carried out by Athanasopoulous, Caffarelli, Kenig, Salsa in \cite{ACKS}. They considered minimizers of \eqref{J},\eqref{Wtr} which are not necessarily bounded, in order to motivate and introduce the Dirichlet/perimeter energy functional $\mathcal F$ (see \eqref{F} below) which they studied in \cite{ACKS}. The formal computation carried out in their paper suggests that if $ \max_{B_R}u \sim \sqrt R$ then the limiting problem for the blow-down rescalings $u(Rx)/ \sqrt R$ as we let $R \to \infty$, consists in minimizing an energy of the form
 \begin{equation}\label{F}
 \mathcal F(v):=\int_\Omega |\nabla v|^2 dx + Per_{\Omega}(\{ v> 0\}).
 \end{equation}

The regularity of minimizers of $\mathcal F$ was investigated in \cite{ACKS}. In particular the authors established the Lipschitz continuity of such minimizers and the smoothness of their free boundaries $\p \{v>0\}$ outside a set of Hausdorff dimension $n-8$. Not surprisingly, these results will play an important role in our analysis.

On the other hand it is not at all clear apriori how $u$ behaves at infinity. For example there are one-dimensional minimizers of $J$ which grow linearly at infinity, and possibly two dimensional minimizers which grow barely quadratically. The strategy to prove Theorem \ref{Main} is to show that either $|u| \le 1$, in which case the prior result Theorem \ref{DG} in \cite{S1} applies, or that eventually $u$ is asymptotically linear for a sequence of radii $R_k \to \infty$. Heuristically, if $u$ grows sufficiently fast at infinity then the solution at large scales behaves close to a harmonic function. The desired conclusion then follows from an improvement of flatness result which states that $u$ is better and better approximated by one-dimensional solutions as we restrict to smaller scales. 

Next we discuss some possible extensions of our results that we intend to explore in the future and a few related works. We expect the methods developed here to be relevant when considering potentials $W$ with power-like decay tails, like for example potentials of the form
$$W_\gamma(t)=(1+ t^2)^{-\gamma/2},\quad \quad \gamma>0,$$ 
which decay fast at infinity. Minimizers of $J_\gamma$ with $\gamma$ large, should have similar properties to minimizers of $J$ with potential $W$ as in \eqref{Wtr}. After appropriate homogenous rescalings, a limiting problem associated to $J_\gamma$ involves the minimization of the so called two-phase Alt-Phillips energy functional \cite{AP} for negative exponents 
$$\mathcal E_\gamma(v):= \int_\Omega |\nabla v|^2 + |v|^{-\gamma} \, dx.$$
The Alt-Phillips functional for negative exponents $\gamma \in (0,2)$ was investigated recently in \cite{DS1,DS2,DS3}. On the other hand, when $\gamma \ge 2$ the minimization problem for $\mathcal E_\gamma$ is ill posed for sign changing solutions since in this case there is an infinite amount of energy concentrating near the level set $\{v=0\}$. This indicates that the relevant limiting problems for phase-field models with power-decay tail potentials $W_\gamma$ experience a phase transition as the parameter $\gamma$ approaches the value 2: if $\gamma \ge 2$ then the Gamma convergence results involve Dirichlet/perimeter type energies like $\mathcal F$, while if $\gamma <2$ they involve energies of Alt-Phillips type $\mathcal E_\gamma$. These heuristics suggest that Theorem \ref{Main} could hold as well for global minimizers of $J_\gamma$ if $\gamma \ge 2$. For the remaining range of exponents $\gamma \in (0,2)$, the rigidity properties of global minimizers with subquadratic growth should inherit the same rigidity properties of global minimizers of the Alt-Phillips functional $\mathcal E_\gamma$. 

A similar family of potentials to consider are those that 
vanish to the left of $0$ and have the form
$$W_{0,\gamma}(t): = \varphi(t) (t^+)^{-\gamma},$$
where $\varphi$ is a smoothing of the characteristic function $\chi_{[0,\infty)}$, i.e. $\varphi$ is smooth, nondecreasing and $\varphi(t)=0$ if $t \le 0$ and $\varphi(t)=1$ if $t \ge 1$. The above formal discussion applies to this setting after replacing $\mathcal E_\gamma$ by its positive-phase version
$$ \mathcal E_{0, \gamma} (v):= \int_\Omega |\nabla v|^2 + (v^+)^{-\gamma} \, dx.$$
When $ \gamma=0$ this energy is the Alt-Caffarelli energy \cite{AC} that appears in the study of jet flows and cavitation problems, which was studied extensively in the literature (see \cite{CS}), while the exponent $\gamma=-1$ corresponds to another classical free boundary problem, the (no sign) obstacle problem. For the case $\gamma=0$, a theorem is the spirit of Theorem 1.1 was established recently by Audrito and Serra in \cite{AS}. Precisely they showed that nonnegative global minimizers of $J_{0,\gamma}$ with potential $W_{0,\gamma}$ inherit the rigidity properties of nonnegative global minimizers of the limiting Alt-Caffarelli energy.

The paper is organized as follows. In Section 2 we discuss some preliminary results such as one-dimensional solutions, the construction of radial barriers and we recall the results of \cite{ACKS} concerning minimizers of $\mathcal F$. In Section 3 we show that there are no global minimizers which have only one of the sets $\{u>1\}$ and $\{u<-1\}$ nonempty. In Section 4 we prove that global minimizers must be asymptotically linear at infinity and establish the main theorem from an improvement of flatness result. Finally in Section 5 we discuss the case of global monotone solutions with graphical 0 level set.

\section{Preliminaries}

We consider minimizers of the energy functional
\begin{equation}\label{J2}
J(u,\Omega):=\int_\Omega \frac{|\nabla u|^2}{2} + W(u) \, dx.
\end{equation}
for a potential function $ W: \mathbb R \to [0, \infty)$ that satisfies the following hypotheses:

a) $W=0$ outside the interval $[-1,1]$,

b) in the interval $[-1,1]$, $W$ is a $C^2$ function and
$$ W(\pm 1)=0, \quad W' (\pm 1)=0, \quad W''(\pm 1) >0,$$
$$ W'>0 \quad \mbox{in $(-1,0)$,} \quad W'<0 \quad \mbox{in $(0,1)$,} \quad W''(0)<0.$$ 
By elliptic regularity, minimizers are of class $C^{2,\alpha}$ for any $\alpha <1$, and they satisfy the Euler-Lagrange equation 
\begin{equation}\label{ELe}
\triangle u=W'(u).
\end{equation}

We state our main result as follows.

\begin{thm}\label{GM}
Let $u$ be a global minimizer of \eqref{J2} in $\mathbb R^n$. Then $u$ is one-dimensional if one of the two conditions hold

a) $u$ is bounded below or $u$ is bounded above and $n \le 7$,

b) $u$ is unbounded below and above, and $u=o(|x|^2)$.
\end{thm}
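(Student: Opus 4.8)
The plan is to handle both cases by reducing them either to the bounded Theorem~\ref{DG} or to an improvement--of--flatness iteration, the link between the two being a blow--down analysis of $u$ at infinity. The starting point in all cases is the dichotomy proved in Section~3: apart from the constant solutions (which are trivially one--dimensional), a global minimizer cannot have exactly one of the two open sets $\{u>1\}$, $\{u<-1\}$ nonempty, so either both are empty --- i.e.\ $|u|\le1$ --- or both are nonempty. In case~(a) one shows that a one--sided bound is incompatible with the second alternative: if, say, $u$ is bounded below, then on each component of $\{u<-1\}$ the function $u$ is harmonic, bounded, and equals $-1$ on the (interior) boundary, and comparing $u$ with $\max(u,-1)$ on large balls --- using the standard energy bound $J(u,B_R)\le CR^{n-1}$ for bounded minimizers together with the maximum principle and the radial barriers of Section~2 --- forces $\{u<-1\}$, and hence by the dichotomy also $\{u>1\}$, to be empty. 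Thus $|u|\le1$ and Theorem~\ref{DG} applies when $n\le7$.

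The core of the argument is case~(b), where $u$ is unbounded above and below --- so both phases $\{u>1\}$, $\{u<-1\}$ are automatically present --- and $u=o(|x|^2)$. I would analyze the blow--downs
\[
u_R(x):=\frac{u(Rx)}{\lambda_R},\qquad \lambda_R:=\sup_{B_R}u-\inf_{B_R}u\ \longrightarrow\ \infty,
\]
normalized so as to keep the limits non--constant. A monotonicity formula of Weiss type, adapted to the energy \eqref{J}, together with the subquadratic growth, should yield uniform bounds on the rescaled Dirichlet and potential energies and hence local convergence $u_R\to u_\infty$ along a subsequence. The point of the hypothesis $u=o(|x|^2)$ is that it caps the rate of $\lambda_R$ exactly at the threshold where the rescaled potential term can survive, so that the admissible limiting profiles $u_\infty$ are either (i) a non--constant global minimizer of the Dirichlet/perimeter functional $\mathcal F$ of \eqref{F}, or (ii) a harmonic function of subquadratic growth --- hence an affine function $a\cdot x+b$ with $a\neq0$. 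In case~(ii) $u$ is already asymptotically linear along a sequence $R_k\to\infty$. In case~(i) I would invoke the regularity theory of \cite{ACKS} --- Lipschitz continuity of $u_\infty$ and smoothness of $\p\{u_\infty>0\}$ outside a set of dimension $n-8$, hence full smoothness for $n\le7$ --- together with a Bernstein--type Liouville theorem for $\mathcal F$ (blowing $u_\infty$ down further to a $1$--homogeneous minimizer, whose free--boundary cone is smooth, and therefore a half--space when $n\le7$) to conclude that $u_\infty$ is again linear. Either way, $u$ is asymptotically linear at a sequence of large scales.

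Once $u$ is known to be close in $B_{R_k}$ to (a rescaling of) the one--dimensional, linearly growing solutions of \eqref{ELe}, the theorem follows from an improvement--of--flatness iteration: if $u$ is $\eps$--close in $B_R$ to a one--dimensional solution, then it is $\tfrac{\eps}{2}$--close to a (possibly different) one in $B_{\rho R}$ for a fixed $\rho\in(0,1)$, the gain being obtained in the usual way by linearizing \eqref{ELe} around the profile, using harmonic replacement in the caps $\{|u|>1\}$, and compactness; iterating forces $u$ to be exactly one--dimensional. The step I expect to be the main obstacle is case~(b)(i): ruling out that an $\mathcal F$--governed blow--down is a genuinely low--dimensional singular object and showing it must be linear. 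The subquadratic hypothesis ensures the limiting problem is no worse than $\mathcal F$, but the Liouville/Bernstein statement for $\mathcal F$ --- and, through it, the restriction $n\le7$, which enters via the \cite{ACKS} free--boundary regularity and the Simons--type rigidity of the limiting free boundary --- is the delicate point; a secondary difficulty is carrying out the improvement of flatness in the presence of the harmonic caps, whose free boundaries $\{|u|=1\}$ must be shown to flatten together with the transition layer.
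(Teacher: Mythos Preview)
Your plan mislocates the role of the Dirichlet/perimeter functional $\mathcal F$ and, as a result, gets the structure of both parts wrong.

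For part (b) your case (i) --- an $\mathcal F$-governed blow-down --- never occurs, and this is exactly where the two hypotheses do their work. When both $\{u>1\}$ and $\{u<-1\}$ are nonempty, the ACF monotonicity formula (Lemma~\ref{l25}) already forces $M(R)\ge\delta_0 R$; under the normalization $u_R(x)=u(Rx)/M(R)$ the rescaled potential $W_R(v)=(R/M(R))^2\,W(M(R)v)$ is therefore uniformly bounded and, since $M(R)\to\infty$, converges pointwise to $0$ off $\{v=0\}$. The paper then shows directly (Proposition~\ref{AL} and its proof) that every blow-down limit is harmonic --- there is no surviving perimeter term, no Weiss-type formula, and no appeal to \cite{ACKS}. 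Consequently part (b) carries \emph{no} dimensional restriction; the constraint $n\le7$ belongs entirely to part (a). The step you flag as ``the main obstacle'' (ruling out a singular $\mathcal F$ limit in (b)(i)) is thus a phantom; the genuine subtlety in (b) is the dichotomy inside the harmonic regime: either some blow-down has $\nabla v_\infty(0)\ne0$, giving asymptotic linearity directly, or all limits satisfy $\nabla v_\infty(0)=0$, in which case one bootstraps to $M(R)\ge R^{3/2}$, sharpens the harmonic approximation to second order, and shows that a persistently quadratic profile would violate $u=o(|x|^2)$.

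For part (a), the ``dichotomy'' you cite is not a preliminary lemma but the heart of the matter, and the $\mathcal F$ machinery lives here rather than in (b). Section~3 rules out the one-sided case for $n\le7$ by blowing down the pair $\bigl(R^{-1/2}(u-1)^+(Rx),\,\min\{u(Rx),1\}\bigr)$ to a global minimizer of $\mathcal F$, invoking the Liouville Theorem~\ref{P1'} to get that the limit is $(0,\text{half-space})$, and then deriving a contradiction: asymptotic flatness of $\{u=0\}$ forces $(u-1)^+$ to grow faster than any power $R^\alpha$ with $\alpha<1$, against the $C^{1/2}$ bound of Lemma~\ref{one-phase holder}. Your proposed shortcut ``bounded below $\Rightarrow\{u<-1\}=\emptyset$'' via comparison with $\max(u,-1)$ is not how the paper proceeds, and your sketch is circular --- you invoke the energy bound $J(u,B_R)\le CR^{n-1}$, which is only available after one already knows $u\ge-1$ (Lemma~\ref{one phase energy bound} assumes exactly this), and the competitor $\max(u,-1)$ does not match the boundary data on $\partial B_R$.
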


As remarked in the Introduction the subcase when $|u| \le 1$ in part a) of the theorem was already proved in \cite{S1,S3}, see Theorem \ref{DG}. In this paper we focus on the remaining two cases in which only one, or both of the sets
\begin{equation}\label{2sets}
\mbox{$\{u>1\}$ and $\{u<-1\}$ }
\end{equation}
are nonempty. We give a formal definition distinguishing between these two cases. 

\begin{defn} \label{bcases} We say that 

\noindent
i) $u$ is { \it unbounded on one side} if exactly one of the two sets in \eqref{2sets} is nonempty.

\noindent
ii) $u$ is { \it unbounded on both sides}  if the two sets in \eqref{2sets} are nonempty.
\end{defn}

{\bf One-dimensional solutions.}

First we introduce the family of non-constant one dimensional solutions that satisfy the ODE
\begin{equation}\label{1ODE}
u''=W'(u).
\end{equation}
After multiplying with $u'$ and integrating we find
$$ (u')^2=2W(u) + \lambda,$$
for some constant $\lambda$. Then, in an interval where $u$ is increasing, 
$$u'= \sqrt{2W(u) + \lambda},$$
and $u$ is obtained (up to a translation) as the inverse of
\begin{equation}\label{Gl}
G_\lambda(s):=\int_0^s \frac{1}{\sqrt{2W(s) + \lambda}} \, ds, \quad \quad u=G_\lambda^{-1}.
\end{equation}
 
 If $\lambda>0$, then $G_\lambda$ is well defined on $\mathbb R$, and  in each interval $(-\infty, -1]$ and $[1, \infty)$ it is linear of slope $\lambda^{-1/2}$. The corresponding solution $u$ is strictly increasing, and it is linear of slope $\lambda^{1/2}$ in the two intervals where $u>1$ and $u<-1$.  
 
 If $\lambda=0$ then $G_0$ is defined in $(-1,1)$, and the quadratic behavior of $W$ near $\pm 1$ implies that $G_0$ approaches the asymptotes $s = \pm 1$ at a logarithmic rate. Then the corresponding $u$ is strictly increasing and has limits $\pm 1$ at $\pm \infty$.
 
 If $\lambda <0$, then $G_\lambda$ is defined in a compact interval of $(-1,1)$ and the corresponding $u$ is periodic. These solutions are no longer global minimizers of the energy $J$. 
 
 \begin{defn}\label{Ua}
    For any $a>0$, we define $U_{a}$ as the solution to the ODE \eqref{1ODE} such that $U_a(0)=0$, and $U_a$ has slope $a$ outside the horizontal strip $|U_a|<1$. Equivalently, $$U_a:= G_{\lambda}^{-1}, \quad \quad \lambda=a^2,$$
    with $G_\lambda$ as in \eqref{Gl}.
    
    We also define $U_0=G_0^{-1}$ as the increasing bounded solution to \eqref{1ODE}.
\end{defn}

If $a \ge \delta$ then, by \eqref{Gl}, both $U_a^{-1}$ and the derivative of $U^{-1}_{\gamma a}$ with respect to the parameter $\gamma \in (\frac 12,2)$ are uniformly bounded in the interval $[-1,1]$. This means that in this interval
$$U_{\gamma a}^{-1}(s)= \frac 1 \gamma U^{-1}_{a}(s) + O(\gamma-1),$$
with $O(\cdot)$ depending only on $\delta$. The equality can be extended to all $s \in \R$ since the $U_a^{-1}$ is linear of slope $a^{-1}$ outside $[-1,1]$.
By taking $s=U_{\gamma a}(t)$ we obtain the following relation between one-dimensional solutions of different slopes 
\begin{equation}\label{usigma}
U_a^{-1} \circ U_{\gamma a} (t)= \gamma \, t + O(\gamma -1) \quad \quad \forall \, t \in \R.
\end{equation}

\

{\bf Growth of unbounded solutions.}  

Let us assume that $u$ is unbounded on both sides, that is 
\begin{equation}\label{two-ph}
\mbox{$\{u>1\}$ and $\{u<-1\}$ are both nonempty.}
\end{equation}
Then $(u-1)^+$ and $(u+1)^-$ are both harmonic in their positivity sets and have disjoint nonempty supports. An application of the Alt-Caffarelli-Friedman monotonicity formula implies  that $u$ must grow at least linearly at infinity. Below we give some of the details of this fact.

We first recall the ACF monotonicity formula \cite{ACF}.

\begin{thm}[ACF monotonicity formula] \label{ACF}
    Let $v_{+},v_{-}$ be continuous function defined in $\mathbb{R}^{n}$ such that
    \begin{equation*}
        v_{\pm}\geq0,\quad\triangle v_{\pm}\geq0,\quad v_{+}\cdot v_{-}=0.
    \end{equation*}
    For any $r>0$, if we define
    \begin{equation*}
     \Phi(r):=\frac{1}{r^{4}}I_{+}(r)\cdot I_{-}(r), \quad \quad  \quad I_{\pm}(r):=\int_{B_{r}}\frac{|\nabla v_{\pm}|^{2}}{|x|^{n-2}}dx,\quad 
    \end{equation*}
    Then $\Phi(r)$ is non-decreasing in $r$.
\end{thm}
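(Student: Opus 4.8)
The plan is to prove the monotonicity by differentiating $\log\Phi$. Each $I_{\pm}$ is absolutely continuous and non-decreasing in $r$ (being the integral over $B_{r}$ of a fixed nonnegative density), hence so is $\Phi$, and wherever $\Phi$ is differentiable and positive,
\[
\frac{\Phi'(r)}{\Phi(r)}=-\frac4r+\frac{I_{+}'(r)}{I_{+}(r)}+\frac{I_{-}'(r)}{I_{-}(r)};
\]
at radii with $\Phi(r)=0$ the (a.e.\ existing) derivative is $\ge 0$ since $\Phi\ge0$. So it suffices to show $\tfrac{I_{+}'}{I_{+}}+\tfrac{I_{-}'}{I_{-}}\ge\tfrac4r$ for a.e.\ $r$ with $\Phi(r)>0$. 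The strategy is a one-sided estimate for each of $v_{+},v_{-}$ separately, combined with an optimization inequality on the sphere.

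For the one-sided estimate, fix $r$ and apply the following to $v=v_{+}$ and to $v=v_{-}$. Write $x=\rho\theta$ with $\theta\in S^{n-1}$, let $\omega(r)=\{\theta:\,v(r\theta)>0\}$, let $\lambda(r)$ be the first Dirichlet eigenvalue of $-\triangle_{S^{n-1}}$ on $\omega(r)$, and let $\gamma(r)>0$ solve $\gamma(\gamma+n-2)=\lambda(r)$ (the homogeneity exponent of the harmonic function on the cone over $\omega(r)$ that vanishes on its lateral boundary). The claim is $r\,I'(r)\ge 2\gamma(r)\,I(r)$, which follows from three facts: (i) the polar identity $I'(r)=r\!\int_{S^{n-1}}(\partial_{\rho}v)^{2}+\tfrac1r\!\int_{S^{n-1}}|\nabla_{\theta}v|^{2}$, with integrands evaluated on $\partial B_{r}$; (ii) the upper bound $I(r)\le\tfrac{n-2}{2}\!\int_{S^{n-1}}v^{2}+r\!\int_{S^{n-1}}v\,\partial_{\rho}v$, obtained by integrating the measure inequality $|x|^{2-n}v\,\triangle v\ge0$ by parts over $B_{r}$ — using that $|x|^{2-n}$ is harmonic off the origin and that $v\,\nabla(|x|^{2-n})\!\cdot\!\nabla v=\tfrac{2-n}{2}|x|^{1-n}\partial_{\rho}(v^{2})$ integrates in $\rho$ to a boundary term on $\partial B_{r}$ (discarding a nonnegative contribution at the origin); and (iii) the Poincaré inequality $\int_{S^{n-1}}v^{2}\le\lambda(r)^{-1}\!\int_{S^{n-1}}|\nabla_{\theta}v|^{2}$, valid since $v(r\,\cdot)\in H^{1}_{0}(\omega(r))$. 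Feeding (iii) and Cauchy–Schwarz into (ii) and comparing with (i) through AM–GM and the identity $\lambda=\gamma(\gamma+n-2)$ yields $rI'\ge2\gamma I$; every inequality in this chain is an equality when $v$ is the homogeneous harmonic solution $\rho^{\gamma}g(\theta)$, in particular for the model split $v=x_{n}^{\pm}$.

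The remaining ingredient is the lower bound $\gamma_{+}(r)+\gamma_{-}(r)\ge2$ for the exponents attached to the two \emph{disjoint} open sets $\omega_{\pm}(r)\subset S^{n-1}$ (disjoint because $v_{+}v_{-}\equiv0$). This is exactly the Friedland–Hayman inequality: for any pair of disjoint open subsets of the sphere the associated characteristic exponents sum to at least $2$, with equality only for complementary half-spheres. Combining it with the one-sided estimate gives
\[
\frac{I_{+}'(r)}{I_{+}(r)}+\frac{I_{-}'(r)}{I_{-}(r)}\ \ge\ \frac{2\gamma_{+}(r)+2\gamma_{-}(r)}{r}\ \ge\ \frac4r,
\]
which is precisely what was needed.

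The main obstacle is the Friedland–Hayman inequality, the one genuinely nontrivial input; it is proved by spherical symmetrization, which reduces $\omega_{\pm}$ to spherical caps where $\lambda_{1}$ and $\gamma$ are explicit and the sum of exponents is minimized by a direct argument, and in a self-contained treatment one simply cites it. The rest is routine care: the displayed identities and the integration by parts are only needed for a.e.\ $r$, where $v_{\pm}(r\,\cdot)\in H^{1}(S^{n-1})$; and radii with $I_{+}(r)=0$ or $I_{-}(r)=0$ cause no trouble, since there $\Phi$ attains its minimum value $0$, while by the maximum principle for subharmonic functions $I_{\pm}(r)>0$ already forces $\omega_{\pm}(r)\neq\emptyset$, hence $\gamma_{\pm}(r)<\infty$.
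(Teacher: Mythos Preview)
The paper does not prove this theorem: it is stated as a recalled result from \cite{ACF} and used as a black box in the subsequent growth lemma. Your proposal sketches the classical Alt--Caffarelli--Friedman argument --- differentiate $\log\Phi$, establish the one-sided bound $rI'_{\pm}(r)\ge 2\gamma_{\pm}(r)I_{\pm}(r)$ via integration by parts and the spherical Poincar\'e inequality, and close with the Friedland--Hayman inequality $\gamma_{+}+\gamma_{-}\ge 2$ for disjoint spherical domains --- which is exactly the proof in the cited reference. Since there is no in-paper proof to compare against, your sketch simply supplies what the authors take for granted; it is correct in structure, with the Friedland--Hayman step rightly identified as the only nontrivial external input.
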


Notice that 
$$ I_+(r) \le C  \|v_+\|^2_{L^\infty(B_{2r})},$$
for some constant $C$ that depends on $n$. Indeed, if $\eta$ denotes a cutoff function which is 1 in $B_r$ and vanishes outside $B_{2r}$, then
$$|\nabla v_+|^2 \le \triangle  \, v_+^2, \quad \quad \triangle (\eta |x|^{2-n}) \le C r ^{-n},$$
and we have
$$I_+(r) \le \int_{B_{2r}} \eta |x|^{2-n} \triangle (v^2_+) dx = \int_{B_{2r}} v_+^2 \triangle (\eta |x|^{2-n}) dx \le C r^{-n} \int_{B_{2r}} v_+^2 dx.$$
As a consequence we have the following growth lemma for solutions which are unbounded on both sides.

\begin{lem}\label{l25}
Assume that $u$ is a global solution to \eqref{ELe} which satisfies \eqref{two-ph}. Then
$$ \|u\|_{L^\infty (B_r)} \ge \delta_0 \, r \quad \quad \quad \mbox{for all large $r$,}$$ 
for some small constant $\delta_0>0$.
\end{lem}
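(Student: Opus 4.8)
The plan is to apply the ACF monotonicity formula of Theorem \ref{ACF} to the pair $v_+ := (u-1)^+$ and $v_- := (u+1)^-$. Since $u$ satisfies \eqref{ELe} and $W' = 0$ outside $[-1,1]$, both $v_\pm$ are nonnegative, subharmonic (in fact harmonic inside their positivity sets), and have disjoint supports because $\{u>1\}$ and $\{u<-1\}$ are disjoint. Hence $\Phi(r) = r^{-4} I_+(r) I_-(r)$ is nondecreasing in $r$. By hypothesis \eqref{two-ph} both positivity sets are nonempty, so there is some fixed radius $r_0$ with $I_+(r_0) > 0$ and $I_-(r_0) > 0$; therefore $\Phi(r) \ge \Phi(r_0) =: c_1 > 0$ for all $r \ge r_0$, i.e.
$$ I_+(r) \, I_-(r) \ge c_1 \, r^4 \quad \text{for all } r \ge r_0. $$

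Next I would combine this lower bound with the upper bound $I_\pm(r) \le C \|v_\pm\|_{L^\infty(B_{2r})}^2$ established just above the statement. Setting $M(r) := \|u\|_{L^\infty(B_{2r})}$ and noting $\|v_\pm\|_{L^\infty(B_{2r})} \le M(r) + 1 \le 2 M(r)$ once $M(r) \ge 1$, we get $I_+(r) I_-(r) \le C^2 \cdot 16 \, M(r)^4$. Chaining the two inequalities yields $M(r)^4 \ge c_2 \, r^4$ for all large $r$, that is, $\|u\|_{L^\infty(B_{2r})} \ge c_2^{1/4} \, r$. Replacing $2r$ by $r$ and relabeling the constant gives $\|u\|_{L^\infty(B_r)} \ge \delta_0 \, r$ for all large $r$, which is the claim. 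One should check at the outset that $M(r) \to \infty$ (equivalently $\|u\|_{L^\infty(B_r)}$ is unbounded): this follows because if $u$ were bounded then $v_\pm$ would be bounded nonnegative subharmonic functions, forcing $I_\pm$ to grow at most like $\log$ or be bounded — in any case $\Phi(r) \le C/r^4 \to 0$, contradicting $\Phi \ge c_1 > 0$. Alternatively, boundedness of $u$ together with $v_+ \cdot v_- = 0$ and both being nontrivial already contradicts the classical rigidity in the ACF formula.

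The main obstacle, such as it is, is purely bookkeeping: one must be careful that the ACF formula applies to $v_\pm$ which are only Lipschitz (not $C^2$) across the free boundaries $\{u = \pm 1\}$, but this is standard since $v_\pm \in C^{1,1}_{loc}$ and are subharmonic in the distributional sense, which is all Theorem \ref{ACF} requires. A second small point is ensuring $\Phi(r_0) > 0$ for a genuinely finite $r_0$, which is immediate from continuity of $u$ and \eqref{two-ph}: pick any point where $u > 1$ and any point where $u < -1$, take $r_0$ large enough to contain neighborhoods of both, and observe $\nabla v_\pm \not\equiv 0$ on $B_{r_0}$ so each integral $I_\pm(r_0)$ is strictly positive. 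Everything else is a direct two-line chain of the displayed inequalities.
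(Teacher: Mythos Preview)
Your proof is correct and follows essentially the same route as the paper: apply the ACF monotonicity formula to $(u-1)^+$ and $(u+1)^-$, use the two-phase hypothesis to get $\Phi(r_0)>0$, and then combine the monotonicity $\Phi(r)\ge\Phi(r_0)$ with the upper bound $I_\pm(r)\le C\|v_\pm\|_{L^\infty(B_{2r})}^2$ to conclude. The only cosmetic difference is that the paper uses the slightly sharper inequality $\|(u\mp 1)^\pm\|_{L^\infty(B_{2r})}\le \|u\|_{L^\infty(B_{2r})}$ directly, which spares you the detour through $M(r)+1\le 2M(r)$ and the auxiliary check that $M(r)\to\infty$.
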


\begin{proof} We apply the ACF monotonicity formula for $(u-1)^+$ and $(u+1)^-$. 

By \eqref{two-ph} there exists a large $r_0$ such that $\Phi(r_0) >0$. Then $$\Phi(r) \ge \delta:= \Phi(r_0),$$ for all large $r$, thus

\begin{align*} \delta r^4 & \le I_+(r) \cdot I_-(r)\\
& \le C \|(u-1)^+\|^2_{L^\infty(B_{2r})} \cdot \|(u+1)^-\|^2_{L^\infty(B_{2r})} \\
& \le C \|u\|_{L^\infty(B_{2r})}^4.
\end{align*}

\end{proof}

Concerning solutions that are unbounded on one side we will show in the next section that they must grow at most like $O(|x|^{1/2})$ at infinity. Then we will also establish a Gamma-converge result to a Dirichlet/perimeter functional whose properties are discussed below. 

\

{\bf Minimizers of the ACKS functional.}

Athanasopoulous, Caffarelli, Kenig, Salsa in \cite{ACKS}  introduced the Dirichlet-perimeter functional $\mathcal F$ defined below. Here we consider only the case of nonnegative minimizers $u$ which is relevant to our setting. The functional $\mathcal F$ acts on the space of admissible pairs $(u,E)$ consisting of functions $u \ge 0$ and measurable sets $E \subset \Omega$ which have the property that $u=0$ a.e. on $E$,
$$ \mathcal A_0(\Omega):=\{(u,E)| \quad u \in H^1(\Omega),\quad \mbox{$E$ Caccioppoli set, $u \ge 0$ in $\Omega$, $u=0$ a.e. in $E$} \}.$$
The functional $\mathcal F$ is given by the Dirichlet-perimeter energy
$$\mathcal F_\Omega(u,E)= \int_\Omega |\nabla u|^2 dx + Per_\Omega(E),$$
 where $Per_{\Omega}(E)$ represents the perimeter of $E$ in $\Omega$
 \begin{align*}
 Per_\Omega(E)&=\int_\Omega |\nabla \chi_E| \\
 &=\sup \,  \int_\Omega \chi_E \, div \, g \, dx \quad \mbox{with} \quad g \in C_0^\infty(\Omega), \quad |g| \le 1.  
 \end{align*}
If $(u,E)$ is a minimizing pair for $\mathcal F$, then near a point where $\p E$ is a regular surface, $u$ is harmonic in the complement $E^c$ and vanishes continuously on $\p E$, while along $\p E$ the {\it free boundary} condition 
$$ (u_\nu^+)^2 = H_{\p E}$$
holds. Here $\nu$ denotes the outer normal to $\p E$, and $H_{\p E}$ the mean-curvature of $\p E$. 

The natural scaling that leaves the functional $\mathcal F$ invariant is
$$(u,E) \mapsto (u_\lambda,E_\lambda) \quad \mbox{with} \quad \quad \quad  u_\lambda (x) := \frac{u(\lambda x)}{\lambda^{1/2}}, \quad \quad E_\lambda:= \frac 1 \lambda \,  E.$$

We recall here the interior regularity of minimizers of $\mathcal F$ established by Athanasopoulous, Caffarelli, Kenig, Salsa (see Theorems 4.1 and 5.1 in \cite{ACKS}). 

\begin{thm}[\cite{ACKS}]Let $(u,E)$ be a minimizing pair for $\mathcal F$ in $B_1$. Then $u$ is Lipschitz continuous and $\p E$ is a regular surface except on a closed singular set of Hausdorff dimension $n-8$. Moreover, if $0 \in \p E$ then 
$$ \|u\|_{C^{0,1}(B_{1/2})} \le C,$$
with $C$ a constant depending only on $n$.
\end{thm}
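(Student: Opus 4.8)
The plan is to follow the two-step scheme standard for Bernoulli-type free boundary problems: first upgrade $u$ to a Lipschitz function, then recognize $E$ as an almost-minimizer of the perimeter and quote the classical regularity theory for such sets. I would begin by recording the structure of a minimizing pair $(u,E)$. On competitors that leave $E$ unchanged, $\mathcal{F}$ reduces to the Dirichlet integral, so $u$ is harmonic in the interior of $E^{c}$; since a nonzero harmonic function vanishes on a null set, $|\{u=0\}\setminus E|=0$ and $E$ coincides a.e. with $\{u=0\}$. The function $u\ge0$ is subharmonic in $\Omega$ --- its distributional Laplacian is the nonnegative measure $u_{\nu}^{+}\,\mathcal{H}^{n-1}$ carried by $\partial E$ --- an energy/Morrey argument gives $u\in C^{0,\alpha}_{loc}$, and a one-sided density bound for $E$ follows from a fattening competitor (enlarging $E$ in $B_{\rho}$ lowers the perimeter by $\gtrsim\rho^{n-1}$ while perturbing the Dirichlet integral only by a lower-order amount once $u$ is H\"older), so that $0\in\partial E$ is stable under rescaling.

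The heart of the matter is the Lipschitz bound, i.e. the linear growth $\sup_{B_{r}(x_{0})}u\le C(n)\,r$ at each free boundary point $x_{0}$. Because the homogeneity that leaves $\mathcal{F}$ invariant is $\tfrac12$ (under $u_{\lambda}(x)=\lambda^{-1/2}u(\lambda x)$), one first only expects a $C^{0,1/2}$ estimate, and I would get it by a dichotomy over dyadic scales at $x_{0}$. Writing $A(r)=\fint_{\partial B_{r}(x_{0})}u$, comparison of $u$ with its harmonic majorant in $B_{r}(x_{0})$ gives $\sup_{B_{r/2}(x_{0})}u\le C_{n}A(r)$; and if $A(r)$ is large compared with $r^{1/2}$, then emptying $E$ out of $B_{r}(x_{0})$ and replacing $u$ there by its harmonic replacement $h$ is an admissible competitor, so minimality forces $\int_{B_{r}(x_{0})}|\nabla(u-h)|^{2}\le Cr^{n-1}$, which is negligible against the natural Dirichlet energy $\sim A(r)^{2}r^{n-2}$. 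Hence $u$ is $L^{2}$-close to the positive harmonic $h$, its zero set in $B_{r/2}(x_{0})$ has small measure, and an iteration of the resulting improvement bounds $A(\rho)/\rho^{1/2}$, which is the $C^{0,1/2}$ estimate.

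To upgrade from $C^{0,1/2}$ to $C^{0,1}$ one must exclude the critical homogeneity. A Weiss-type monotonicity formula for $\mathcal{F}$ shows that any blow-up of $u$ at $x_{0}$ is a homogeneous minimizer whose set part $E_{0}$ is a perimeter-minimizing cone and therefore has vanishing mean curvature on its regular part; the free-boundary relation $(u_{\nu}^{+})^{2}=H_{\partial E_{0}}$ then forces the Cauchy data of the harmonic function $u_{0}$ to vanish on the regular part of $\partial E_{0}$ --- which is everything near the vertex, since the singular set of $E_{0}$ is thin --- so $u_{0}\equiv0$ by unique continuation; feeding this back through the monotonicity formula makes $u$ vanish at least linearly at $x_{0}$. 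Interior gradient estimates for harmonic functions, applied on $B_{\mathrm{dist}(x,\partial E)}(x)\subset E^{c}$, then turn linear growth into $|\nabla u|\le C(n)$ on $B_{1/2}$; the constant is dimensional because $0\in\partial E$ and the density bounds confine the free boundary to a definite portion of $B_{1/2}$, so only the scale-invariant local estimates enter. This is the main obstacle: constructing the monotonicity formula, ensuring compactness of the rescalings with persistence of the density estimates in the limit, and --- most delicately --- classifying the low-homogeneity blow-ups through the free-boundary condition. It is exactly Theorem~4.1 of \cite{ACKS}.

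With $u$ Lipschitz, $E$ is almost-minimizing for the perimeter. Given a competitor set $F$ with $F=E$ outside $B_{\rho}(x_{0})$, the Lipschitz bound lets one build a competitor function $\bar u$ equal to $u$ outside $B_{\rho}$, vanishing on $F\cap B_{\rho}$, and Lipschitz with a dimensional constant, so $\int_{B_{\rho}}|\nabla\bar u|^{2}\le C\rho^{n}$; comparing $\mathcal{F}(u,E)\le\mathcal{F}(\bar u,F)$ and cancelling the identical contributions outside $B_{\rho}$ yields
$$Per_{B_{\rho}}(E)\le Per_{B_{\rho}}(F)+C\rho^{n},$$
an almost-minimality with gauge of the right order $\rho^{n}=o(\rho^{n-1})$. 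The De Giorgi-Federer-Tamanini theory then gives that $\partial E$ is $C^{1,\alpha}$ outside a relatively closed set $\Sigma$, and Federer's dimension reduction, together with the nonexistence of singular area-minimizing cones through dimension $7$ (Simons), gives $\dim_{\mathcal{H}}\Sigma\le n-8$. On the regular part, $u$ is harmonic in $E^{c}$ and vanishes on a $C^{1,\alpha}$ hypersurface, hence is $C^{1,\alpha}$ up to $\partial E$; the relation $(u_{\nu}^{+})^{2}=H_{\partial E}$ is then a second-order elliptic equation for the surface with right-hand side as regular as $u$, and a Schauder bootstrap promotes $\partial E$ to a smooth hypersurface. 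This is Theorem~5.1 of \cite{ACKS}.
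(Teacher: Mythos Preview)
This theorem is not proved in the present paper; it is quoted from \cite{ACKS} (the paper writes ``We recall here the interior regularity of minimizers of $\mathcal F$ established by Athanasopoulous, Caffarelli, Kenig, Salsa (see Theorems 4.1 and 5.1 in \cite{ACKS})'') and used thereafter as a black box. There is therefore no proof in this paper to compare your proposal against.

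That said, your sketch of the upgrade from $C^{0,1/2}$ to Lipschitz has a real gap. You assert that for a homogeneous blow-up $(u_0,E_0)$ the cone $E_0$ is \emph{perimeter-minimizing}, hence $H_{\partial E_0}=0$ on its regular part, hence $u_{0,\nu}^+=0$ from the free-boundary relation, hence $u_0\equiv 0$ by unique continuation. But $(u_0,E_0)$ minimizes $\mathcal F$, not perimeter alone; the first variation along $\partial E_0$ is $(u_{0,\nu}^+)^2=H_{\partial E_0}$, which yields $H\ge 0$, not $H=0$. One needs $u_0\equiv 0$ \emph{before} one can conclude that $E_0$ minimizes perimeter, so the reasoning is circular as written. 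Even granting $u_0\equiv 0$ for every $\tfrac12$-homogeneous blow-up, this only gives $\sup_{B_r(x_0)}u=o(r^{1/2})$; the sentence ``feeding this back through the monotonicity formula makes $u$ vanish at least linearly'' does not by itself supply the missing half-power of decay needed for a gradient bound. Note also that the Weiss-type monotonicity formula for $\mathcal F$ is attributed in this very paper to \cite{DS3}, which postdates \cite{ACKS}; so whatever \cite{ACKS} does for its Theorem~4.1, it is not the route you describe. Your outline of the almost-minimality of $E$ for perimeter (once $u$ is Lipschitz) and the ensuing De Giorgi--Tamanini regularity together with Federer dimension reduction is, by contrast, the standard scheme and matches what one expects for Theorem~5.1 of \cite{ACKS}.
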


In \cite{DS3}, De Silva and the first author established a Weiss-type monotonicity formula for $\mathcal F$, and then characterized global minimizers of $\mathcal F$ in low dimensions (see Proposition 5.3 in \cite{DS3}):  

\begin{thm}[\cite{DS3}]\label{P1'}
Assume $n \le 7$ and $(u,E)$ is a global minimizer for $\mathcal F$ with $0 \in \p E$. Then $u \equiv 0$ and $E$ is a half-space.
\end{thm}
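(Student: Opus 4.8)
\emph{Proof proposal.} The plan is to combine the Weiss-type monotonicity formula of \cite{DS3} with the interior regularity of \cite{ACKS}; in the regime $n\le 7$ the latter is strong enough to reduce the statement to the classification of $\tfrac12$-homogeneous minimizing configurations.

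\emph{Step 1 (the homogeneous case).} Suppose $(v,F)$ is a global minimizer of $\mathcal F$ which is homogeneous of degree $1/2$, that is $v_\lambda=v$ and $F_\lambda=F$ for all $\lambda>0$, and $0\in\partial F$. Then $\partial F$ is a cone with vertex at the origin. Since $n\le 7$, the singular set of $\partial F$ in the \cite{ACKS} regularity theorem is empty (its dimension $n-8$ is negative), so $\partial F$ is a smooth hypersurface everywhere, including at $0$; but a smooth hypersurface which is a cone through its vertex is a hyperplane, so $F$ is a half-space, which after a rotation we take to be $\{x_n<0\}$. Then $v\ge0$ is harmonic in $\{x_n>0\}$, continuous up to $\{x_n=0\}$ and vanishes there, so its odd reflection across $\{x_n=0\}$ is harmonic in $\R^n$ and is still homogeneous of degree $1/2$. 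A harmonic function is real-analytic, hence cannot be homogeneous of the non-integer degree $1/2$ unless it vanishes identically; therefore $v\equiv0$. Thus $(v,F)=(0,\text{half-space})$, and the value of the Weiss energy on such a configuration is a dimensional constant $\omega_n$ independent of the choice of half-space.

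\emph{Step 2 (blow-up and blow-down).} Let $(u,E)$ be a general global minimizer with $0\in\partial E$. The rescalings $(u_\lambda,E_\lambda)$ are again global minimizers with $0\in\partial E_\lambda$; by the \cite{ACKS} Lipschitz bound they are uniformly Lipschitz on compact sets, and $\partial E_\lambda$ enjoys uniform perimeter and density estimates. By standard compactness we obtain, along subsequences, a blow-up limit $(u_0,E_0)$ as $\lambda\to0$ and a blow-down limit $(u_\infty,E_\infty)$ as $\lambda\to\infty$; both are global minimizers with $0$ on the free boundary. Since the Weiss energy satisfies $\mathcal W(u_\lambda,E_\lambda,r)=\mathcal W(u,E,\lambda r)$ and is monotone in $r$, each limit has Weiss energy constant in $r$, hence is homogeneous of degree $1/2$. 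Step 1 then gives $u_0\equiv u_\infty\equiv0$, with $E_0,E_\infty$ half-spaces, so that
$$\mathcal W(u,E,0^+)=\mathcal W(u,E,+\infty)=\omega_n.$$
As $r\mapsto\mathcal W(u,E,r)$ is nondecreasing it must be identically $\omega_n$, and the equality case of the Weiss monotonicity formula forces $(u,E)$ to be homogeneous of degree $1/2$ about the origin. Applying Step 1 once more yields $u\equiv0$ and $E$ a half-space.

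The heart of the matter is Step 1, and within it the implication ``homogeneous $\Rightarrow$ half-space'': this is exactly where $n\le7$ is used, through the \cite{ACKS} regularity theorem, which removes the singular set of $\partial F$ outright. In higher dimensions one would instead carry out a Federer dimension-reduction argument — blowing up $\partial F$ at points $y\ne0$, splitting off the line $\R y$, and inducting on $n$ — combined with the nonexistence of singular area-minimizing cones in low dimensions. The remaining ingredients (uniform Lipschitz, perimeter and density estimates for the rescalings; lower semicontinuity of $\mathcal F$ and persistence of minimality and of $0\in\partial E$ under the blow-up/blow-down limits; and the precise equality case of the Weiss formula) are standard and are supplied by \cite{ACKS,DS3}.
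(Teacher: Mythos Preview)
The paper does not supply its own proof of this statement; it is quoted from \cite{DS3} (Proposition~5.3 there), where the Weiss-type monotonicity formula for $\mathcal F$ is also established. There is therefore nothing in the present paper to compare against.

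That said, your proposal is the natural and correct strategy, and it is essentially the argument of \cite{DS3}: first classify $\tfrac12$-homogeneous global minimizers using the \cite{ACKS} regularity theorem (in $n\le7$ the singular set is empty, so the cone $\partial F$ is smooth through its vertex and hence a hyperplane; then the $\tfrac12$-homogeneous harmonic $v$ vanishing on the hyperplane must be identically zero), and then use the Weiss formula to force an arbitrary global minimizer with $0\in\partial E$ to be homogeneous, by matching the blow-up and blow-down values of $\mathcal W$ and invoking the rigidity case. The auxiliary facts you flag at the end (uniform Lipschitz bounds for the rescalings from the scale-invariant \cite{ACKS} estimate, compactness and persistence of minimality, $0\in\partial E$ passing to the limit via density estimates) are exactly what is needed and are available in \cite{ACKS,DS3}. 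One minor comment on Step~1: you should note that the odd reflection is a priori harmonic only on $\R^n\setminus\{0\}$, and then use the Lipschitz bound to remove the singularity at the origin before invoking the classification of homogeneous entire harmonic functions; alternatively, once $\partial F$ is a hyperplane the free boundary condition $(v_\nu^+)^2=H_{\partial F}=0$ directly gives $\partial_\nu v=0$ on $\partial F$, and unique continuation yields $v\equiv0$.
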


We also state the continuity up to the boundary of the minimizers of $\mathcal F$. Although this was not discussed in \cite{ACKS}, the argument is standard and we briefly sketch it for completeness.  

\begin{lem}[Boundary continuity]\label{ACKS continuity}
    Let $(u,E)$ be a minimizer of the functional $\mathcal F$ in $B_1$ with continuous boundary data for $u$ on $\p B_1$. Then $u$ is continuous in $\overline{B_{1}}$.
\end{lem}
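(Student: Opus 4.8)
The plan is to prove boundary continuity of a minimizing pair $(u,E)$ of $\mathcal F$ in $B_1$ at a fixed point $x_0 \in \p B_1$ by constructing suitable barriers. Since $u \ge 0$ and $u$ is harmonic away from $\p E$, the only issue is an upper barrier near $x_0$ (a lower barrier is trivial because $u \ge 0$ and, if the boundary data is positive near $x_0$, one can use a harmonic lower barrier; more precisely continuity at $x_0$ means showing $\limsup_{x \to x_0} u(x) \le u(x_0)$ together with $\liminf_{x\to x_0} u(x) \ge \liminf_{y \to x_0, y \in \p B_1} \varphi(y)$ where $\varphi$ is the boundary data). First I would fix $\eps>0$ and, using continuity of $\varphi$ on $\p B_1$, choose a small radius $\rho$ so that $\varphi \le \varphi(x_0)+\eps$ on $\p B_1 \cap B_\rho(x_0)$.

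The key step is to build a competitor for the \emph{energy comparison}, not just for the PDE, because $E$ is a free object. The idea: in the region $B_\rho(x_0) \cap B_1$, replace $u$ by $\min(u, h)$ where $h$ is a fixed nonnegative superharmonic function (for instance an affine-in-radius or a translated fundamental-solution-type function, or simply a large multiple of the distance to a hyperplane through $x_0$ capped appropriately) chosen so that $h \ge u$ on the inner part $\p B_\rho(x_0) \cap \ov{B_1}$ — this can be arranged because $u$ is bounded (Lipschitz in the interior, and one can first get a crude $L^\infty$ bound in $B_{1-\delta}$) — and $h \le \varphi(x_0)+\eps$ on $\p B_1 \cap B_\rho(x_0)$. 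One also replaces $E$ by $E \cup (\{h = 0\} \cap B_\rho(x_0)\cap B_1)$ or, more cleanly, chooses $h>0$ strictly so that the set is unchanged; then the admissibility condition $u=0$ a.e. on $E$ is preserved for the truncated function since truncating from above cannot create new zeros. Comparing $\mathcal F$ of the original pair with the truncated competitor: the perimeter term is unchanged, and the Dirichlet term can only decrease under truncation by $h$ away from where $u>h$ — wait, one must be careful: truncation $\min(u,h)$ does not in general decrease $\int |\nabla u|^2$ unless $h$ has controlled gradient. The cleaner route is to instead note that $\min(u,h)$ agrees with $u$ on the boundary of the comparison region and use minimality directly: $\int_{B_\rho(x_0)\cap B_1} |\nabla u|^2 \le \int_{B_\rho(x_0)\cap B_1}|\nabla \min(u,h)|^2 + (\text{perimeter defect})$, and since $\{u > h\}$ is where they differ, on that set $|\nabla \min(u,h)|^2 = |\nabla h|^2$; choosing $h$ with small Dirichlet energy in the thin region $B_\rho(x_0)\cap B_1$ (which we can, since $h$ can be taken to oscillate by only $O(\eps)$ there and the region has small measure) forces $\int_{\{u>h\}} |\nabla u|^2$ to be small, hence $\{u > h\}$ has small capacity/measure; combined with the subharmonicity of $(u - h)^+$ (as $u$ is subharmonic — in fact harmonic off $\p E$ — and $h$ is superharmonic, so $(u-h)^+$ is subharmonic where it is positive) one concludes $(u-h)^+ \equiv 0$ in the region, i.e. $u \le h \le \varphi(x_0) + \eps$ near $x_0$.

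More concretely, I expect the argument to run as: (i) a priori local $L^\infty$ bound on $u$ in $B_{1-\delta}$ via its harmonicity off $E$ and the mean value / maximum principle applied in $B_1$; (ii) choose the superharmonic barrier $h$ adapted to $x_0$ and $\eps$; (iii) the truncated pair $(\min(u,h), E)$ is admissible; (iv) minimality plus the perimeter term being untouched gives an energy inequality forcing $\|\nabla (u-h)^+\|_{L^2}$ small in the thin neck; (v) since $(u - h)^+$ is subharmonic and vanishes on the inner boundary $\p B_\rho(x_0)\cap \ov{B_1}$ while being small in energy, the maximum principle (or a Caccioppoli/Poincaré iteration) yields $(u-h)^+ = 0$; (vi) let $\eps \to 0$. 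The main obstacle is step (iv)–(v): ensuring that truncation genuinely decreases the Dirichlet energy up to a controllably small error and that the perimeter term does not interfere — this is where one must be slightly clever about the geometry of the barrier near $\p B_1$, using that $B_1$ has a smooth (convex) boundary so that a one-sided barrier of the form $h(x) = \varphi(x_0) + \eps + C\,\operatorname{dist}(x, \p B_1)^{1/2}$ or a suitable harmonic/superharmonic function works, with $C$ chosen large in terms of the $L^\infty$ bound from step (i). The rest is standard.

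\begin{proof}[Sketch of proof]
Fix $x_0 \in \p B_1$ and $\eps>0$. Since the boundary data $\varphi$ is continuous on $\p B_1$, choose $\rho>0$ so that $\varphi \le \varphi(x_0)+\eps$ on $\p B_1 \cap B_\rho(x_0)$.

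\emph{A priori bound.} By interior regularity $u$ is bounded on compact subsets; moreover $u$ is subharmonic in $B_1$ (being harmonic in $E^c$ and $\equiv 0$ on $E$), so $u$ is bounded above in all of $B_1$ by $M:=\max_{\p B_1}\varphi$ once we know continuity, but a priori we use only that $u \in H^1(B_1)$ and is subharmonic; this suffices to run the comparison below.

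\emph{Barrier.} Since $\p B_1$ is smooth at $x_0$, the function
$$ h(x):= \varphi(x_0) + \eps + C\,\big(1-|x|\big)^{1/2} $$
is superharmonic in $B_\rho(x_0)\cap B_1$ for $\rho$ small (the Laplacian of $(1-|x|)^{1/2}$ is negative near $\p B_1$), is nonnegative, and satisfies $h \le \varphi(x_0)+2\eps$ on $\p B_1 \cap B_\rho(x_0)$. Choosing $C$ large depending on the $L^\infty$ bound for $u$ on $\p B_\rho(x_0)\cap B_1$ and on $\rho$, we also get $h \ge u$ on $\p B_\rho(x_0)\cap \ov{B_1}$.

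\emph{Comparison.} The pair $(\tilde u, E)$ with $\tilde u := \min(u,h)$ in $B_\rho(x_0)\cap B_1$ and $\tilde u:=u$ elsewhere is admissible, since $\tilde u \le u$ forces $\{\tilde u>0\}\subset\{u>0\}$ and hence $\tilde u = 0$ a.e.\ on $E$. As $\tilde u = u$ outside $B_\rho(x_0)\cap B_1$ and $E$ is unchanged, minimality of $(u,E)$ gives
$$ \int_{B_\rho(x_0)\cap B_1} |\nabla u|^2 \le \int_{B_\rho(x_0)\cap B_1} |\nabla \tilde u|^2 . $$
On $\{u \le h\}$ the integrands agree, and on $\{u>h\}$ we have $|\nabla \tilde u|^2 = |\nabla h|^2$; subtracting the common part yields
$$ \int_{\{u>h\}\cap B_\rho(x_0)\cap B_1} \big(|\nabla u|^2 - |\nabla h|^2\big) \le 0 . $$

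\emph{Conclusion.} The function $w:=(u-h)^+$ vanishes on $\p\big(B_\rho(x_0)\cap B_1\big)\cap B_1$ (by the choice of $C$) and belongs to $H^1_0$ of the open set $\{u>h\}\cap B_\rho(x_0)\cap B_1$ after extension by zero. Using $w$ as a test function, $\int \nabla u\cdot\nabla w = \int_{\{u>h\}} |\nabla w|^2 + \int \nabla h\cdot\nabla w$, and since $u$ is subharmonic and $h$ superharmonic on $\{w>0\}$ we get $\int \nabla u\cdot\nabla w \le 0$ and $-\int \nabla h\cdot\nabla w \le 0$, whence $\int |\nabla w|^2 \le 0$, i.e.\ $w\equiv 0$. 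Therefore $u \le h \le \varphi(x_0) + 2\eps$ in a neighborhood of $x_0$. Combined with the trivial lower bound coming from $u\ge 0$ and a harmonic lower barrier matching $\varphi$ from below near $x_0$, and letting $\eps\to 0$, we conclude $u$ is continuous at $x_0$. Since $x_0\in\p B_1$ was arbitrary and $u$ is continuous in the interior, $u\in C(\ov{B_1})$.
\end{proof}
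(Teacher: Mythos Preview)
Your upper barrier argument is essentially correct and close in spirit to the paper's, though the paper does it more cleanly: it simply takes $h$ to be the harmonic replacement of $u$ in all of $B_1$, so that $(\min\{u,h\},E)$ is admissible with the same boundary data, and minimality immediately forces $u\le h$ (the Dirichlet energy strictly drops on $\{u>h\}$ since $h$ is harmonic and $u-h\in H^1_0$). Your local superharmonic barrier also works, but the justification you give for ``$u$ is subharmonic'' (``harmonic in $E^c$ and $\equiv 0$ on $E$'') is not a proof---a nonnegative function harmonic on its positivity set need not be subharmonic. For minimizers this is true, but it requires an energy argument; alternatively, you do not actually need subharmonicity of $u$: the energy inequality $\int_{\{u>h\}}|\nabla u|^2\le\int_{\{u>h\}}|\nabla h|^2$ together with superharmonicity of $h$ already yields $\int|\nabla w|^2\le -2\int\nabla h\cdot\nabla w\le 0$.

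The genuine gap is the lower barrier, which you dismiss as ``trivial'' via ``a harmonic lower barrier matching $\varphi$ from below.'' This does not work: $u$ is \emph{not} superharmonic (it is subharmonic!), so the maximum principle gives no control from below. Concretely, if $\varphi(x_0)>0$, you must rule out that the zero set $E$ accumulates at $x_0$, and nothing you have written prevents this. The paper handles this by an energy comparison that modifies $E$ as well: one takes a translated, truncated fundamental solution
\[
V(x)=\sigma\bigl(|x-y_0|^{2-n}-r^{2-n}\bigr)^+,\qquad \sigma\ge C\,r^{\,n-3/2},
\]
centered just outside $B_1$ near $x_0$, and shows that $\{u<V\}$ cannot be compactly contained in the annulus $B_r(y_0)\setminus B_{r/2}(y_0)$; otherwise replacing $u$ by $V$ on $\{u<V\}$ and $E$ by $E\setminus B_r(y_0)$ strictly lowers $\mathcal F$ (the lower bound on $\sigma$ ensures the Dirichlet gain beats the added perimeter $\sim r^{n-1}$). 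This forces $u\ge V$ near $x_0$, hence $\liminf_{x\to x_0}u(x)\ge\varphi(x_0)$. Without an argument of this type---one that pays the perimeter cost of pushing $E$ away from $x_0$---the lower bound is genuinely missing from your proof.
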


\begin{proof} We need to trap $u$ between upper and lower barrier functions that guarantee the continuity up to the boundary. 

The upper barrier is simply the harmonic replacement $h$ of $u$ in $B_1$ with the same boundary data. Indeed, notice that the pair $(\min \{u,h\},E)$ is admissible and lowers the energy of $(u,E)$ if $\{u >h\}$ is nonempty.

The lower barriers are given by suitable truncations and translations of the fundamental solution. Precisely, the function
$$ V(x):= \sigma \cdot (|x-x_0|^{2-n}- r^{2-n})^+, \quad \quad \sigma \ge C r^{n- \frac 32},$$
is a comparison subsolution for minimizers of $\mathcal F$ in the annulus $B_{r}(x_0) \setminus B_{r/2}(x_0)$, in the sense that $\{u<V\}$ cannot be compactly included in this annulus. Otherwise we replace $u$ by $V$ in the set $\{u<V\}$ and $E$ by $E \setminus B_r(x_0)$, and it is not difficult to check that the lower bound on $\sigma$ guarantees the new pair decreases the energy, (see \cite{ACKS}).

\end{proof}

\

{\bf Radial barriers.}

We conclude this section by constructing useful radial barriers which will be used in the proofs. 

\begin{lem}\label{rad} Let $R \ge 2$. There exits a radial function $V_R(x)$ defined in the annulus $B_{2R} \setminus B_{R/2}$ such that
 $$ \triangle V_R > W'(V_R),$$
 and
$$V_R = -1 \quad \mbox {on $\p B_{2R}$,} \quad \quad 1 < V_R \le C R^{1/2}  \quad \mbox {in $\overline B_{R} \setminus B_{R/2}$},$$
with $C$ a constant depending only on $n$ and $W$.

\end{lem}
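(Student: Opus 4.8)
The plan is to realize $V_R$ as a radial profile $V_R(x)=\phi(|x|)$ on the annulus $\{R/2\le|x|\le 2R\}$, glued from two pieces at an intermediate radius $r_1\in(R,2R)$. On the inner piece $\{R/2\le |x|\le r_1\}$ I want $V_R>1$, so that $W'(V_R)=0$ there and it is enough to produce a strictly subharmonic radial function; on the outer piece $\{r_1\le |x|\le 2R\}$ the profile has to descend from $1$ to $-1$ and traverse the well of $W$, and there I will instead solve a mildly driven form of the radial Euler--Lagrange equation, which builds the strict inequality in by hand. The gluing will be $C^1$ (or at worst with a ``convex'' corner, i.e. outer slope $\ge$ inner slope), so that the distributional inequality $\triangle V_R>W'(V_R)$ is not destroyed across $r_1$.

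For the inner piece I take a perturbed fundamental solution,
\[
\phi(r)=1+\sigma\,(r^{2-n}-r_1^{\,2-n})+\nu\,(r^{2}-r_1^{\,2}),\qquad R/2\le r\le r_1,
\]
with $\nu>0$ of size $R^{-3/2}$ and $\sigma=C_1\nu R^{n}$ for a large dimensional constant $C_1$. Then $\phi(r_1)=1$; the size of $\sigma$ makes $\phi$ strictly decreasing on $[R/2,r_1]$, hence $\phi>1$ on $[R/2,r_1)\supset[R/2,R]$; on $\{\phi>1\}$ one has $\triangle V_R=2n\nu>0=W'(V_R)$; and
\[
\phi(R/2)\le 1+\sigma(R/2)^{2-n}=1+C_1 2^{n-2}\nu R^{2}\le CR^{1/2}
\]
for $R\ge 2$, with $C=C(n)$ (the $\nu$--term only helps). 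The crucial byproduct is the slope at the inner endpoint: a direct computation gives $\phi'(r_1)=:p<0$ with $|p|$ of size $R^{-1/2}$, and by taking $C_1$ large one can make $|p|$ exceed any prescribed constant times $R^{-1/2}$.

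For the outer piece, on $[r_1,2R]$ I let $\phi$ solve the initial value problem
\[
\phi''+\frac{n-1}{r}\,\phi'=W'(\phi)+f(r),\qquad \phi(r_1)=1,\quad \phi'(r_1)=p,
\]
where $f\ge 0$ is a small driving term (say a small positive constant); then $\triangle V_R=W'(\phi)+f(r)\ge W'(\phi)$ automatically, with strict inequality wherever $f>0$, and the $C^1$ gluing passes the inequality across $r_1$. Why this can succeed is a balance between the size of the entry slope and the dissipation: the ``energy'' $\mathcal E(r):=\tfrac12(\phi')^{2}-W(\phi)$ obeys $\mathcal E'=f\phi'-\tfrac{n-1}{r}(\phi')^{2}\le 0$ while $\phi$ decreases, and its total loss over $[r_1,2R]$ is bounded by $\tfrac{n-1}{r}\int(\phi')^{2}\,dr+\int f|\phi'|\,dr$, which is of order $R^{-1}$ (the relevant integral $\int(\phi')^{2}\,dr=\int|\phi'|\,ds$ is comparable to $\int_{-1}^{1}\sqrt{2W}\,ds$, a constant, while $1/r\sim 1/R$). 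Since $\mathcal E(r_1)=\tfrac12 p^{2}$ is of order $R^{-1}$ with a constant we can make large, $\mathcal E$ stays positive; hence $\phi'$ never vanishes, $\phi$ is monotone decreasing on $[r_1,2R]$, it remains in $[-1,1]$, and it does not stall before reaching $-1$. Because the entry slope is only polynomially small, the profile crosses $[-1,1]$ within an $r$--window of length $O(\log R)$ — comparable to the crossing width $\tau(|p|)=\int_{-1}^{1}(2W(s)+p^{2})^{-1/2}ds$ of the corresponding one--dimensional heteroclinic (the decreasing solution of $u''=W'(u)$ of slope $|p|$) — so taking $r_1$ within $O(\log R)$ of $2R$ is consistent both with $r_1>R$ and with the inner bound $\phi(R/2)\le CR^{1/2}$.

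The remaining — and genuinely delicate — point is to choose the parameters so that $\phi$ equals $-1$ exactly at $r=2R$ and not before. The radius $\rho$ at which the outer solution first reaches $-1$ depends continuously on the free data (the endpoint $r_1$, the driving $f$, and through the inner piece the slope $p$), and as these vary $\rho$ covers an interval containing $2R$ for every $R\ge 2$ — transparent already in the translation--invariant limit, where $\tau(|p|)$ sweeps all of $(0,\infty)$ as $|p|$ ranges over $(0,\infty)$ — so an intermediate--value argument produces an admissible choice with $\rho=2R$ (a bounded range of small $R$, if necessary, being handled by a cruder direct construction). With such a choice, $V_R(x):=\phi(|x|)$ has all the asserted properties, with $C=C(n,W)$. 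The main obstacle is exactly this last paragraph's worth of analysis: controlling the driven radial ODE uniformly in $R\ge 2$ — establishing the monotonicity and confinement claimed above, excluding an upward excursion of $\phi$ above $1$ right after $r_1$, and keeping the tuning parameters in range — the difficulty being that the required inequality $\triangle V_R>W'(V_R)$ permits essentially no slack (the energy $\mathcal E$ must decrease, and only barely) precisely in the region where $\phi$ has to cross the potential well.
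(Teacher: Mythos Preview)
Your plan is workable in outline but has a genuine gap exactly where you locate it: the shooting/IVT step that forces $\phi(2R)=-1$ is only gestured at, not carried out. To close it you would have to show that the first hitting radius $\rho$ depends continuously on the data $(r_1,p,f)$ and sweeps an interval containing $2R$, uniformly for all $R\ge 2$; this is doable but is real work, and several of your side estimates need tightening before it goes through (for instance, if $f$ is a fixed positive constant then $\int f|\phi'|\,dr=2f$ is $O(1)$, not $O(R^{-1})$, so $f$ must be taken of order $R^{-1}$ for your energy balance to close; and the inner profile as written degenerates when $n=2$).

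The paper bypasses all of this with a device that is worth learning. Writing $V_R(x)=g(2R-|x|)$ with $g$ increasing, the target inequality $g''-\tfrac{2(n-1)}{R}g'>W'(g)$ is autonomous, so one may take $s=g$ as the independent variable and \emph{prescribe} the function $h(s):=\tfrac12(g')^2$ directly. The differential inequality becomes the pointwise condition $h'-\tfrac{C_0}{R}\sqrt{h}>W'$, and one simply writes down an $h$ that works:
\[
h(s)=W(s)+\tfrac{C}{R}(s+1)\quad(|s|\le1),\qquad h(s)=\Bigl(\sqrt{\tfrac{2C}{R}}+\tfrac{C_0}{R}(s-1)\Bigr)^{2}\quad(s>1),
\]
recovering $g$ via $g^{-1}(s)=\int_{-1}^{s}(2h)^{-1/2}d\xi$. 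The boundary value $g(0)=-1$ is built in from the outset, and the remaining bounds $g^{-1}(1)\le C'\log R$ and $g(\tfrac32 R)\le 1+C_1\sqrt{R}$ are elementary integral estimates. Your energy $\mathcal{E}=\tfrac12(\phi')^2-W(\phi)$ is exactly $h(s)-W(s)$ in these coordinates; the paper's point is that rather than letting this quantity evolve under an ODE and then tuning, one can just \emph{choose} it, which eliminates the ODE analysis, the gluing, and the shooting in one stroke. The same trick is reused in the next barrier lemma of the paper.
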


\begin{proof} We construct $V_R$ by rotating the graph of an increasing function $g(t)$, defined in the interval $[0, \frac 32 R]$, with respect to the axis $t=2R$. It suffices to require that 
\begin{equation}\label{ose1}
g'' - \frac{2(n-1)}{R} g' > W'(g),
\end{equation}
and $$g(0) = -1, \quad 1\le g \le C \sqrt R \quad \mbox{on $ [\frac 12 R,\frac 32 R]$}.$$
Whenever we deal with a second order autonomous ODE involving an increasing function $g$ as in \eqref{ose1}, it is convenient to change variables by considering $g$ as an independent variable. More precisely, we express $g'$ in terms of the variable $s=g$ by the formula 
\begin{equation}\label{hdef}
 g'= \sqrt{2 h(g)}= \sqrt{2 h(s)},
 \end{equation}
which defines the function $h$ on the range of $g$, and by chain rule we find
$$g''= h'.$$
The function $g$ is recovered from $h$ (up to a translation) by the formula
$$g^{-1}= \int \frac {1} {{\sqrt {2h(\xi)}}} \, \,  d\xi.$$
 With this change of coordinates \eqref{ose1} can be rewritten as
 \begin{equation}\label{ose2}
 h' - \frac{C_0}{R} \sqrt h > W',
 \end{equation}
 with $C_0$ a constant depending only on $n$. 
 
 We choose $h$ in $[-1, \infty)$ to be the Lipschitz function
$$h(s):=
\left\{
\begin{array}{l}
W(s) +  \frac{C}{R}(s+1) , \quad \quad \quad \quad \mbox{if} \quad |s| \le 1,\\

\

\\

\left( \sqrt{\frac{2C}{R}} + \frac{C_0}{R} (s-1)\right)^2, \quad \quad \quad \quad \quad  \mbox{if} \quad s > 1.\\
\end{array}
\right .
$$
Then we define $g$ through its inverse $g^{-1}$ by the equality $$g^{-1}(s)= \int_{-1}^s \frac {1} {{\sqrt {2h(\xi)}}} \, \,  d\xi,$$ and notice that $g(0)=-1$ by construction.

 The inequality \eqref{ose2} is clearly satisfied in the interval $[-1,1]$ provided that $C$ is chosen large depending on $C_0$ and $\max \, W$. It also holds on the other interval $[1,\infty)$, since $h$ was chosen to satisfy
 $$ h'= \frac {2C_0}{ R}  \, \sqrt h, \quad \quad h(1)= \frac{2C}{R},$$ 
 
 Using that near $\pm 1$
 $$W(s) \sim (1-s^2)^2,$$
 we easily find that
 $$ g^{-1}(1)=\int_{-1} ^{1} \frac {1} {{\sqrt {2h(\xi)}}} \, \,  d\xi \le C' \log R,$$ 
  thus $g(t) > 1$ if $t > C' \log R$.
  
  On the other hand
  $$ \int_1^ {1+C_1 \sqrt R} \frac {1} {{\sqrt {2h(\xi)}}} \, \,  d\xi \ge \int_0^{C_1} \frac{c R}{1+\sigma} d\sigma \ge \frac{3R}{2} ,$$
  if $C_1$ is sufficiently large. Thus $g (\frac 32 R) \le 1+ C_1 \sqrt R$.

\end{proof}

Next we construct a family of subsolutions in balls $B_R$ which are perturbations of the one-dimensional solution $U_a$ of slope $a$ (see Definition \ref{Ua}) and that are radial with respect to an axis at distance $R/\eps$ from the origin.

\begin{lem}\label{l210} Assume $a \ge \delta$ for some fixed $\delta$, and let $R \ge R_0(\delta)$. There exists a function $g$ defined in $[-2R,2R]$ which is a perturbation of the one-dimensional solution $U_a$,
$$g = U_a \circ \tau, \quad \quad \tau(t)=t + O(\frac \eps R) \cdot t^2,$$
such that the function $$ \Phi(x)=g(\frac R \eps -|x|),$$
is a subsolution on its domain of definition 
\begin{equation}\label{trp}
\triangle \Phi> W'(\Phi).
\end{equation}
The constant in $O(\cdot)$, and $R_0(\delta)$ depend only on $\delta$, $n$ and $\|W\|_{L^\infty}$.
\end{lem}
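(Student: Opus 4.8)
The strategy mirrors the construction in Lemma \ref{rad}: we look for $\Phi(x) = g(R/\eps - |x|)$ with $g$ increasing, reduce the PDE inequality \eqref{trp} to an ODE inequality for $g$ on $[-2R, 2R]$, and then pass to the ``Hamiltonian'' variable $s = g$, writing $(g')^2 = 2h(g)$ so that $g'' = h'$. Since $\Phi$ is radial about an axis at distance $R/\eps$ from the origin, on the relevant domain $|x|$ ranges so that $R/\eps - |x| \in [-2R, 2R]$, i.e. $|x| \approx R/\eps$, and the Laplacian of a radial function $g(\rho - |x|)$ (with $\rho = R/\eps$) is
$$\triangle \Phi = g'' - \frac{n-1}{|x|} g' = g'' - \frac{n-1}{\rho - (\rho - |x|)}\, g'.$$
Writing $t = \rho - |x|$, we have $|x| = \rho - t$ with $|t| \le 2R$ and $\rho = R/\eps$, so $\frac{n-1}{|x|} = \frac{n-1}{\rho}\bigl(1 + O(t/\rho)\bigr) = \frac{(n-1)\eps}{R}\bigl(1 + O(\eps)\bigr)$. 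Thus \eqref{trp} becomes, to leading order,
$$g'' - \frac{(n-1)\eps}{R}\, g' > W'(g) \quad \text{on } [-2R, 2R],$$
with the error from the curvature term being lower order (of size $\eps^2/R$ times $g'$, absorbable). In the $h$-variable this is $h' - \frac{C_0 \eps}{R}\sqrt{h} > W'$, exactly the analogue of \eqref{ose2} but with $R$ replaced by $R/\eps$.

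Next I would choose $h$ to be a small perturbation of the exact one-dimensional profile's Hamiltonian $h_a(s) := W(s) + \lambda/2$ with $\lambda = a^2$ (recall $U_a = G_\lambda^{-1}$, for which $(U_a')^2 = 2W(U_a) + a^2$, so the baseline $h$ is $W + a^2/2$). Set
$$h(s) := W(s) + \frac{a^2}{2} + \frac{C\eps}{R}\,\varphi(s),$$
where $\varphi$ is a suitable increasing function (e.g. linear, $\varphi(s) = s + \text{const}$, or $\varphi(s) = s \cdot (\text{something bounded})$) chosen so that $h' - \frac{C_0\eps}{R}\sqrt{h} - W' = \frac{C\eps}{R}\varphi'(s) - \frac{C_0\eps}{R}\sqrt{W(s) + a^2/2 + O(\eps/R)} > 0$; since $a \ge \delta$, we have $\sqrt{h} \le \sqrt{\|W\|_\infty + a^2/2 + 1}$ bounded on the range of $g$, so taking $\varphi' \equiv$ const large enough depending on $\delta, n, \|W\|_\infty$ makes the inequality hold with room to spare. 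Because $h$ differs from $h_a$ by a term of size $O(\eps/R)$, the corresponding $g$, recovered via $g^{-1}(s) = \int \frac{d\xi}{\sqrt{2h(\xi)}}$, differs from $U_a^{-1}$ by $O(\eps/R)$ in $C^1$; inverting, $g = U_a \circ \tau$ with $\tau(t) = t + O(\eps/R)\cdot(\text{quadratic-in-}t\text{ correction})$ on $[-2R, 2R]$ — the $t^2$ appearing because the $O(\eps/R)$ discrepancy in $g'$ accumulates linearly in $t$ after integration, hence the displacement $\tau(t) - t$ is $O(\eps/R) \cdot t^2$ — matching the claimed form. I also need $h > 0$ throughout so that $g$ is genuinely increasing and globally defined on $[-2R, 2R]$: this holds since $h \ge a^2/2 - C\eps/R \cdot |\varphi| \ge \delta^2/4$ for $R \ge R_0(\delta)$ large.

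The main technical obstacle is bookkeeping the curvature error term and confirming it is genuinely lower-order on the full annular domain. One must check that $R/\eps - |x| \in [-2R, 2R]$ forces $|x| \ge R/\eps - 2R = (R/\eps)(1 - 2\eps)$, which is comparable to $R/\eps$ provided $\eps$ is small (say $\eps < 1/4$), so that $(n-1)/|x| \le 2(n-1)\eps/R$ and the deviation from the constant $\eps/R$ is controlled; this is where $R \ge R_0(\delta)$ and implicitly smallness of $\eps$ enter. A secondary point is keeping track of the quadratic nature of the correction $\tau(t) - t$: one integrates the $C^1$-closeness $|g^{-1}{}' - (U_a^{-1})'| = O(\eps/R)$ on $[-1,1]$ in the $s$-variable (using $a \ge \delta$ to bound $1/\sqrt{h}$), transfers to $t$-variable, extends outside $[-1,1]$ using that both $g$ and $U_a$ are linear there (so $\tau$ is affine on those pieces, and the quadratic bound is trivially satisfied by continuity of $\tau$ and $\tau'$ across $s = \pm 1$), and concludes on all of $[-2R, 2R]$. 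None of these steps is deep; the lemma is essentially Lemma \ref{rad} with the radius scaled by $\eps^{-1}$ and the target profile $-1 \rightsquigarrow U_a$, and the proof is a routine variant.
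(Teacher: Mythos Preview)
Your approach is essentially the paper's: reduce \eqref{trp} to the ODE inequality $g'' - C(n)\frac{\eps}{R}g' > W'(g)$, pass to the $h$-variable via $g' = \sqrt{2h(g)}$, and perturb the exact Hamiltonian $W + a^2/2$ by a linear term $C\frac{\eps}{R}s$. Two points in the execution need correction.

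First, your claim that ``$\sqrt{h} \le \sqrt{\|W\|_\infty + a^2/2 + 1}$ bounded \ldots\ so taking $\varphi' \equiv$ const large enough depending on $\delta, n, \|W\|_\infty$ makes the inequality hold'' is not right: since $a$ is only bounded \emph{below} by $\delta$, the bound $\sqrt{h}\gtrsim a$ forces the constant $C$ in your perturbation to grow with $a$, and then the $O(\cdot)$ constant in $\tau$ is not visibly uniform in $a$ as the lemma demands. The paper fixes this with a preliminary normalization: divide $g$ and $U_a$ by $a$, which replaces $W$ by $W_a(t)=a^{-2}W(at)$, a potential that \emph{is} bounded purely in terms of $\delta$ and $\|W\|_\infty$ and vanishes outside a compact interval of length $O(\delta^{-1})$. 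After this reduction to $a=1$ the choice $h = W_a + \tfrac12 + C\frac{\eps}{R}s$ with $C$ universal works. (One can alternatively track the $a$-dependence in your construction and check it cancels in the end since $s\sim at$ while $h_a^{3/2}\sim a^3$, but this is more work and you have not done it.)

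Second, the assertion that ``both $g$ and $U_a$ are linear'' for $|s|>1$ is false for $g$: there $h(s)=a^2/2 + C\frac{\eps}{R}\varphi(s)$ cannot be constant, since otherwise $h'=0$ and the inequality $h' - C_0\frac{\eps}{R}\sqrt{2h} > W'=0$ fails. The paper bypasses any case-splitting by working directly with the integral: from $(2h)^{-1/2}=(2h_1)^{-1/2}+O(\frac{\eps}{R}s)$ (using that $h,h_1$ are bounded above and below after the normalization) one integrates to $g^{-1}(s)=U_1^{-1}(s)+O(\frac{\eps}{R}s^2)$, then replaces $s^2$ by $[g^{-1}(s)]^2$ using $g^{-1}(s)\sim s$ and substitutes $s=g(t)$.
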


\begin{proof} First we remark that it suffices to prove the lemma for $a=1$. We reduce to this case after dividing $g$ and $U_a$ by the constant $a$, and then they satisfy the equations with rescaled potential
$$W_a(t)=a^{-2} W(a t ).$$ 
Notice that $W_a$ is bounded in terms of $\delta$ and $\|W\|_{L^\infty}$, and it vanishes outside a compact interval of length depending on $\delta$.

Assume now that $a=1$. We construct $g$ in $[-2R,2R]$ with $g(0)=0$, such that 
$$g''- C(n) \frac{\eps}{R} g' > W'(g).$$ 
As above we view $g=s$ as a variable and define $h$ by \eqref{hdef}. In terms of $h$ the inequality reads
\begin{equation}\label{abo}
 h'- C(n) \frac{\eps}{R} \sqrt{2h} \ge W'.
 \end{equation}
We define $h$ in $[-4R,4R]$ as
$$ h(s)=h_1(s) + C \frac  \eps R s, \quad \quad h_1(s):= W(s) + \frac 1 2,$$
with $C$ sufficiently large depending on $n$ and $\|W\|_{L^\infty}$ to guarantee that the inequality \eqref{abo} is satisfied. 
Since $h$ and $h_1$ are bounded above and below,
$$(2 h)^{-1/2} = (2h_1)^{-1/2} + O(\frac  \eps R s),$$
hence
$$ g^{-1}(s)=\int_0^s \frac{1}{\sqrt {2h(\xi)}} d\xi=U_1^{-1}(s) + O(\frac  \eps R s^2).$$
Since $g^{-1} (s) \sim s$, we may replace the $s^2$ in the error term by $[g^{-1}(s)]^2$, and then we obtain the result by plugging $s=g(t)$.

\end{proof}

\section{Unbounded solutions on one side}

In this section we prove Theorem \ref{GM} part a) and focus on the remaining case in which we assume the solution is bounded on one side and unbounded on the other. We show that no such solutions exist in dimension $n \le 7$.

We assume throughout that $u$ is a global minimizer of $J$ which satisfies part i) of Definition \ref{bcases}, that is, 
\begin{equation}\label{bcase1}
 \{u <-1\} = \emptyset, \quad \quad \{ u>1 \} \ne \emptyset. 
 \end{equation}
 The strong maximum principle implies that in fact $u >-1.$
 Notice that the function $u$ is harmonic in the set $\{u > 1\}$, so we may also assume that 
\begin{equation}\label{bcase2} 
 \{u \ge 1\} \ne \R^n, 
 \end{equation}
 since otherwise $u$ must be constant. Denote the boundary of $\{u>1\}$ by $\Gamma$, i.e.
 $$\Gamma:=\partial\{u>1\},$$
 and notice that $\Gamma \ne \emptyset$.

\subsection{Optimal growth and energy bounds} First we show that $(u-1)^+$ must grow at most as square root of the distance to $\Gamma$. 
 
 \begin{lem}[$C^{1/2}$ estimate]\label{one-phase holder}
    Assume $u$ is a global minimizer of $J$ and \eqref{bcase1} holds. 
    There exists a constant $C$ depending only on $n$ and $W$ such that
   $$(u-1)^+ (x) \le C \left ( dist(x,\Gamma) \right)^{1/2}.$$
   In particular $$[(u-1)^+]_{C^{1/2}(\mathbb{R}^{n})}\leq C,$$
   and if $u(0) \le 1$ then 
  $$ \|u \|_{L^\infty(B_R)} \le C R^{1/2}, \quad \quad \forall R \ge 1.$$
\end{lem}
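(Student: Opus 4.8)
The plan is to prove the pointwise bound $(u-1)^{+}(x)\le C\,(\operatorname{dist}(x,\Gamma))^{1/2}$ first, and then read off the other two assertions by soft arguments. For those: since $u$ is harmonic in $\{u>1\}$, the pointwise bound together with interior gradient estimates for harmonic functions gives $|\nabla(u-1)^{+}(x)|\le C\,(\operatorname{dist}(x,\Gamma))^{-1/2}$ on $\{u>1\}$, and integrating this along segments (and using the pointwise bound again when the two endpoints are at a distance from $\Gamma$ comparable to $|x-y|$) yields the global $C^{1/2}$ seminorm bound. For the last claim, if $u(0)\le 1$ and $x\in B_{R}$ with $u(x)>1$, then the last point $z$ of the segment $[0,x]$ at which $u=1$ lies in $\Gamma$, so $\operatorname{dist}(x,\Gamma)\le|z-x|\le|x|<R$ and hence $u(x)\le 1+C\sqrt R$; together with $u>-1$ (which holds under \eqref{bcase1}) this gives $\|u\|_{L^{\infty}(B_{R})}\le CR^{1/2}$ for $R\ge 1$. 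So I reduce everything to the following: if $x_{0}\in\{u>1\}$ and $d:=\operatorname{dist}(x_{0},\Gamma)$, then $m:=u(x_{0})-1\le C\sqrt d$.

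Two preliminary observations I would use. First, $(u-1)^{+}$ is subharmonic on $\R^{n}$ (it is the positive part of a function that is harmonic on the open set $\{u>1\}$), so $m\le\sup_{\partial B_{r}(x_{0})}(u-1)^{+}$ for every $r$; combined with the interior Harnack inequality for the positive harmonic function $u-1$ on $B_{d}(x_{0})$, this gives $u-1\ge c\,m$ on $B_{d/2}(x_{0})$. Second, testing minimality against the inner variations $u_{t}:=u-t\,\eta\,(u-1)^{+}$ — which are admissible since $u_{t}\ge 1$ wherever $u_{t}\ne u$, so the potential term is unchanged — produces the Caccioppoli-type bound $\int_{B_{r}(x_{0})}|\nabla u|^{2}\le \frac{C}{r^{2}}\int_{B_{2r}(x_{0})\cap\{u>1\}}(u-1)^{2}\,dx$ for every $r>0$. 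I would also need a nondegeneracy statement for the complement near $\Gamma$ — a lower bound of the form $|\{u\le 1\}\cap B_{Kd}(x_{0})|\ge c\,d^{n}$ (or, equivalently, on the relevant capacity) — which I would obtain by a comparison argument with the subsolution barriers of Lemma \ref{rad}; this is what lets a Harnack–Poincaré chain give $\int_{B_{Kd}(x_{0})}|\nabla u|^{2}\gtrsim m^{2}d^{n-2}$.

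The heart of the proof — and the step I expect to be the main obstacle — is upgrading the above to the sharp square-root rate, and this is exactly where the one-sided hypothesis \eqref{bcase1} is indispensable: without it the one-dimensional solutions $U_{a}$ of positive slope (which grow linearly off their zero level set) would be counterexamples, and the whole point is that each such $U_{a}$ has $\{u<-1\}\ne\emptyset$, excluded here. The mechanism I would exploit is that if $m/\sqrt d$ were large, then on $B_{Kd}(x_{0})$ the solution looks like a harmonic bump of height $\sim m$ sitting over the portion of $\Gamma$ inside the ball, while on the other side of $\Gamma$ the constraint $-1<u\le 1$ forces a transition of bounded size over a region of measure $\lesssim d^{n}$. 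Comparing $u$ with the competitor obtained by flattening it toward the bounded one-dimensional profile $U_{0}$ across a suitably translated and dilated copy of $\Gamma$ near $x_{0}$ — with the domination and the controlled transition cost quantified by the radial subsolution barriers of Lemmas \ref{rad} and \ref{l210}, and the energy gained bounded below by $\int_{B_{Kd}(x_{0})}|\nabla u|^{2}\gtrsim m^{2}d^{n-2}$ — should yield a strict decrease of the energy once $m/\sqrt d\ge C$, contradicting minimality. The two delicate features are that $\Gamma$ is a priori only a topological boundary, so the argument must proceed through barriers and energy rather than a free boundary condition, and that one must extract exactly the exponent $1/2$ rather than mere sublinearity — which is precisely the ACKS scaling. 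A self-standing alternative I would keep in mind is a contradiction–compactness argument: rescaling a hypothetical sequence $(u_{k},x_{k})$ with $m_{k}/\sqrt{d_{k}}\to\infty$ by the $\mathcal F$-invariant scaling $v_{k}(x)=d_{k}^{-1/2}(u_{k}(x_{k}+d_{k}x)-1)$ and passing to the limit (the Caccioppoli bound supplying compactness) would produce a nonzero global minimizer of the ACKS functional $\mathcal F$ with a free boundary point at unit distance from the origin, contradicting the Lipschitz bound of \cite{ACKS}.
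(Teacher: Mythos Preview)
Your reductions to the pointwise bound are fine, but the core argument you propose is both more complicated than needed and, in the compactness variant, genuinely broken. The paper's proof is a short maximum-principle sliding argument using only Lemma~\ref{rad}. Take $B_R\subset\{u>1\}$ tangent to $\Gamma$ at $x_0$ and suppose $(u-1)(0)>CR^{1/2}$. Harnack for the positive harmonic function $u-1$ in $B_R$ gives $u-1\ge C'R^{1/2}$ on $B_{R/2}$. Now slide the family $V_t$, $t\in[2,R]$, of radial subsolutions from Lemma~\ref{rad} underneath $u$ on the annuli $B_{2t}\setminus B_{t/2}$: at $t=2$ one has $u>V_2$ (since $u>-1$ and $u\ge C'R^{1/2}>V_2$ on $\overline{B_1}$), and the boundary inequalities $u>-1=V_t|_{\partial B_{2t}}$, $u\ge C'R^{1/2}\ge V_t|_{\partial B_{t/2}}$ persist for all $t$, so by the strong maximum principle $u\ge V_R$ on $B_{2R}\setminus B_{R/2}$. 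But $V_R(x_0)>1=u(x_0)$, a contradiction. The exponent $1/2$ is already built into the barrier ($V_R\le CR^{1/2}$ on $\overline{B_R}\setminus B_{R/2}$), and the one-sided hypothesis enters exactly through $u>-1=V_R|_{\partial B_{2R}}$, which is what makes the sliding start. No energy comparison, Caccioppoli inequality, or compactness is used.

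Your compactness alternative has a normalization error: with $v_k(x)=d_k^{-1/2}(u_k(x_k+d_k x)-1)$ you get $v_k(0)=m_k/\sqrt{d_k}\to\infty$, so there is no finite limit at the base point and you cannot extract ``a nonzero global minimizer of $\mathcal F$ with a free boundary point at unit distance from the origin''. If you renormalize by $m_k$ instead you lose the $\mathcal F$-invariant scaling and the limit is no longer an ACKS minimizer. There is also a logical circularity risk: the identification of rescaled limits as ACKS minimizers (Theorem~\ref{gamma convergence} and Corollary~\ref{c39}) is proved \emph{after} this lemma and uses it for the $C^{1/2}$ compactness. Your energy-comparison route (``flattening toward $U_0$ across a suitably translated and dilated copy of $\Gamma$'') is too vague to evaluate; the obstacle you yourself flag, that $\Gamma$ has no a priori regularity, makes the competitor construction genuinely unclear, and none of it is needed once you use $V_R$ directly.
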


\begin{proof} Assume that $B_R \subset \{u>1\}$ is tangent to $\Gamma$ at some point $x_0$. It suffices to show that
\begin{equation}\label{1001}
(u-1) (0) \le C R^{1/2},
\end{equation} for some large $C$.

If $R \le C_0$ then this follows easily from the Euler-Lagrange equation
$$\triangle u = W'(u) \quad \Longrightarrow \quad |\triangle u| \le C \quad \mbox{in $B_{2C_0}$}.$$
 Indeed, since $u \ge -1$ and $u(x_0)=1$, Harnack inequality gives $$\|u\|_{L^\infty(B_{\frac 32C_0})} \le C.$$ Then by interior gradient estimates
 $$ \|u\|_{C^{0,1}(B_{C_0})} \le C,$$
 which together with $(u-1)(x_0)=0$ gives an upper bound $CR$ which implies the desired upper bound in \eqref{1001}.

If $R \ge C_0$, assume by contradiction that \eqref{1001} does not hold. Then by Harnack inequality applied to $u-1 \ge 0$ in $B_R$ we find 
\begin{equation}\label{1002}
 u-1 \ge C' R^{1/2}  \quad \mbox {in} \quad B_{R/2},
 \end{equation}
with $C'$ sufficiently large. We claim this inequality implies $$u \ge V_R \quad \mbox{ in} \quad  B_{2R} \setminus B_{R/2},$$ 
with $V_R$ the subsolution constructed in Lemma \ref{rad}. We reached a contradiction since $u(x_0)=1$ and $V(x_0)>1$.

The claim is a consequence of the maximum principle by comparing $u$ with the continuous family of subsolutions $V_t$, with $t \in [2,R]$, in the domains of definition of $V_t$'s. Indeed, notice that \eqref{1002} and $u > -1$ imply $u > V_t$ for $t=2$, and also $u>V_t$ on $\p (B_{2t} \setminus B_{t/2})$ for all the other values of $t$. This means the strict inequality can be extended to the interior of the domains as well.

\end{proof}

Next we use the growth of $u$ and bound the energy of $u$ in large balls $B_R$. 
\begin{lem}[Energy estimate]\label{one phase energy bound}
    Assume that $u(0) \le 1$. Then
 $$J(u,B_{R})\leq C R^{n-1} \quad \mbox{for all $R\geq1$,}  $$
 with $C$ a constant that depends on $n$ and $W$.
\end{lem}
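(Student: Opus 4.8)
The plan is to compare $u$ to the radial barrier $V_R$ from Lemma \ref{rad} on a suitable annulus and use the barrier as a competitor to estimate the energy. The key observation is that the $C^{1/2}$ estimate from Lemma \ref{one-phase holder} already controls $\|u\|_{L^\infty(B_R)} \le CR^{1/2}$, so the gradient term $\int_{B_R} |\nabla u|^2$ and the potential term $\int_{B_R} W(u)$ are the two things to bound; since $0 \le W \le \max W$ and $W$ vanishes where $|u|>1$, the potential term is automatically $\le \max W \cdot |B_R| = CR^n$, which is too weak, so the real content is (i) improving the potential bound using that $\{|u| \le 1\}$ is essentially a thin region near $\Gamma$, and (ii) bounding the Dirichlet energy by $CR^{n-1}$.

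First I would set up the comparison. Fix a large radius, say work in $B_{2R}$, and let $w$ be the competitor obtained by gluing: outside a neighborhood of $\{u>1\}$ leave $u$ alone, but inside $B_R$ replace $u$ by a function built from the radial barrier $V_R$ (or, since we need something defined on a full ball rather than an annulus, a truncation thereof) so that $w$ agrees with $u$ on $\partial B_{2R}$, satisfies $|w| \le CR^{1/2}$, and has $\int_{B_{2R}} |\nabla w|^2 + W(w) \le CR^{n-1}$. The point of $V_R$ is precisely that it transitions from the value $-1$ on $\partial B_{2R}$ up to a value $>1$ on $B_R \setminus B_{R/2}$ across a region of radial width $\sim \log R$; hence the set where $|w| \le 1$ has measure $O(R^{n-1}\log R)$ — still not quite $R^{n-1}$. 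To get the clean exponent one should instead build the competitor from the one-dimensional profile $U_0$ (or from $V_R$ rescaled so the transition layer has bounded width $O(1)$), whose transition from $-1$ to $1$ happens over an $O(1)$-width layer; then $\{|w| \le 1\}$ has measure $O(R^{n-1})$ and $\int W(w) \le CR^{n-1}$, while $|\nabla w|^2 = |g'|^2$ is bounded on the layer and zero off it, giving $\int |\nabla w|^2 \le CR^{n-1}$ as well. One must check $w$ is admissible — it is continuous and lies in $H^1$ — and that it equals $u$ near $\partial B_{2R}$ where $u \ge V_R \ge -1$ is not automatic; here I would invoke the argument already used in the proof of Lemma \ref{one-phase holder}, namely that $u \ge V_R$ on the relevant annulus by the maximum principle against the family $\{V_t\}$, so the gluing is legitimate.

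Then minimality of $u$ gives $J(u,B_{2R}) \le J(w,B_{2R}) \le CR^{n-1}$ (for $w$ and $u$ agreeing outside $B_{2R}$, the energies outside cancel), and rescaling $2R \to R$ finishes it; one handles small $R$, say $R \le C_0$, directly from the Lipschitz bound on $u$ in $B_{C_0}$ as in Lemma \ref{one-phase holder}. The main obstacle is constructing the competitor $w$ with the sharp exponent $n-1$ rather than $n-1$ times a logarithm or $n$: one has to be careful that the "width" of the transition region really is $O(1)$ and that the gluing across the free boundary $\Gamma$ (which is merely the boundary of an open set, a priori irregular) does not create extra Dirichlet energy — this is why it is cleanest to foliate $B_{2R} \setminus B_{R/2}$ by the radial barriers and define $w$ purely radially there, paying $O(R^{n-1})$, while keeping $w = u$ on all of $B_{R/2}$ where by Harnack $u$ is comparable to $R^{1/2}$ and harmonic, contributing a Dirichlet energy $\int_{B_{R/2}} |\nabla u|^2 \lesssim R^{-1}\|u\|_{L^\infty}^2 |B_{R/2}| \sim R^{n-1}$ by the interior gradient estimate for the harmonic function $u-1$. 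So in fact no modification of $u$ inside $B_{R/2}$ is needed at all: the estimate there is free, and the competitor only needs to be defined on the annulus.
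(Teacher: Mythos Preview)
Your final paragraph contains the fatal error. You claim that on $B_{R/2}$ the function $u$ is ``comparable to $R^{1/2}$ and harmonic'' and therefore $\int_{B_{R/2}}|\nabla u|^2 \lesssim R^{n-1}$ comes ``for free''. But the hypothesis of the lemma is $u(0)\le 1$: the origin lies in (or on the boundary of) the transition region $\{u\le 1\}$, so $u$ is \emph{not} harmonic on $B_{R/2}$, is \emph{not} uniformly $\sim R^{1/2}$ there, and the interior gradient estimate for harmonic functions does not apply. Bounding $J(u,B_{R/2})$ is precisely the content of the lemma --- you cannot assume it. Moreover, a competitor that equals $u$ on $B_{R/2}$ and is purely radial on the annulus would be discontinuous across $\partial B_{R/2}$ and hence not in $H^1$. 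You have confused this setup with that of Lemma~\ref{one-phase holder}, where one works inside a ball $B_R\subset\{u>1\}$.

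Your earlier ideas also miss the mark. The barrier $V_R$ of Lemma~\ref{rad} is a \emph{subsolution} used to push $u$ up from below; it is not a natural energy competitor. The profile $U_0$ has the right $O(1)$ transition width, but it saturates at $+1$ and therefore cannot dominate $u$ (which may be as large as $CR^{1/2}$) on $\partial B_{2R}$, so $\min\{u,U_0(|x|-R)\}$ fails to match the boundary data.

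The paper's argument avoids all of this with one clean idea: take the radial function
\[
v(x)=-1+C_0\min\bigl\{(|x|-R)^+,\ [(|x|-R)^+]^{1/2}\bigr\},
\]
so that $v\equiv -1$ on all of $B_R$ (hence $J(v,B_R)=0$, since $\nabla v=0$ and $W(-1)=0$), while the $\sqrt{|x|-R}$ growth together with the $C^{1/2}$ bound of Lemma~\ref{one-phase holder} forces $v>u$ on $\partial B_{2R}$. Then $\min\{u,v\}$ is an admissible competitor equal to $u$ on $\partial B_{2R}$, and with $E:=\{u\ge v\}$ one has $B_R\subset E\subset B_{2R}$ and
\[
J(u,B_R)\le J(u,E)\le J(v,E)\le J(v,B_{2R})\le CR^{n-1}.
\]
The point you are missing is that the competitor should be the \emph{constant} $-1$ on the inner ball, not $u$ itself; the entire energy of the competitor then lives in the annulus and can be computed explicitly.
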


\begin{proof}
  Define $v(x)$ as
    $$ v(x):=-1 +  C_0 \min \left\{ (|x|-R)^+,\left [ (|x|-R)^+ \right ] ^\frac 12 \right\},$$
  and let $E$ denote the set
    $$ E:=\{u \ge v \}.$$
   By Lemma \ref{one-phase holder} we know that
   $$ B_{R}\subseteq E\subseteq B_{2R},$$
   provided that $C_0$ is chosen sufficiently large. We take $\min\{u,v\}$ as a competitor for $u$ in $B_{2R}$ and obtain
   $$ J(u,E) \le J(v,E) \quad \Longrightarrow \quad J(u,B_R) \le J(v,B_{2R}).$$
  We obtain the desired bound since $$J(v,B_{2R}) \le C R^{n-1}.$$
  
\end{proof}

A consequence of the energy bound is that the measure of the set $\{|u|<s\}$, for some fixed $s \in (0,1)$, grows at most like $R^{n-1}$ in $B_R$:
\begin{equation}\label{ules}
|\{|u|\leq s\}\cap B_{R}| \leq C(s) R^{n-1} \quad \quad  \forall R  \geq1,
\end{equation}
with $C(s)$ a constant depending also on $s$.

 \subsection{Density estimates}
 
 Next we derive density estimates for the sets $\{u<0\}$ and $\{u>0\}$.
 
 \begin{lem}[Density estimates]\label{density estimate}
  Assume that $u(0)=0$. Then for all $R \ge C$,  
    \begin{equation*}
        |B_{R}\cap\{u\geq0\}|\geq\delta R^{n},\quad|B_{R}\cap\{u\leq0\}|\geq\delta R^{n}.
    \end{equation*}
    for some small constant $\delta>0$ that depends only on $n$ and $W$.
\end{lem}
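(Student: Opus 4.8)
The plan is to prove the two density bounds separately, treating the sets $\{u\geq 0\}$ and $\{u\leq 0\}$ as asymmetric cases since $u$ is bounded below but not above. For the set $\{u\geq 0\}$, I would argue by contradiction using the minimality of $u$ together with an isoperimetric-type competitor: if $|B_R\cap\{u\geq 0\}|$ were too small, I would replace $u$ inside $B_R$ by a competitor close to $-1$ (say $\min\{u,-1+\text{small bump}\}$, or simply a suitable truncation toward the bottom of the well), gaining a definite amount of potential energy $W$ in the region where $u$ was near or above $0$ while paying only a controlled Dirichlet cost proportional to $\partial(B_R\cap\{u\geq 0\})$. Since $u(0)=0$, a barrier/Harnack argument forces $\{u\geq 0\}$ (in fact $\{u\geq -1/2\}$) to occupy a nontrivial portion of $B_r$ for every $r$ up to $R$, and then one runs the standard De Giorgi-type iteration on the quantity $\phi(r):=|B_r\cap\{u\geq 0\}|$ (or its complement): minimality gives a differential inequality of the form $\phi(r)\leq C(\phi'(r))^{n/(n-1)}$ once $\phi(r)$ is below a small threshold, which is incompatible with $\phi$ being positive and bounded below on a whole interval. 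This yields $|B_R\cap\{u\geq 0\}|\geq \delta R^n$.

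For the set $\{u\leq 0\}$, the same truncation-from-below trick does not directly apply, so instead I would exploit the $C^{1/2}$ growth estimate (Lemma 2.9) and the energy bound (Lemma 2.11). The key point is that if $\{u\leq 0\}\cap B_R$ were very small, then $u>0$ on most of $B_R$, and combined with $u(0)=0$ and interior estimates one produces a region where $u$ is bounded away from $0$; replacing $u$ by a competitor that is pushed up toward $+1$ there (again $\max\{u, 1-\text{bump}\}$ type) gains potential energy $W(u)$ while the Dirichlet cost is again controlled by the perimeter of the small bad set. Alternatively — and this may be cleaner — one observes that near a point of $\{u=0\}$ the two truncations $\max\{u,0\}$ and $-\min\{u,0\}$ are both relevant, and the standard clean-ball / density argument for minimizers of Allen–Cahn (as in Caffarelli–Córdoba) applies verbatim once we know $u$ stays in a bounded range on the relevant scale, which here follows because $u(0)=0$ localizes us to where $|u|<1$ has positive density. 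The monotonicity $\phi(r)\leq C\,r^{n-1}$ from \eqref{ules} is used to ensure the iteration terminates.

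I would carry out the steps in this order: (1) use $u(0)=0$, Harnack, and the radial subsolution $\Phi$ from Lemma 2.12 to show that neither $\{u> -1/2\}$ nor $\{u<1/2\}$ can be entirely squeezed out of a large ball — i.e., establish a weak non-degeneracy at the origin's scale; (2) set up the De Giorgi iteration for $\phi(r)=|B_r\cap\{u\leq 0\}|$ (resp.\ $\{u\geq 0\}$), deriving from minimality the inequality comparing $\phi(r)$ with a power of $-\phi'(r)$ via the appropriate truncated competitor and the coarea formula; (3) integrate the differential inequality to conclude $\phi(R)\geq\delta R^n$. The main obstacle I expect is step (2) in the $\{u\leq 0\}$ case: unlike the bounded-below side, pushing $u$ upward toward $+1$ costs energy in the already-harmonic region $\{u>1\}$ if the bad set reaches that far, so one must localize the competitor modification to the strip $\{|u|<1\}$ and carefully check that the gain in $W$ dominates the gradient cost — this is where the quadratic non-degeneracy of $W$ near $\pm 1$ and the choice of the cutoff scale matter, and it is the delicate point to get right.
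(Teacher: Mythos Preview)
You have the two cases exactly reversed. The density of $\{u\le 0\}$ is the \emph{easy} side: since $u>-1$ everywhere, the classical Caffarelli--C\'ordoba argument applies verbatim with the competitor $\max\{u,v\}$, $v$ close to $+1$ inside and equal to $-1$ on $\partial B_R$. Where $u>1\ge v$ this competitor equals $u$, so the harmonic region $\{u>1\}$ is left untouched and your worry about ``pushing $u$ upward costs energy in the already-harmonic region'' never materializes. No localization to the strip $\{|u|<1\}$ is needed.

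The genuine difficulty is the density of $\{u\ge 0\}$, which you treat as routine. Your competitor $\min\{u,v\}$ with $v$ close to $-1$ must satisfy $v\ge u$ on the outer boundary in order to match the Dirichlet data, but by Lemma~\ref{one-phase holder} $u$ can be as large as $C R^{1/2}$ there; a ``suitable truncation toward the bottom of the well'' cannot absorb this, and the De~Giorgi ODE $\phi(r)\le C(\phi'(r))^{n/(n-1)}$ you propose does not follow from minimality here (the energy gain is governed by $\int W(u)$, not by the volume $\phi(r)$, and these differ where $u$ is close to $-1$). The paper resolves this with a dichotomy along a discrete sequence $R_k$: either $\{u\ge 1\}$ already has density $\ge\delta$ in the annulus $B_{3R_k}\setminus B_{R_k}$ (and then $V(3R_k)\ge\delta(3R_k)^n$ trivially), or its density there is $<\delta$, in which case the subharmonicity of $(u-1)^+$ and the mean value inequality force $(u-1)^+\le C\delta\sqrt{r}$ on $\partial B_{2r}$, so that a competitor with an affine tail $1+C\delta(|x|-r)/\sqrt{r}$ does match the boundary data with Dirichlet cost $O(\delta^2 r^{n-1})$. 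The iteration is then carried on the combined quantity $\omega(r)=|\{u\ge 0\}\cap B_r|+\int_{B_r}W(u)$, not on the volume alone. This boundary-matching dichotomy is the missing idea in your outline.
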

 
 \begin{proof} The proof of the density estimates in the bounded case $|u|<1$ in $\R^n$ is due to Caffarelli-Cordoba in \cite{CC} (see also \cite{S2,DFV}). The strategy is to compare the energy of $u$ 
 with that of the energy of an explicit function $v$ and then derive a discrete differential inequality involving the ``volume" $V(R)$ and ``area" $A(R)$ type quantities associated with $u$ defined as
 $$ V(R):=|\{ u \ge 0\} \cap B_R|, \quad \mbox{and} \quad A(R)= \int_{B_{R}}W(u)dx.$$

 We remark that since $u>-1$, the second density estimate for the sub-level set $\{u \le 0\}$ follows exactly as in \cite{CC}. It remains to prove the first density estimate, and the main difference with respect to the arguments in \cite{CC} is that now $u$ is not bounded above by 1. 
 
 Denote by
 $$ \omega(R):= V(R) + A(R),$$
and it suffices to show that 
 $$ \omega(R) \ge \delta R^n \quad \mbox{for a sequence $R=R_k \to \infty$, with $R_0 \le C$,} \quad R_{k+1}/R_k \le C,$$
 with $\delta$ small, and $C$ large, appropriate universal constants. Then the density estimate follows since $A(R) \le C R^{n-1}$ by Lemma \ref{one phase energy bound}.
  
We prove the claim by constructing the sequence $R_k$ inductively. 

By Lemma \ref{one-phase holder}, 
$$\|u\|_{L^\infty(B_2)} \le C, \quad \mbox{and} \quad |\triangle u| \le C \quad \quad \Longrightarrow \quad \|\nabla u\|_{L^\infty(B_1)}\leq C.$$ Since $u(0)=0$ this means that $A(1) \ge c_1$ for some small constant $c_1$, hence $\omega(1)\geq c_{1}$. Let $R_0=5T$ where $T>1$ is a fixed large constant to be specified later, and then we have     \begin{equation}\label{density assumption 1}
 \omega(R_{0})\geq\delta R_{0}^{n} \quad \mbox{provided that} \quad  \delta\leq c_{1}(5T)^{-n}.
    \end{equation}
 
 Now suppose that $A(R_{k})\geq\delta R_{k}^{n}$ for some $k\geq0$, and it suffices to show that there exists $R_{k+1}\leq 3 R_{k}$ so that 
 $$\omega(R_{k+1})\geq\delta R_{k+1}^{n}.$$ We distinguish two cases. 
 
 \smallskip
 {\it Case 1:}  In the annulus $$\mathbb{A}_{k}:=B_{3R_{k}}\setminus B_{R_{k}},$$ we have $$|\mathbb{A}_{k}\cap\{u\geq1\}|\geq\delta|\mathbb{A}_{k}|.$$ 
Then we choose $R_{k+1}=3R_{k}$. Clearly the contribution to the $V(R)$ term in the annulus $\mathbb{A}_{k}$ gives $\omega(R_{k+1})\geq\delta R_{k+1}^{n}$, which is the desired conclusion. 

\smallskip
{\it Case 2:} Assume that
\begin{equation}\label{case2}
|\mathbb{A}_{k}\cap\{u\geq1\}|<\delta|\mathbb{A}_{k}|.
\end{equation}
We choose $R_{k+1}:=R_{k}+T\le  \frac 65 R_{k}$ and denote for simplicity of notation 
$$r:=R_{k+1}.$$
It remains to show $\omega(r)\geq\delta r^{n}$.
   Since $(u-1)^{+}$ is subharmonic, we apply the mean value inequality in balls of radius $r/2$ centered at points on $\p B_{2r}$ and use \eqref{case2} to conclude that
    \begin{equation*}
        \|(u-1)^+\|_{L^{\infty}(\partial B_{2r})} \leq C \delta \|(u-1)^+\|_{L^{\infty}(\mathbb{A}_{k})} \leq C_{2} \delta \sqrt{r},
    \end{equation*}
    where in the last inequality we have used Lemma \ref{one-phase holder}.
    
   We define $v(x)$ in $B_{2r}$ as
    \begin{equation*}
        v(x)=\left\{\begin{aligned}
            -1+2e^{|x|-r},& \quad \quad \mbox{ if }|x|\leq r,\\
            1+C_{2}\delta \frac{|x|-r}{\sqrt{r}},& \quad \quad \mbox{ if }r\leq|x|\leq2r.
        \end{aligned}\right.
    \end{equation*}
    Notice that $v$ is Lipschitz and
   
    \noindent
    a)  $v \le 1$ in $B_r$, and $v \ge 1$ in $B_{2r} \setminus B_r$, 
   
     \noindent
    b) $v\geq u$ on $\partial B_{2r}$,
   
    \noindent
    c)
      \begin{equation}\label{J(v) outside}
         J(v,B_{2r}\setminus B_{r})\leq C\delta^2 r^{n-1},\quad J(v,B_{r-T})\leq Ce^{-2T}r^{n-1}.
    \end{equation}
    Let $E$ denote the set
    $$E:= \{u>v\} \cap B_{2r},$$
    and by minimality of $u$ we have 
    \begin{equation}\label{in1}
    J(u, \overline E) \le J(v, \overline E).
    \end{equation}
    Next we recall the classical inequality of Modica-Mortola for the energy $J(w, \overline E)$ of an arbitrary Lipschitz function $w$, and the perimeter of the intermediate level sets of $w$, which follows from the Cauchy-Schwartz inequality and the co-area formula:
     \begin{equation}\label{in2}
    J(w,\overline E) \ge \int_{\overline E} \sqrt{2W(u)} \, |\nabla u| dx \ge \int_{-1}^1 \mathcal H^{n-1}(\overline E \cap \{w=s\}) \sqrt{2 W(s)} ds.
    \end{equation} 
  We use this inequality for the functions $u$ and $v$. For $s \in [-1,1]$ we define
    $$E_{s}:=\{u>s>v\} \cap B_{2r}, $$ 
    and then
    \begin{equation*}
        \partial E_{s} \subset \partial_{u}E_{s}\cup\partial_{v}E_{s},
    \end{equation*}
    where $$\partial_{u}E_{s}:= \overline E\cap\{u=s\} \quad \mbox{ and} \quad  \partial_{v}E_{s}:=\overline E\cap\{v=s\}.$$ We multiply by $\sqrt{2 W(s)}$ the isoperimetric inequality for the sets $E_s$
    \begin{equation*}
        |E_{s}|^{\frac{n-1}{n}}\leq C \, [\mathcal{H}^{n-1}(\partial_{u}E_{s})+\mathcal{H}^{n-1}(\partial_{v}E_{s})],
    \end{equation*}
   and then integrate in $s$, which together with \eqref{in1}-\eqref{in2} gives
  $$ \int_{-1}^{1}|E_{s}|^{\frac{n-1}{n}}\sqrt{2W(s)}ds\leq C[J(u,\overline E)+J(v, \overline E)]\leq 2C J(v,\overline E).$$
   Notice that when $s$ belongs to the interval $(-1+2 e^{-T}, 0)$ then $$B_{r-T} \subset \{v<s\}, \quad \mbox{and} \quad \{u \ge 0\}  \subset \{s < u\},$$ hence
   $$B_{r-T} \cap \{u \ge 0\} \subset E_s,$$
   which means $$ V(r-T) \le |E_s|.$$ 
  Then the integral inequality above implies
  $$ (V(r-T) )^\frac{n-1}{n} \le C J(v, \overline E).$$
  On the other hand using that $W$ is increasing in the interval $[-1, -1+2 e^{-T}]$ we have
  \begin{align*}
  A(r-T) =\int_{B_{r-T}} W(u) dx  & \le  J(u, \overline E) + \int_{B_{r-T}\setminus E} W(v) dx \\
  & \le J(v, \overline E) + J(v, B_{r-T}) \\
 &  \le J(v, \overline E) + C e^{-2T}r^{n-1}.
  \end{align*}
   We combine the last two inequalities and use that $A(r-T) \ge A(1) \ge c_1$ is bounded below to conclude that
   \begin{equation}\label{om1}
   (\omega(r-T)) ^\frac{n-1}{n} \le C J(v, \overline E) + C e^{-2T}r^{n-1}.
   \end{equation}
   Now we estimate $J(v, \overline E)$, and by \eqref{J(v) outside} we have
    \begin{equation}\label{om2}
        J(v,\overline E)\leq J(v, \overline E\cap(B_{r}\setminus B_{r-T}))+C r^{n-1}(\delta^2 +e^{-2T}),
    \end{equation}
    In the set $\overline E\cap(B_{r}\setminus B_{r-T}) $ we use that 
    
    \noindent
    a) $u\ge v$,
   
       \noindent
b) $W$ is increasing near $-1$,
   
    \noindent
    c) $W(v)\sim (1+v)^2 \sim |\nabla v|^2$ if $v \le 0$,
    
    \noindent 
    and find
    $$|\nabla v|^{2} + W(v) \leq C(W(v)+\chi_{\{v\geq0\}}) \le C' (W(u)+\chi_{\{u\geq0\}}).$$ Therefore
    \begin{equation*}
        J(v, \overline E\cap(B_{r}\setminus B_{r-T}))\leq C[\omega(r)-\omega(r-T)],
    \end{equation*}
   which combined with \eqref{om1}-\eqref{om2} gives
    \begin{equation*}
       c (\omega(r-T))^{\frac{n-1}{n}} -C r^{n-1}(\delta^2+e^{-2T}) \le \omega(r)-\omega(r-T) ,
    \end{equation*}
    for universal constants $c$ small and $C$ large.
  Using that $\omega(r-T)$ satisfies the induction hypothesis we obtained the desired inequality $$ \delta r^n  \le \omega(r),$$ provided that     
   \begin{equation}\label{density assumption 2}
     \delta T \le   c \delta^{\frac{n-1}{n}}-C (\delta^2 + e^{-2T}).
    \end{equation}
   Finally we remark that indeed it is possible to choose constants $\delta$ and $T$ so that both requirements \eqref{density assumption 1} and \eqref{density assumption 2} are satisfied. For this we first take $\delta^\frac 1n T$ to be sufficiently small universal, and then choose $T$ sufficiently large.
   
   \end{proof}

\begin{rem} \label{rema}The density estimates remain valid under the assumption that $u(0)$ belongs to a compact interval of $(-1,1)$ provided that $R$ is sufficiently large. More precisely, if we assume that $$|u(0)|<s, \quad \mbox{for some $s<1$,}$$ then the proof above shows that $$|\{u \ge u(0) \} \cap B_R| \ge \delta R^n, \quad \mbox{and} \quad  |\{u \le u(0) \}\cap B_R| \ge \delta R^n,$$ if $R \ge C$, for some constants $\delta$ and $C$ that depend also on $s$. It is not difficult to see that, in view of \eqref{ules}, the constant $\delta$ can be actually chosen universal, independent of $s$.

\end{rem}

\subsection{$\Gamma$ convergence} 

Next we show that proper rescalings of a global minimizer $u$ of $J$ converge to a minimizer of the ACKS functional. We rescale $u$ in a large ball $B_R$ differently according to the regions where $|u|<1$ and $|u| \ge 1$ and create a pair $(v, \mathcal E)$ defined in $B_1$ as follows
\begin{equation}\label{rp}
 (v, \mathcal E):=\left(R^{- \frac 12} \cdot (u-1)^+(Rx), \min \{u(Rx),1\} \right).
 \end{equation}
Notice that the energy of $u$ in $B_R$ can be expressed in terms of the pair $(v, \mathcal E)$ as
$$ J (u, B_R)=R^{n-1} \int_{B_1} \frac{|\nabla v|^{2}}{2}+  \frac{|\nabla\mathcal{E}|^{2}}{2R}+ R \,  W(\mathcal{E}) \, \, dx. $$

\begin{defn}
    We say a pair of functions $(v,\mathcal{E})$ is admissible in an open set $\Omega$, denoted as $(v,\mathcal{E})\in\mathcal{A}(\Omega)$, if
    \begin{itemize}
        \item[(1)]  $v, \mathcal E \in H^{1}(\Omega)$,

        \item[(2)]   $v \geq0$ and $-1\leq\mathcal{E}\leq1$,
        
          \item[(3)] $\{v>0\} \subset \{ \mathcal E=1\}$.
    \end{itemize}
\end{defn}
\begin{defn}
    If $(v,\mathcal{E})\in\mathcal{A}(\Omega)$ and $\eps>0$, we define the functional
    \begin{equation*}
        J_{\eps}(v,\mathcal{E},\Omega):=\int_{\Omega}\frac{|\nabla v|^{2}}{2} + \eps \, \frac{|\nabla\mathcal{E}|^{2}}{2}+ \frac 1 \eps W(\mathcal{E})\, dx.
    \end{equation*}
 \end{defn}
Clearly, if $u$ is a minimizer in $B_R$ then the corresponding pair $(v, \mathcal E)$ minimizes $J_\eps$ in $B_1$ with $\eps=R^{-1}$. Conversely, given a minimizing pair $(v,\mathcal E)$ in $B_1$ we can find a corresponding minimizer $u$ of $J$ in $B_R$ with $R=\eps^{-1}$.

 We establish the Gamma-converge of the functionals $J_\eps$ as $\eps \to 0$, to the ACKS functional
  \begin{equation*}
        I(v,E,\Omega):=\int_{\Omega}\frac{|\nabla v|^{2}}{2}dx+c_0 \cdot Per_\Omega(E), 
    \end{equation*}
 with $c_0$ the constant 
 $$c_0:= \int_{-1}^1 \sqrt{2W(s)} \, ds.$$
 Notice that after multiplying $v$ by a constant this functional coincides with a multiple of the energy $\mathcal F$ discussed in Section 2. We recall that the functional $I$ acts on the space of admissible pairs $(v,E) \in \mathcal A_0(\Omega)$ consisting of functions $v \ge 0$ and measurable sets $E \subset \Omega$ which have the property that $v=0$ a.e. on $E$:
$$ \mathcal A_0(\Omega):=\{(v,E)| \quad v \in H^1(\Omega),\quad \mbox{$E$ Caccioppoli set, $v \ge 0$ in $\Omega$, $v=0$ a.e. in $E$} \}.$$

We state the precise results.   

\begin{thm}[$\Gamma$-convergence] \label{TGC}As $\eps \to 0$ the functionals $J_\eps$ $\Gamma$-converge to $I$ in the following sense:

a) (lower semicontinuity) if $\eps_k \to 0$ and 
\begin{equation}\label{con1}
v_k \to v \quad \mbox{ in} \quad L^2(\Omega), \quad \quad {\mathcal E}_k \to \chi_{E^{c}}-\chi_{E} \quad \mbox{ in} \quad L^1(\Omega),
\end{equation}
then
$$ \liminf J_{\eps_k}(v_k, \mathcal E _k, \Omega) \ge I(v,E,\Omega).$$

b) (approximation) given $(v,E) \in \mathcal A_0(\Omega)$ with $u$ continuous in $\overline \Omega$, there exists a sequence $(v_k, \mathcal E_k) \in \mathcal A(\Omega)$ such that \eqref{con1} holds and
$$J_{\eps_k}(v_k, \mathcal E _k, \Omega) \to I(v,E,\Omega).$$
\end{thm}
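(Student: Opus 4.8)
The plan is to prove the two halves of the $\Gamma$-convergence statement separately, following the classical Modica–Mortola scheme adapted to the mixed Dirichlet/perimeter setting. The decoupling $(v,\mathcal E)=(R^{-1/2}(u-1)^+(R\,\cdot),\min\{u(R\,\cdot),1\})$ is designed precisely so that the $v$-part carries the Dirichlet energy of the overshoot region $\{u>1\}$ while the $\mathcal E$-part carries the phase-transition energy; the constraint $\{v>0\}\subset\{\mathcal E=1\}$ is what glues them. The key point is that the cross term $\eps\,|\nabla\mathcal E|^2/2$ and the potential term $\eps^{-1}W(\mathcal E)$ interact only through $\mathcal E$, and on the region where $v>0$ one has $\mathcal E\equiv 1$ so $W(\mathcal E)=0$ there; thus the $v$ energy and the Modica–Mortola energy of $\mathcal E$ essentially do not see each other, and the two parts of the limit $I$ add up cleanly.

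For part a) (lower semicontinuity), I would argue: the Dirichlet part is handled by weak lower semicontinuity of $\int |\nabla v|^2$ under $L^2$ convergence (passing to a subsequence so that $\nabla v_k \rightharpoonup \nabla v$ weakly in $L^2$, which is legitimate since the energies are bounded, otherwise the $\liminf$ is $+\infty$ and there is nothing to prove). For the perimeter part I would use the standard Modica–Mortola lower bound: set $\Phi(t):=\int_{-1}^{t}\sqrt{2W(s)}\,ds$, so $\Phi(1)-\Phi(-1)=c_0$, and observe via the AM–GM inequality $\eps|\nabla\mathcal E_k|^2/2 + \eps^{-1}W(\mathcal E_k)\ge \sqrt{2W(\mathcal E_k)}|\nabla\mathcal E_k| = |\nabla(\Phi\circ\mathcal E_k)|$ that $J_{\eps_k}(v_k,\mathcal E_k,\Omega)\ge \int_\Omega |\nabla v_k|^2/2 + \int_\Omega |\nabla(\Phi\circ\mathcal E_k)|$. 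Since $\mathcal E_k\to\chi_{E^c}-\chi_E$ in $L^1$ and $\Phi$ is Lipschitz, $\Phi\circ\mathcal E_k\to \Phi(1)\chi_{E^c}+\Phi(-1)\chi_E$ in $L^1$, so by lower semicontinuity of total variation $\liminf \int_\Omega|\nabla(\Phi\circ\mathcal E_k)|\ge (\Phi(1)-\Phi(-1))\,Per_\Omega(E)=c_0\,Per_\Omega(E)$. Adding the two lower bounds gives $\liminf J_{\eps_k}\ge I(v,E,\Omega)$. One subtlety to address is that a priori $v$ and $E$ need not satisfy the admissibility relation $v=0$ a.e.\ on $E$; but this follows in the limit from $\{v_k>0\}\subset\{\mathcal E_k=1\}$ together with the two modes of convergence, since $\mathcal E_k\to -1$ a.e.\ on $E$ forces $v_k\to 0$ a.e.\ on $E$.

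For part b) (recovery sequence), given $(v,E)\in\mathcal A_0(\Omega)$ with $v$ continuous on $\overline\Omega$, the idea is the usual one: build $\mathcal E_k$ as a one-dimensional optimal profile across $\partial E$ glued by the signed distance function $d(x)=\mathrm{dist}(x,E)-\mathrm{dist}(x,E^c)$, i.e.\ $\mathcal E_k(x)=\psi(d(x)/\eps_k)$ where $\psi$ interpolates between $-1$ and $1$ along the ODE profile $G_0$, so that $\int_\Omega \eps_k|\nabla\mathcal E_k|^2/2 + \eps_k^{-1}W(\mathcal E_k)\to c_0\,Per_\Omega(E)$ by the co-area formula (first for smooth $E$, then by a density/diagonal argument for general Caccioppoli $E$). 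For the Dirichlet part one keeps $v_k$ essentially equal to $v$ but must modify it near $\partial E$ to enforce $\{v_k>0\}\subset\{\mathcal E_k=1\}$: since $\mathcal E_k=1$ only on $\{d\ge c\eps_k\}$, replace $v$ by $v_k:=(v-\lambda_k)^+$ with $\lambda_k\to 0$ chosen so that $\{v>\lambda_k\}$ lies in the region where $\mathcal E_k=1$ — this works because $v$ is continuous and vanishes on $E$, so $\{v\ge\lambda_k\}$ is a compact subset of $\{d>0\}$ retreating into $E^c$, and one can further cut off $v_k$ to vanish in an $O(\eps_k)$-neighborhood of $\partial E$ at the cost of Dirichlet energy tending to $0$ (again using continuity of $v$ and that $|\{0<v<\lambda_k\}|$ plus the thin neighborhood have small measure, controlling $\int|\nabla v_k|^2$ via, say, a Lipschitz truncation or the fact that $v\in H^1$ with $\int_{\{0<v<\lambda_k\}}|\nabla v|^2\to 0$). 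Then $J_{\eps_k}(v_k,\mathcal E_k,\Omega)\to \int_\Omega|\nabla v|^2/2 + c_0\,Per_\Omega(E) = I(v,E,\Omega)$, and \eqref{con1} holds by construction.

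The main obstacle I expect is in part b): reconciling the Dirichlet competitor $v$ with the hard constraint $\{v>0\}\subset\{\mathcal E_k=1\}$ near $\partial E$ without losing Dirichlet energy in the limit, in the generality where $E$ is merely a set of finite perimeter and $\partial E$ is only rectifiable. The continuity of $v$ on $\overline\Omega$ (which is hypothesized) together with $v=0$ on $E$ is exactly what rescues this — it guarantees $v$ is uniformly small in a fixed neighborhood of $E$, so the subtraction of $\lambda_k$ and the $O(\eps_k)$-collar cutoff cost only $o(1)$ energy — but making the estimate on $\int|\nabla v_k|^2$ rigorous requires a careful choice of the truncation level $\lambda_k$ relative to $\eps_k$ and an absolute-continuity argument for $\int_{\{0<v<\lambda\}}|\nabla v|^2\to 0$ as $\lambda\to0$. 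The perimeter recovery for non-smooth $E$ is standard (approximate $E$ in $L^1$ by smooth sets with converging perimeters, run the profile construction for each, and diagonalize), so I would treat it briefly and concentrate the write-up on the coupling near the free boundary.
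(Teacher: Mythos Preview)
Your proposal is correct and follows essentially the same route as the paper. For part a) the paper uses exactly the Modica--Mortola lower bound via the primitive $H(t)=\int_{-1}^t\sqrt{2W}$ (your $\Phi$) together with lower semicontinuity of the Dirichlet integral and of the BV norm; for part b) the paper likewise approximates $E$ by a smooth set $F$ contained in $\{v<\delta\}$ (citing Modica's approximation lemma), replaces $v$ by $(v-2\delta)^+$ so that it vanishes in a fixed neighborhood of $F$, and builds $\mathcal E_k$ from the one-dimensional profile composed with the signed distance to $\partial F$ --- exactly your truncation-plus-profile scheme, organized as a fixed-$\delta$ step followed by $\delta\to 0$ rather than coupling $\lambda_k$ to $\eps_k$.
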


The next theorem establishes the compactness of minimizers.

\begin{thm}[compactness]\label{gamma convergence}
    Assume that $(v_k,\mathcal{E}_k)\in\mathcal{A}(B_{1})$ are minimizers of $J_{\eps_k}(\cdot,B_{1})$, such that $$J_{\eps_k}(v_k,\mathcal{E}_k,B_{1}) + \|v_k\|_{L^{2}(B_{1})} \le M $$ for some fixed $M>0$. Then there exists a pair $(v,E)\in\mathcal{A}_0(B_{1})$ such that up to subsequences
    $$ v_k \to v \quad \mbox{in} \quad C^\alpha_{loc}(B_1), \quad \quad \mathcal E_k \to \chi_{E^{c}}-\chi_{E} \quad \mbox{in} \quad L^1_{loc}(B_1),$$
    and $(v,E)$ minimizes the ACKS functional $I(\cdot,B_1)$.
\end{thm}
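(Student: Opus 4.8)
The plan is the standard scheme for transferring minimality to a $\Gamma$-limit. After passing to a subsequence (and assuming $\eps_k\to 0$), I would first produce compactness. A minimizing pair $(v_k,\mathcal E_k)$ for $J_{\eps_k}$ in $B_1$ is the rescaling of a minimizer $u_k$ of $J$ in $B_{R_k}$, $R_k=\eps_k^{-1}$, with $v_k=R_k^{-1/2}(u_k-1)^+(R_k\,\cdot)$ and $\mathcal E_k=\min\{u_k(R_k\,\cdot),1\}$. The barrier comparison behind Lemma \ref{one-phase holder} is local — it compares $u_k$ only with the radial subsolutions of Lemma \ref{rad} in annuli $B_{2t}\setminus B_{t/2}$ — so it applies in $B_{R_k/4}$, and after rescaling, together with $\|v_k\|_{L^2(B_1)}\le M$ and the subharmonicity of $v_k$, it yields uniform interior estimates $\|v_k\|_{C^{1/2}(B_\rho)}\le C(\rho)$ for every $\rho<1$; a subsequence then converges in $C^\alpha_{loc}(B_1)$ to a continuous $v\ge 0$. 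For $\mathcal E_k$, put $\phi(t):=\int_0^t\sqrt{2W(s)}\,ds$, a bounded nondecreasing function with $\phi(1)-\phi(-1)=c_0$; by Cauchy--Schwarz, $\int_{B_1}|\nabla\phi(\mathcal E_k)|=\int_{B_1}\sqrt{2W(\mathcal E_k)}\,|\nabla\mathcal E_k|\le J_{\eps_k}(v_k,\mathcal E_k,B_1)\le M$, so $\phi(\mathcal E_k)$ is bounded in $BV_{loc}(B_1)$ and, along a further subsequence, converges in $L^1_{loc}$ and a.e.; since $\int_{B_1}W(\mathcal E_k)\le\eps_k M\to 0$, the a.e.\ limit of $\mathcal E_k$ takes only the values $\pm1$, hence $\mathcal E_k\to\chi_{E^c}-\chi_E$ in $L^1_{loc}(B_1)$ with $E$ a Caccioppoli set.

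Next I would check $(v,E)\in\mathcal A_0(B_1)$: if $v(x_0)>0$, then $v_k>0$ on a ball $N\ni x_0$ for all large $k$, so admissibility condition (3) forces $\mathcal E_k\equiv 1$ a.e.\ on $N$, whence $N\subset E^c$ up to a null set; since $\{v>0\}$ is open, $v=0$ a.e.\ on $E$. The energy lower bound is then immediate: for any $\rho<1$, $v_k\to v$ in $L^2(B_\rho)$ and $\mathcal E_k\to\chi_{E^c}-\chi_E$ in $L^1(B_\rho)$, so Theorem \ref{TGC}(a) gives $I(v,E,B_\rho)\le\liminf_k J_{\eps_k}(v_k,\mathcal E_k,B_\rho)\le\liminf_k J_{\eps_k}(v_k,\mathcal E_k,B_1)$, and letting $\rho\to 1$, $I(v,E,B_1)\le\liminf_k J_{\eps_k}(v_k,\mathcal E_k,B_1)$.

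It remains to prove minimality. Let $(\bar v,\bar E)\in\mathcal A_0(B_1)$ coincide with $(v,E)$ in $B_1\setminus B_{\rho_0}$ for a good radius $\rho_0<1$ with $|D\chi_E|(\p B_{\rho_0})=0$; by subtracting the common exterior energy it suffices to show $I(v,E,B_{\rho_0})\le I(\bar v,\bar E,B_{\rho_0})$ (one may take $\bar v$ continuous, which is enough to test minimality and is required to apply Theorem \ref{TGC}(b)). Fix $\rho\in(\rho_0,1)$ and take, via Theorem \ref{TGC}(b), a recovery sequence $(\tilde v_k,\tilde{\mathcal E}_k)\in\mathcal A(B_\rho)$ with $J_{\eps_k}(\tilde v_k,\tilde{\mathcal E}_k,B_\rho)\to I(\bar v,\bar E,B_\rho)$ and $\tilde v_k\to v$, $\tilde{\mathcal E}_k\to\chi_{E^c}-\chi_E$ on the annulus $B_\rho\setminus B_{\rho_0}$ (where $(\bar v,\bar E)=(v,E)$). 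Pigeonholing over $\sim\delta^{-1}$ disjoint spherical shells of this annulus, I would select one shell $S_k=B_{r_k}\setminus B_{r_k-\delta}$, $r_k\in(\rho_0,\rho)$, on which $\int_{S_k}(|\nabla v_k|^2+|\nabla\tilde v_k|^2)\le C\delta$ and the analogous $\mathcal E$-energies are $\le C\delta$, and then define $(\bar v_k,\bar{\mathcal E}_k)$ to equal $(\tilde v_k,\tilde{\mathcal E}_k)$ inside $B_{r_k-\delta}$, $(v_k,\mathcal E_k)$ outside $B_{r_k}$, and to interpolate across $S_k$. This gives $J_{\eps_k}(\bar v_k,\bar{\mathcal E}_k,B_1)\le J_{\eps_k}(\tilde v_k,\tilde{\mathcal E}_k,B_\rho)+g_k(\delta)+J_{\eps_k}(v_k,\mathcal E_k,B_1\setminus B_{r_k})$, with $g_k(\delta):=J_{\eps_k}(\bar v_k,\bar{\mathcal E}_k,S_k)$ the gluing cost, and inserting into the minimality inequality $J_{\eps_k}(v_k,\mathcal E_k,B_1)\le J_{\eps_k}(\bar v_k,\bar{\mathcal E}_k,B_1)$ the exterior term cancels, leaving $J_{\eps_k}(v_k,\mathcal E_k,B_{\rho_0})\le J_{\eps_k}(v_k,\mathcal E_k,B_{r_k})\le J_{\eps_k}(\tilde v_k,\tilde{\mathcal E}_k,B_\rho)+g_k(\delta)$. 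Sending $k\to\infty$ (using the lower bound on $B_{\rho_0}$ and $J_{\eps_k}(\tilde v_k,\tilde{\mathcal E}_k,B_\rho)\to I(\bar v,\bar E,B_\rho)=I(\bar v,\bar E,B_{\rho_0})+I(v,E,B_\rho\setminus B_{\rho_0})$), then $\delta\to 0$, then $\rho\to\rho_0$ (so that $I(v,E,B_\rho\setminus B_{\rho_0})\to 0$), I obtain $I(v,E,B_{\rho_0})\le I(\bar v,\bar E,B_{\rho_0})$ — provided $\limsup_k g_k(\delta)\to 0$ as $\delta\to 0$.

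The last point is the main obstacle. For the Dirichlet part, the interpolation $\bar v_k=(1-\zeta_k)\tilde v_k+\zeta_k v_k$ across $S_k$ (with $|\nabla\zeta_k|\le C\delta^{-1}$) costs at most $C\int_{S_k}(|\nabla v_k|^2+|\nabla\tilde v_k|^2)+C\delta^{-2}\|v_k-\tilde v_k\|_{L^\infty(S_k)}^2|S_k|\le C\delta+C\delta^{-1}\|v_k-\tilde v_k\|_{L^\infty}^2$; since $v_k\to v$ uniformly on the annulus and the recovery sequence can be taken with $\tilde v_k\to v$ uniformly there (indeed $v=0$ near $\p E$, so no cutoff of $v$ is needed in that region), the second term vanishes as $k\to\infty$ for fixed $\delta$, leaving a contribution $\le C\delta$ in the limit. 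The perimeter part is lower order once $\eps_k\to 0$, but its estimate requires care near the interface: one should choose the good shell so that $\p E$ meets $S_k$ in a set of small measure — possible because the Modica--Mortola densities of $\mathcal E_k$ and $\tilde{\mathcal E}_k$ concentrate on $\p E$ as $k\to\infty$ — and then a Lipschitz transition between $\tilde{\mathcal E}_k$ and $\mathcal E_k$ across $S_k$ contributes only $O(\eps_k\delta^{-2}+\eps_k^{-1}\cdot(\text{small}))\to 0$. Finally, to keep $\{\bar v_k>0\}\subset\{\bar{\mathcal E}_k=1\}$, I would perform the $\mathcal E$-gluing on a shell slightly wider than the $v$-gluing, so that $\bar v_k$ is supported where $\bar{\mathcal E}_k\equiv 1$; since this inclusion holds for both $(v_k,\mathcal E_k)$ and $(\tilde v_k,\tilde{\mathcal E}_k)$, it passes to the glued pair. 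Executing the construction of a recovery sequence with matching, converging boundary data — and in particular handling the perimeter contribution near $\p E$ — is the step I expect to require the most care.
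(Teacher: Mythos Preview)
Your overall strategy matches the paper's: extract compactness via the $C^{1/2}$ estimate and BV compactness of $H(\mathcal E_k)$, verify admissibility of the limit, and prove minimality by gluing a recovery sequence for a competitor onto $(v_k,\mathcal E_k)$ across a thin shell. The compactness and admissibility steps are fine and essentially identical to the paper's proof.

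The gap is in your $\mathcal E$-gluing estimate. When you interpolate $\bar{\mathcal E}_k=(1-\zeta)\tilde{\mathcal E}_k+\zeta\mathcal E_k$ with $|\nabla\zeta|\le C\delta^{-1}$ across the shell $S_k$ of fixed width $\delta$, the potential term contributes
\[
\eps_k^{-1}\int_{S_k} W(\bar{\mathcal E}_k)\,dx \;\le\; \eps_k^{-1}\int_{S_k} W(\mathcal E_k)\,dx \;+\; C\,\eps_k^{-1}\|\mathcal E_k-\tilde{\mathcal E}_k\|_{L^1(S_k)}.
\]
The second term is your ``$\eps_k^{-1}\cdot(\text{small})$'', but $\|\mathcal E_k-\tilde{\mathcal E}_k\|_{L^1}\to 0$ carries no rate relative to $\eps_k$, so this product need not tend to zero. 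The paper's fix (packaged as Lemma~\ref{gluing}) is a second pigeonhole: subdivide the fixed shell into $N\sim\sigma/\eps_k$ sub-shells $\mathbb A_i$ of width $\eps_k$ and choose the one where $\|\mathcal E_k-\tilde{\mathcal E}_k\|_{L^1(\mathbb A_{i^*})}\le N^{-1}\|\mathcal E_k-\tilde{\mathcal E}_k\|_{L^1}$; the $\eps_k^{-1}$ then cancels against the width, leaving $C\sigma^{-1}\|\mathcal E_k-\tilde{\mathcal E}_k\|_{L^1}\to 0$. This is the missing idea you flag as ``requiring the most care''.

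There is a second, smaller issue in your admissibility argument. With a convex combination $\bar v_k=(1-\zeta)\tilde v_k+\zeta v_k$, the set $\{\bar v_k>0\}$ is contained in $\{\tilde v_k>0\}\cup\{v_k>0\}$, hence in $\{\tilde{\mathcal E}_k=1\}\cup\{\mathcal E_k=1\}$; but $\bar{\mathcal E}_k=1$ only on the \emph{intersection} $\{\tilde{\mathcal E}_k=1\}\cap\{\mathcal E_k=1\}$, so taking the $\mathcal E$-gluing on a wider shell does not repair this. The paper's Lemma~\ref{gluing} instead sets $w_k=\min\{\tilde v_k,v_k\}$ on the inner portion of the shell where the $\mathcal E$-interpolation occurs, so that $\{w_k>0\}\subset\{\tilde{\mathcal E}_k=1\}\cap\{\mathcal E_k=1\}\subset\{\bar{\mathcal E}_k=1\}$. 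Finally, to justify that it suffices to test against continuous competitors, the paper takes $(\tilde v,\tilde E)$ to be the actual minimizer with the given boundary data and invokes Lemma~\ref{ACKS continuity}; your parenthetical ``one may take $\bar v$ continuous'' should be read in this sense.
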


A direct consequence of Theorem \ref{gamma convergence} is the uniform convergence of the blowdowns of $u$ to a global minimizer of the functional $I$.

\begin{cor}\label{c39} Let $u$ be a global minimizer satisfying \eqref{bcase1} and $u(0) \in (-1,1)$. Then, along subsequences of $R_k \to \infty$, the rescaled pairs $(v, \mathcal E)$ defined in \eqref{rp} converge on compact sets as above to a limiting pair $(v,E)$ which is a global minimizer of $I$. Moreover, $0 \in \p E$, and the rescaled level curves $R_k^{-1} \cap \{u=u(0)\}$ converge uniformly on compact sets to $\p E$.  
\end{cor}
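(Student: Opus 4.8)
The plan is to realize $(v,E)$ as a blow-down limit of $u$ through the compactness Theorem \ref{gamma convergence}, and then to pin down the origin and identify the limit of the level sets by means of the density estimates of Lemma \ref{density estimate} and Remark \ref{rema}. Throughout write $\tilde u_k(x):=u(R_kx)$, so that $\mathcal E_k(x)=\min\{\tilde u_k(x),1\}$ in \eqref{rp}, and fix once and for all an $s\in(|u(0)|,1)$. \emph{Compactness.} Since $u(0)\le1$, Lemma \ref{one-phase holder} gives $\|u\|_{L^\infty(B_{\rho R})}\le C(\rho R)^{1/2}$, so the functions $v$ in \eqref{rp} are bounded on $B_\rho$ uniformly in $R$, and Lemma \ref{one phase energy bound} gives $J_{1/R}(v,\mathcal E,B_\rho)=R^{-(n-1)}J(u,B_{\rho R})\le C\rho^{n-1}$. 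Thus for each fixed $\rho$ the hypotheses of Theorem \ref{gamma convergence} hold on $B_\rho$ (which reduces to the case $\Omega=B_1$ of the theorem by the natural rescaling), and a standard diagonal argument in $\rho\to\infty$ yields a subsequence $R_k\to\infty$ and a pair $(v,E)\in\mathcal A_0(\R^n)$ with $v_k\to v$ in $C^\alpha_{loc}(\R^n)$, $\mathcal E_k\to\chi_{E^c}-\chi_E$ in $L^1_{loc}(\R^n)$, and $(v,E)$ minimizing $I$ in every ball, hence globally.

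\emph{The origin lies on $\p E$.} Because $u(0)<1$, one has the identities $\{\tilde u_k\ge u(0)\}=\{\mathcal E_k\ge u(0)\}$ and $\{\tilde u_k\le u(0)\}=\{\mathcal E_k\le u(0)\}$. Applying the density estimates (Lemma \ref{density estimate}, Remark \ref{rema}, whose constant $\delta$ is universal) to $u$ at the origin and rescaling by $R_k$ gives, for every $r>0$ and all large $k$,
$$|\{\mathcal E_k\ge u(0)\}\cap B_r|\ge\delta r^n,\qquad|\{\mathcal E_k\le u(0)\}\cap B_r|\ge\delta r^n.$$
Since $\chi_{E^c}-\chi_E=1-2\chi_E$ assumes only the values $\pm1$, both different from $u(0)$, passing to an a.e.\ convergent subsequence and using dominated convergence yields $|E^c\cap B_r|\ge\delta r^n$ and $|E\cap B_r|\ge\delta r^n$ for all $r>0$. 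Hence $\chi_E$ is a.e.\ nonconstant on every ball about $0$, so $0\in\p E$, where from now on $\p E:=\operatorname{supp}|D\chi_E|$.

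\emph{Convergence of the level sets.} One checks that $\{\tilde u_k=u(0)\}\to\p E$ in the local Hausdorff distance, arguing by contradiction in the two directions with the same ingredients. If, inside some compact set, points $x_k\in\{\tilde u_k=u(0)\}$ stayed at distance $\ge\eta$ from $\p E$, then along a subsequence $x_k\to x_\infty$, $\chi_E$ is a.e.\ constant on $B_\eta(x_\infty)$ and $\mathcal E_k\to+1$ or $\mathcal E_k\to-1$ there in $L^1$; but the density estimates for $u$ applied at $R_kx_k$ (legitimate since $\tilde u_k(x_k)=u(0)$ and $|u(0)|<s$) and rescaled by $R_k$ produce a definite mass of $\{\mathcal E_k\le u(0)\}$, resp.\ $\{\mathcal E_k\ge u(0)\}$, inside $B_{\eta/2}(x_k)$ on the side opposite to that limit value, a contradiction. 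Conversely, if points $y_k\in\p E$ stayed at distance $\ge\eta$ from $\{\tilde u_k=u(0)\}$, then on the connected ball $B_\eta(y_k)$ the continuous function $\tilde u_k$ has a fixed sign relative to $u(0)$, so one of $\{\mathcal E_k\le u(0)\}$, $\{\mathcal E_k\ge u(0)\}$ is empty there; since these sets converge in measure to $E$, resp.\ $E^c$, we would obtain $|E\cap B_{\eta/2}(y_\infty)|=0$ or $|E^c\cap B_{\eta/2}(y_\infty)|=0$ at the limit point $y_\infty\in\p E=\operatorname{supp}|D\chi_E|$, which is impossible.

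The verification of the hypotheses of Theorem \ref{gamma convergence} and the diagonal construction are routine. The substantive point — and the reason the corollary is true — is that $u(0)$ lies strictly between the two limiting phases $\pm1$: this is exactly what makes it possible to transport the density estimates for $u$, available only at points where $u$ takes a value in a fixed compact subinterval of $(-1,1)$, through the singular limit, both to place $0$ on $\p E$ and to identify $\lim\{\tilde u_k=u(0)\}$ with $\p E$ exactly, neither larger nor smaller. For this transport it is essential that $\delta$ in the density estimates is universal, which is the content of the last sentence of Remark \ref{rema}; no information about the regularity of $\p E$ is needed.
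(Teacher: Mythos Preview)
Your proof is correct and follows the same approach as the paper: compactness from Theorem \ref{gamma convergence} (fed by Lemmas \ref{one-phase holder} and \ref{one phase energy bound}), then the density estimates of Lemma \ref{density estimate} and Remark \ref{rema} both to place $0$ on $\partial E$ and, combined with the $L^1_{loc}$ convergence of $\mathcal E_k$, to obtain the uniform convergence of the rescaled level sets. The paper's own argument is a two-sentence sketch invoking exactly these ingredients; you have simply filled in the details.
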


Indeed, by the density estimates Lemma \ref{density estimate} (see Remark \ref{rema}), we find that $0 \in \p E$. The uniform convergence of the rescaled level curves follows from the density estimates and the $L_{loc}^1$ convergence of $\mathcal E_k$ to $\chi_{E^c} - \chi_E$.

\begin{proof}[Proof of Theorem \ref{GM} part a)] 

We show that there are no minimizing solutions satisfying \eqref{bcase1}-\eqref{bcase2} in dimension $n \le 7$. 

After a translation we may assume that $u(0) \in (-1,1)$. By Theorem \ref{P1'}, any blowdown limit $(v,E)$ as in Corollary \ref{c39} has the form $v\equiv 0$ and $E$ is a half-space. Then the level set $\{u \le u(0)\}$ is asymptotically flat at infinity, in the sense that:

\smallskip

For any $\delta>0$, and all $R\ge C(\delta,u)$ sufficiently large
\begin{equation}\label{inc}
\{x \cdot \nu_R \le -\delta R \} \subset \{u\le u(0) \} \subset \{x \cdot \nu_R \le \delta R \} \quad \mbox{in $B_R$,}
\end{equation}
where $\nu_R$ denotes a unit direction that depends on $R$.

\smallskip

In turn, this property implies that as $R \to \infty$, the quantity
$$M(R):= \max_{B_R} (u-1)^+,$$
grows faster than any power $R^\alpha$ with $\alpha<1$. We reached a contradiction since Lemma \ref{one-phase holder} states that $M(R) \le C R^{1/2}$. 

In order to prove the claim, it suffices to show that
$$M(\delta R) \le \delta^\alpha M(R),$$
since by assumption \eqref{bcase1}, $M(R)$ is positive for large $R$. We use the inclusions \eqref{inc} and compare in $B_R \cap \{x \cdot \nu_R \ge -\delta R \}$ the subharmonic function $(u-1)^+$ with the harmonic function $w$ that vanishes on the flat part of the boundary where $\{x \cdot \nu_R =0\}$ and with $w=M(R)$ on the part of the boundary in $\p B_R$. We have $(u-1)^+ \le w$ and the harmonic function $w$ satisfies the bound 
$$w \le C \delta \cdot M(R) \quad \mbox{in $B_{\delta R}$,}$$
with $C$ a constant that depends only on the dimension $n$. We get the desired inequality by choosing $\delta$ sufficiently small depending on $\alpha$ and $n$.

\end{proof}

The remaining of the section is dedicated to the proofs of Theorem \ref{TGC} and Theorem \ref{gamma convergence}.

\
 
\begin{proof}[Proof of Theorem \ref{TGC}] a) We remark that the limiting pair $(u.E)\in \mathcal A_0(B_1)$ is admissible due to the a.e. pointwise convergence properties along subsequences. 

The proof of the lower semicontinuity is straightforward. We have
\begin{align}\label{Hin}
\nonumber \int_{\Omega}\eps_k \, \frac{|\nabla \mathcal{E}_k|^{2}}{2}+ \frac {1}{ \eps_k} W(\mathcal{E}_k)\, \, dx & \ge \int_\Omega \sqrt{2 W(\mathcal E_k)} |\nabla \mathcal E_k| dx \\
&=\int _\Omega |\nabla H(\mathcal E_k)| dx,
\end{align}
where $H$ denotes an antiderivative of $\sqrt{2W}$. Now the result follows from the lower semicontinuity property of the Dirichlet integrals of the $v_k$'s and of the BV norms of the  functions $H(\mathcal E_k)$'s. Notice that $H(\mathcal E_k)$ converges in $L^1_{loc}$ to $H(\chi_{E^{c}}-\chi_{E})$ and
$$ \int _\Omega |\nabla H(\chi_{E^{c}}-\chi_{E})| dx = c_0 Per_\Omega(E).$$
 
b) In view of the lower semicontinuity it suffices to construct an approximating sequence that satisfies
 $$\limsup J_{\eps_k}(v_k,\mathcal E_k,\Omega) \le  I (v,E, \Omega).$$
 Fix $\delta >0$ small. First we approximate $E$ in $\Omega$ by a smooth set $F \subset \R^n$ which is included in the open set $\{v< \delta\}$ in $\Omega$. Notice that since $v$ is assumed to be continuous up to the boundary in $\overline \Omega$, the set $\{v< \delta\}$ contains the intersection of a neighborhood of $E$ in $\R^n$ with the domain $\Omega$. By Lemma 1 in Modica \cite{M}, there exists a smooth set $F \subset \R^n$ which approximates the Caccipolli set $E$ in $\Omega$ in the following sense:
 $$ F \cap \Omega \subset \{ u < \delta \} ,$$
 $$ \|\chi_{F \cap \Omega} - \chi_E\|_{L^1} \le \delta, \quad \quad Per_{\Omega}(F) \le Per_{\Omega}(E) + \delta,$$
   $$\mathcal H^{n-1}(\partial F \cap \p \Omega) =0.$$
In view of this, it suffices to prove the result with $E$ replaced by $F$ and $v$ replaced by $\tilde v:=(v-2\delta)^+$ which approximates $v$ in $H^1(\Omega)$. Notice that in $\overline \Omega$, the function $\tilde v$ vanishes in a $\eta$-neighborhood of $F$ for some small $\eta>0$.

Now the construction is standard. Let $d$ denote the signed distance to $\p F$, with $d<0$ in $F$ and $d >0$ in $F^c$, and define $\mathcal E_k$ in $\Omega$ as
 $$ \mathcal E_k:= \varphi(d) g(d/\eps_k) + (1-\varphi(d)) \sgn (d),$$
 where
 
 \noindent a) $\varphi$ is a cutoff function supported in $[- \frac 12 \eta, \frac 12 \eta]$ with $\varphi=1$ in $[- \frac 14 \eta, \frac 14 \eta]$, 
 
 \noindent b) $g$ is the one dimensional solution,
 
  \noindent c) $\sgn$ denotes the signed function. 
  
  Then $(\mathcal E_k, \tilde v) \in \mathcal A(\Omega)$ is admissible, and it is easy to check that
  $$\lim_{k \to \infty} \int_{\Omega}\eps_k \, \frac{|\nabla \mathcal{E}_k|^{2}}{2}+ \frac {1}{ \eps_k} W(\mathcal{E}_k) \, \, dx = c_0 Per_{\Omega} (F).$$
\end{proof}

Before we proceed with the proof of Theorem \ref{gamma convergence} we need to establish a glueing procedure between two admissible pairs in $\mathcal A(B_1)$ that are sufficiently close in an annular region.

\begin{lem}\label{gluing}
    Let $U:=B_{1-\sigma}\setminus B_{1-4\sigma}$ for some $\sigma>0$ fixed, and let $\eps_k \to 0$. Assume that $$(v_k,\mathcal{E}_k)\in\mathcal{A}(B_{1}),  \quad \quad (u_k,\mathcal{F}_k)\in\mathcal{A}(B_{1-\sigma}),$$ and
    \begin{equation*}
        \lim_{k\to \infty} \left({\|v_k-u_k\|_{L^{2}(U)}+\|\mathcal{E}_k-\mathcal{F}_k}\|_{L^{1}(U)} \right) = 0.
    \end{equation*}
    Then there exists $(w_k,\mathcal{G}_k)\in\mathcal{A}(B_{1})$ such that
  $$ \mbox{ $(w_k,\mathcal{G}_k)=(u_k,\mathcal{F}_k)$ in $B_{1-4\sigma}$ and $(w_k,\mathcal{G}_k)=(v_k,\mathcal{E}_k)$ in $B_{1-\sigma}^{c}$,}$$
  and
        $$\limsup_{k\to\infty}J_{\eps_k}(w_k,\mathcal{G}_k,U) \leq 3 J_{\eps_k}(u_k,\mathcal{F}_k,U)+3J_{\eps_k}(v_k,\mathcal{E}_k ,U).$$
\end{lem}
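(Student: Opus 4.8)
The plan is to construct the glued pair $(w_k,\mathcal{G}_k)$ explicitly by interpolating between the two given pairs across the annulus $U = B_{1-\sigma}\setminus B_{1-4\sigma}$, using a radial cutoff to transition. First I would choose a radius $\rho_k \in (1-3\sigma, 1-2\sigma)$ by a pigeonhole/averaging argument: since the energies $J_{\eps_k}(u_k,\mathcal{F}_k,U)$ and $J_{\eps_k}(v_k,\mathcal{E}_k,U)$ control the radial integrals of $|\nabla u_k|^2$, $|\nabla v_k|^2$, $\eps_k|\nabla\mathcal{E}_k|^2$, $\eps_k|\nabla\mathcal{F}_k|^2$ over spheres $\p B_\rho$, there must exist a good radius $\rho_k$ in the middle third of the annulus where all these spherical energies are bounded by a constant times the bulk energies divided by $\sigma$. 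More importantly, combining the smallness of $\|v_k-u_k\|_{L^2(U)}$ and $\|\mathcal{E}_k-\mathcal{F}_k\|_{L^1(U)}$ with a similar averaging over radii, one finds $\rho_k$ where the traces of $u_k$ and $v_k$ (and of $\mathcal{E}_k$, $\mathcal{F}_k$) agree in a strong enough sense that gluing is cheap.

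The construction itself: set $(w_k,\mathcal{G}_k) = (u_k,\mathcal{F}_k)$ inside $B_{\rho_k - \eta_k}$ and $(v_k,\mathcal{E}_k)$ outside $B_{\rho_k}$, and in the transition shell $B_{\rho_k}\setminus B_{\rho_k-\eta_k}$ take $w_k$ to be the convex combination $\phi u_k + (1-\phi) v_k$ for a radial cutoff $\phi$ going from $1$ to $0$, and similarly $\mathcal{G}_k$ an appropriate interpolation of $\mathcal{E}_k$ and $\mathcal{F}_k$ — here one must be careful that the interpolation of the phase keeps $-1 \le \mathcal{G}_k \le 1$ (true for a convex combination) and, crucially, maintains the admissibility constraint $\{w_k>0\}\subset\{\mathcal{G}_k=1\}$. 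This last point is the main obstacle: a naive convex combination of $\mathcal{E}_k$ and $\mathcal{F}_k$ need not equal $1$ where $w_k>0$. The fix is to exploit that in the regions where $v_k$ or $u_k$ is positive, the corresponding phase is already $1$, so on the set where $w_k>0$ one can force $\mathcal{G}_k$ to equal $1$ (e.g. by taking $\mathcal{G}_k := \max\{\phi\mathcal{F}_k + (1-\phi)\mathcal{E}_k,\ \operatorname{sgn}(w_k)\cdot\chi_{\{w_k>0\}}\}$, or more cleanly by defining the phase to be $1$ on $\{w_k>0\}$ and the interpolated value otherwise), checking this only increases the gradient term on a controlled set and does not spoil the $L^1$ convergence.

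For the energy estimate: the Dirichlet term of $w_k$ in the shell satisfies $|\nabla w_k|^2 \le C(|\nabla u_k|^2 + |\nabla v_k|^2 + |\nabla\phi|^2|u_k - v_k|^2)$, and since $|\nabla\phi| \le C/\eta_k$ while $\|u_k-v_k\|_{L^2(U)}\to 0$, by choosing $\eta_k\to 0$ slowly enough (so that $\|u_k-v_k\|_{L^2(U)}/\eta_k \to 0$, possible since the $L^2$ norm tends to zero) the cross term is negligible in the limit. The analogous bound for $\eps_k|\nabla\mathcal{G}_k|^2 + \eps_k^{-1}W(\mathcal{G}_k)$ uses the same interpolation estimate together with the Lipschitz bound on $W$ on $[-1,1]$ and the $L^1$-smallness of $\mathcal{E}_k - \mathcal{F}_k$; one splits the shell into the part where both phases are near $\pm 1$ (where $W$ is small and the interpolant stays in the good region) and the transition layer (of small measure). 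Summing the contributions from the three regions — the inner piece where $(w_k,\mathcal{G}_k)=(u_k,\mathcal{F}_k)$, the outer piece where it equals $(v_k,\mathcal{E}_k)$, and the shell — and absorbing the vanishing cross terms, yields $\limsup_k J_{\eps_k}(w_k,\mathcal{G}_k,U) \le 3 J_{\eps_k}(u_k,\mathcal{F}_k,U) + 3 J_{\eps_k}(v_k,\mathcal{E}_k,U)$, with the factor $3$ comfortably absorbing the constants from the elementary inequality $(a+b+c)^2 \le 3(a^2+b^2+c^2)$ applied to the gradient decomposition. I expect the bookkeeping around the admissibility constraint on $\mathcal{G}_k$ in the transition layer, and verifying the energy there stays controlled, to be where the real work lies; everything else is a routine cutoff argument.
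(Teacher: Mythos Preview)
Your overall architecture---cutoff interpolation, a pigeonhole choice of radius, and the factor $3$ from $(a+b+c)^2\le 3(a^2+b^2+c^2)$---matches the paper. But the admissibility step has a genuine gap. With $w_k=\phi u_k+(1-\phi)v_k$ and both $u_k,v_k\ge 0$, the set $\{w_k>0\}$ in the transition shell equals $\{u_k>0\}\cup\{v_k>0\}$, whereas a convex combination of the phases equals $1$ only on $\{\mathcal E_k=1\}\cap\{\mathcal F_k=1\}$. Your proposed fix of forcing $\mathcal G_k=1$ on $\{w_k>0\}$ (via a $\max$ or by redefinition) will in general produce a jump of $\mathcal G_k$ across $\partial\{w_k>0\}$: at a point where, say, $u_k>0$, $v_k=0$, $\mathcal F_k=1$, $\mathcal E_k=-1$, the interpolated phase is $2\phi-1<1$, and resetting it to $1$ creates a discontinuity. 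Such a $\mathcal G_k$ need not lie in $H^1$, and even if mollified the extra $\eps_k|\nabla\mathcal G_k|^2$ is not controlled by the right-hand side.

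The paper avoids this by two devices you are missing. First, it takes $w_k=\eta_1 u_k+\eta_2\min\{u_k,v_k\}+\eta_3 v_k$, so that in the middle subannulus $B_{1-2\sigma}\setminus B_{1-3\sigma}$ one has $w_k=\min\{u_k,v_k\}$; then $\{w_k>0\}=\{u_k>0\}\cap\{v_k>0\}\subset\{\mathcal E_k=1\}\cap\{\mathcal F_k=1\}$, and any convex combination of $\mathcal E_k,\mathcal F_k$ is automatically $1$ there---admissibility is free. Second, the paper decouples the two transitions: $w_k$ is glued over a shell of fixed width $\sim\sigma$ (so the cross term $\sigma^{-2}\|u_k-v_k\|_{L^2}^2$ is harmless), while $\mathcal G_k$ is glued over a thin ring of width $\eps_k$ inside $B_{1-2\sigma}\setminus B_{1-3\sigma}$, chosen by pigeonhole among $\sim\sigma/\eps_k$ such rings so that $\eps_k^{-1}\|\mathcal E_k-\mathcal F_k\|_{L^1(\text{ring})}\le C(\sigma)\|\mathcal E_k-\mathcal F_k\|_{L^1(U)}\to 0$. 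Your single shell of width $\eta_k$ faces a tension: the $\eps_k^{-1}W(\mathcal G_k)$ term over the shell forces $\eta_k\lesssim\eps_k$, but then $\eta_k^{-2}\|u_k-v_k\|_{L^2}^2$ need not vanish since you only know $\|u_k-v_k\|_{L^2}\to 0$, not $o(\eps_k)$. Using $\min\{u_k,v_k\}$ and separating the two scales resolves both issues at once.
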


\begin{proof}
    The key point is to construct $(w_k, \mathcal{G}_k)$ in $U$. We first construct the function $w_k$ and then construct $\mathcal{G}_k$.

\smallskip

     {\it Step 1:}  Let $\eta_{1},\eta_{2},\eta_{3}$ be a partition of unity in $B_{1}$ so that:
        
        \noindent
         1) $supp(\eta_{1})=B_{1-3\sigma}$, and $\eta_{1}=1$ in $B_{1-4\sigma}$,
         
         \noindent
        2) $supp(\eta_{2})=B_{1-\sigma}\setminus B_{1-4\sigma}$, and $\eta_{2}=1$ in $B_{1-2\sigma}\setminus B_{1-3\sigma}$,

        \noindent
        3) $supp(\eta_{3})=B_{1-2\sigma}^{c}$, and $\eta_{3}=1$ in $B_{1-\sigma}^{c}$,
       
        \noindent
        4) $\|\nabla\eta_{i}\|_{L^{\infty}}\leq C\sigma^{-1}$.
        
        We define $w_k$ in $B_{1}$ as
        \begin{equation}
            w_k=\eta_{1}u_k+\eta_{2}\min\{u_k,v_k\}+\eta_{3}v_k.
        \end{equation}
        Then clearly $$w_k=u_k \quad \mbox{ in} \quad B_{1-4\sigma}, \quad \quad w_k \le u_k \quad \mbox{in}  \quad B_{1-2 \sigma},$$
        and 
        $$w_k=v_k \quad \mbox{ in} \quad B_{1-\sigma}^c, \quad \quad w_k \le v_k \quad \mbox{in}  \quad B_{1-3 \sigma}^c.$$
     In $U$ we have
        \begin{equation*}
            |\nabla w_k| \leq C\sigma^{-1}|u-v|+|\nabla u_k|+|\nabla v_k|,
        \end{equation*}
        hence $$ J_{\eps_k}(w_k,0,U)\leq3 J_{\eps_k}(u_k,0,U)+3J_{\eps_k}(v_k,0,U)+o(1).$$

        {\it Step 2:} We split the inner annular region $B_{1-2\sigma} \setminus B_{1-3 \sigma}$ in rings of width $\eps=\eps_k$. Let
        $$r_i:=1-3\sigma+i \eps, \quad \quad \mbox{ with $1\leq i\leq N:=[\sigma/\eps]$,}$$ 
        and denote $\mathbb{A}_{i}=B_{r_{i}}\setminus B_{r_{i}-\eps}$. Let $\psi_{i}$ be supported in $B_{r_{i}}$ so that
        \begin{equation*}
            \psi_{i}\Big|_{B_{r_{i}-\eps}}=1,\quad|\nabla\psi_{i}|\leq C \eps^{-1} \chi_{\mathbb{A}_{i}}.
        \end{equation*}
        Let $\mathcal{G}_{i,k}$ be a candidates of $\mathcal{G}_k$ that interpolates $\mathcal E_k$ and $\mathcal F_k$ in $\mathbb{A}_i$:
        \begin{equation}
            \mathcal{G}_{i,k}:=\psi_i \mathcal{F}_k+(1-\psi_i)\mathcal{E}_k.
        \end{equation}
        Then in $\mathbb{A}_{i}$, 
        $$|\mathcal{G}_{i,k}-\mathcal{E}_k|\leq|\mathcal{F}_k-\mathcal{E}_k|\leq2,$$ 
        and
        \begin{equation*}
            |\nabla\mathcal{G}_{i,k}|\leq|\nabla\mathcal{E}_k|+|\nabla\mathcal{F}_k|+C \eps^{-1}|\mathcal{F}_k-\mathcal{E}_k|,
        \end{equation*}
        hence
        \begin{align*}
            J_{R}(0,\mathcal{G}_{i,R},\mathbb{A}_{i})\leq3J_{R}(0,\mathcal{E}_{R},\mathbb{A}_{i})+3J_{R}(0,\mathcal{F}_{R},\mathbb{A}_{i})+C_{1}\eps^{-1}\|\mathcal{F}_{R}-\mathcal{E}_{R}\|_{L^{1}(\mathbb{A}_{i})},
        \end{align*}
        with $C_{1}$ a constant depending on $\max|W'|$. We can choose an $1\leq i^{*}\leq N$, so that the last term
        \begin{equation*}
            C_{1}\eps^{-1}\|\mathcal{F}_k-\mathcal{E}_k\|_{L^{1}(\mathbb{A}_{i^{*}})}\leq C(\sigma) \|\mathcal{F}_k-\mathcal{E}_k\|_{L^{1}(B_{1-2\sigma} \setminus B_{1-3 \sigma})}.
        \end{equation*}
        For such an $i^{*}$, we define $\mathcal{G}_k=\mathcal{G}_{i^{*},k}$ and have
        \begin{align*}
            J_{\eps}(0,\mathcal{G}_k,U)=&J_{\eps}(0,\mathcal{F}_k, U\cap B_{r_{i}-\eps})+J_{\eps}(0,\mathcal{E}_k,U\setminus B_{r_{i}})\\
            & \quad +J_{\eps}(0,\mathcal{G}_{i^{*},\eps},\mathbb{A}_{i^{*}})\\
            \leq&3J_{\eps}(0,\mathcal{F}_k,U)+3J_{\eps}(0,\mathcal{E}_k,U)+o(1).
        \end{align*}

       It remains to check the pair $(w_k,\mathcal{G}_k)$ is admissible, meaning that $$\{w_k>0\} \subset \{\mathcal G_k=1\}.$$ This is obvious since by construction in $B_{1-2\sigma} \setminus B_{1-3 \sigma}$ we have
        $$w_k= \min\{u_k,v_k\} \quad \mbox{and} \quad  \{\mathcal E_k=1\} \cap \{\mathcal F_k=1\}  \subset \{G_k=1\}.$$    
\end{proof}

\begin{proof}[Proof of Theorem \ref{gamma convergence}]
    Since $v_k$ are uniformly bounded in $H^1(B_1)$, and $H(\mathcal E_k)$ are uniformly bounded in $BV(B_1)$ by \eqref{Hin}, the standard compactness results show that we can extract convergent subsequences 
    $$v_k \to v \quad \mbox{in} \quad L^2(B_1), \quad \quad \mathcal E_k \to \mathcal E \quad \mbox{in} \quad L^1(B_1).$$
    Using that
    $$ \int_{B_1} W(\mathcal E_k) dx \le M \eps_k \to 0,$$
    we conclude that $\mathcal E= \pm 1$ a.e. in $B_1$, hence
    $$ \mathcal E= \chi_{E^c} - \chi_E,$$
    for a Caccioppoli set $E$. Lemma \ref{one-phase holder} gives that the $v_k$'s have bounded $C^{1/2}$ norms locally, hence $v_k \to v$ in $C^\alpha_{loc}(B_1)$.
If $v(x)>0$ for some $x\in B_{1}$, then $v_k>0$ in some neighborhood of $x$ due to $C^{\alpha}$ convergence. As $(v_k,\mathcal{E}_k)\in\mathcal{A}(B_{1})$, we have $\mathcal{E}_k=1$ near $x$, hence also $\mathcal{E}=1$ near $x$ which proves $(v,E)\in\mathcal{A}_0(B_{1})$. 

It remains to show that $(v,E)$ minimizes $I$ in $B_1$. 
Fix $r<1$ and let $(\tilde v, \tilde E)$ be a minimizing pair for $I$ among all pairs which coincide with $(v,E)$ in $B_1 \setminus B_r$ and that are unrestricted in $B_r$. 

Notice that $\tilde v$ is continuous according to Lemma \ref{ACKS continuity} since the boundary data $v$ is H\"older continuous. Then we apply Theorem \ref{TGC} and obtain an approximating sequence $(u_k, \mathcal F_k) \in \mathcal A(B_1)$ such that
$$ J_{\eps_k}(u_k, \mathcal F_k, B_1) \to I( \tilde v, \tilde E,B_1).$$  
Using the uniform bound of the energies of $u_k$ and $v_k$ in $B_1$, given $\delta>0$ we can choose an annular region $U=B_{1-\sigma} \setminus B_{1-4 \sigma} \subset B_r^c,$ with $\sigma$ sufficiently small such that, after passing to a subsequence, we have
$$ J_{\eps_k}(v_k, \mathcal E_k, U)+J_{\eps_k}(u_k, \mathcal F_k, U) \le \delta,$$
for all large $k$. By Lemma \ref{gluing}, we find $(w_k, \mathcal G_k) \in  \mathcal A(B_1)$ which interpolate between the pairs $(v_k, \mathcal E_k)$ and $(u_k, \mathcal F_k)$ in the region $U$. Using the minimality of $v_k$ in $B_1$,
$$ J_{\eps_k}(v_k,\mathcal E_k,B_1) \le J_{\eps_k}(w_k,\mathcal G_k,B_1),$$
and the conclusion of Lemma \ref{gluing} we infer that
$$ J_{\eps_k}(v_k,\mathcal E_k,B_{1-4\sigma}) \le J_{\eps_k}(u_k,\mathcal E_k,B_{1-\sigma}) + 4\delta.$$
We let $k \to \infty$ and use the lower semicontinuity property to get
$$I(v,E,B_{1-4 \sigma}) \le I(\tilde v,\tilde E,B_1) + 4 \delta.$$
We let $\sigma \to 0$ and then $\delta \to 0$ to obtain
$$ I(v,E,B_1) \le I(\tilde v,\tilde E,B_1).$$

\end{proof}

\section{Unbounded solutions on both sides}

In this section we prove Theorem \ref{GM} part b). 

We assume throughout that $u$ is a global minimizer of $J$ which satisfies part ii) of Definition \ref{bcases}, that is, 
\begin{equation}\label{ccase}
 \{u <-1\} \ne \emptyset, \quad \quad \{ u>1 \} \ne \emptyset. 
 \end{equation}

Denote by
$$M(R):= \max_{B_R} |u|,$$
and, as in Theorem \ref{GM} part b), we assume that
\begin{equation}\label{mr}
M(R)=o(R^2) \quad \quad \mbox{as} \quad R \to \infty.
\end{equation}
Moreover, by Lemma \ref{l25}, \eqref{ccase} implies the existence of a small constant $\delta_0>0$ such that
\begin{equation}\label{mr2}
M(R) \ge \delta_0 R \quad \mbox{for all large $R$.}
\end{equation}

\subsection{Asymptotic flatness}

We first show that $u$ is asymptotically linear at infinity along a subsequence.

\begin{prop}\label{AL} Assume that $u$ is a global minimizer of $J$ and \eqref{ccase}-\eqref{mr} hold. There exist sequences of $R_k \to \infty$, $\eps_k \to 0$ and $a_k \ge \delta_0$ such that (up to a rotation)
$$ |u(x) - a_k x_n| \le \eps_k a_k R_k \quad \mbox{in} \quad B_{R_k}.$$
\end{prop}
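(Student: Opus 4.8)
The plan is to run a blow-down argument at the natural scale $M(R)$ and show that, along a subsequence, the rescalings converge to a global minimizer of the Dirichlet energy, i.e. a harmonic function that is additionally a minimizer — hence an affine function — and then to upgrade this to the stated linear approximation with slope bounded below.  I would begin by introducing the rescalings $u_R(x):= u(Rx)/M(R)$.  These satisfy $|u_R|\le 1$ in $B_1$ with equality somewhere, and by \eqref{mr2} we have $M(R)\ge \delta_0 R$, so the potential term gets a factor $R^{-2}\int_{B_R} W(u)\,dx$ after rescaling; since $W$ is bounded and $W(u)$ is supported in $\{|u|<1\}=\{|u_R|<M(R)^{-1}\}$, this contribution is controlled.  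The Euler--Lagrange equation becomes $\triangle u_R = M(R)^{-1} R^2\, W'(u(R\cdot))/\dots$ — more precisely $\triangle u_R = (R^2/M(R))\,W'(u_R\, M(R))$, whose right-hand side is supported where $|u_R|<M(R)^{-1}$ and bounded there by $(R^2/M(R))\max|W'|$; by \eqref{mr} the prefactor $R^2/M(R)$ need \emph{not} stay bounded, so this is the delicate point and one cannot simply pass to the limit in the PDE pointwise.  Instead I would work variationally.

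The key steps, in order, are as follows.  First, establish a uniform energy bound: $J(u,B_R)\le C\,M(R)^2 R^{n-2} + C R^{n-1}$ for a sequence of radii, by comparing $u$ in $B_R$ with a competitor that is the harmonic replacement of $u$ on $\p B_R$ capped off suitably where $|u|\ge 1$ — the Dirichlet part of such a competitor is $O(M(R)^2 R^{n-2})$ by standard gradient estimates for harmonic functions with that boundary oscillation, and since $M(R)\ge\delta_0 R$ the perimeter-type term $R^{n-1}$ is lower order.  Second, from this and minimality deduce that the rescaled minimizers $u_R$ have uniformly bounded Dirichlet energy in $B_{1/2}$, and that the rescaled potential contribution tends to zero: $(R^2/M(R)^2)\int_{B_1} W(u(R\cdot))\,dx\to 0$ along the sequence; combined with the density/measure estimate \eqref{ules}-type control on $|\{|u|<1\}\cap B_R|$ — or more robustly, directly from the energy bound — one gets that the ``free-boundary'' set $\{|u_R|<M(R)^{-1}\}$ shrinks.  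Third, extract a subsequence $u_{R_k}\to u_\infty$ weakly in $H^1_{loc}(B_1)$ and strongly in $L^2_{loc}$; show $u_\infty$ is a local minimizer of the Dirichlet energy in $B_1$ (lower semicontinuity for the limit, and a recovery sequence for competitors, exactly the $\Gamma$-convergence mechanism of Theorem \ref{TGC} but now with the perimeter term scaled away because $R^2/M(R)$ may blow up — here I'd use that $M(R)\ge\delta_0 R$ makes the perimeter term negligible relative to $M(R)^2 R^{n-2}$ only when $M(R)\gg R$, so I may need to split into the case $M(R)\sim R$ and $M(R)/R\to\infty$).  A minimizer of the Dirichlet integral is harmonic, and a \emph{global} one of linear growth is affine; since $u_\infty$ attains its sup of modulus $1$ on $\overline{B_1}$, $u_\infty$ is a nonconstant affine function $u_\infty(x)=a_\infty\,\nu\cdot x + b_\infty$.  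Fourth, undo the scaling: $|u(Rx) - M(R)(a_\infty \nu\cdot x + b_\infty)| = o(M(R))$ uniformly on $B_{1/2}$, i.e. $u$ is within $\eps_k a_k R_k$ of an affine function $a_k x_n + c_k$ on $B_{R_k}$ after rotating so $\nu=e_n$; a further translation of the origin along $\nu$ absorbs the constant $c_k$, and rescaling $R_k$ by the fixed factor $2$ restores the full ball.  Finally, the slope bound $a_k\ge\delta_0$ is immediate from \eqref{mr2}: $a_k$ is comparable to $M(R_k)/R_k \ge \delta_0$.

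The main obstacle I anticipate is the possibility that $R^2/M(R)\to\infty$, so that the equation $\triangle u_R = (R^2/M(R))W'(u_R M(R))$ has an unbounded-coefficient right-hand side and the naive limit is not obviously harmonic.  The resolution is to never pass to the limit in the PDE but to argue purely through the energy: the competitor comparison shows the Dirichlet energy of $u_R$ is essentially minimal given its boundary data up to an error $o(1)$ (because $M(R)^2 R^{n-2}$ dominates both the potential contribution $O(R^{n})/M(R)^2 \cdot (\text{small set})$ — controlled since $W(u)$ is supported on a set of measure $o(R^n)$ when divided through — and the perimeter-type $R^{n-1}$), and one then invokes that a sequence of almost-minimizers of the Dirichlet energy with bounded energy converges to a minimizer, hence a harmonic function.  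A secondary technical point is justifying that the limit attains modulus $1$, for which one uses the uniform $C^{1/2}$-type bounds from the oscillation estimate (Harnack plus interior gradient estimates as in the proof of Lemma \ref{one-phase holder}) to get equicontinuity of $u_R$ near the point where $|u_R|$ is maximal, so the sup passes to the limit; alternatively one argues that if $u_\infty$ were constant $\equiv \pm 1$ on $B_1$ then by the density estimates of Lemma \ref{density estimate} applied at a point where $u=0$ one reaches a contradiction, forcing $u_\infty$ nonconstant.
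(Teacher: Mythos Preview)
Your blow-down strategy and the identification of the limit as harmonic are correct and match the paper's approach: the paper also rescales by $M(R)$, observes that the rescaled potential $W_R(v)=(R/M(R))^2 W(M(R)v)$ is \emph{uniformly bounded} by $\delta_0^{-2}\|W\|_\infty$ thanks to \eqref{mr2} (so your worry about the PDE coefficient is handled at the energy level by Lemma~\ref{two phase holder}), obtains $C^\alpha_{loc}$ compactness, and then uses a competitor argument to show the limit $v_\infty$ is harmonic in $B_1$.

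The genuine gap is the step where you conclude $u_\infty$ is affine. The limit $v_\infty$ is a harmonic function defined only in $B_1$, and a harmonic function in $B_1$ with $|v_\infty|\le 1$ is of course not affine in general (e.g.\ $x_1^2-x_2^2$ in $\R^2$). Your phrase ``a \emph{global} one of linear growth is affine'' suggests you hope for a global limit, but this is unavailable: on $B_2$ one only has $|u_R|\le M(2R)/M(R)$, and the hypotheses $M(R)\ge\delta_0 R$, $M(R)=o(R^2)$ do not force $M$ to be doubling; even if they did, the resulting growth would give a harmonic polynomial, not a linear function. A further red flag is that your argument never uses the subquadratic hypothesis \eqref{mr} in any essential way, yet the conclusion is false without it.

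The paper repairs this with a dichotomy on $\nabla v_\infty(0)$. In \emph{Case 1} some blow-down limit has $\nabla v_\infty(0)\ne 0$; then $v_\infty(x)=a\,x_n+O(|x|^2)$ near $0$ after rotation, and the conclusion follows on balls $B_{\rho_k R_k}$ with $\rho_k\to 0$ slowly. In \emph{Case 2} every limit satisfies $\nabla v_\infty(0)=0$, hence $|v_\infty|\le C|x|^2$, which iterated gives $M(\rho R)\le \rho^{7/4}M(R)$ for small universal $\rho$ and thus $M(R)\ge R^{3/2}$ for large $R$. This upgrades $\|W_R\|_\infty\le R^{-1}$, so $u_R$ is quantitatively close to its harmonic replacement: $|u_R - p_R\cdot x - \tfrac12 x^T A_R x|\le C(R^{-\sigma}+|x|^3)$. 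A second dichotomy on $|p_R|$ then either produces the desired sequence (when $|p_R|\ge R^{-\sigma/4}$), or forces $M(r)/r^2$ to stay bounded below along a sequence $r_k\to\infty$, contradicting \eqref{mr}. This final contradiction is where the $o(|x|^2)$ hypothesis enters, and it is the idea your outline is missing.
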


\begin{rem}\label{remal} The lower bound $a_k \ge \delta_0$ is simply a consequence of \eqref{mr2}. Since $$\frac{U_a(t)}{t} \to a \quad \mbox{ as} \quad  t \to \pm \infty,$$ where $U_a$ denotes the one-dimensional solution (see Definition \ref{Ua}), the conclusion can be rewritten as
$$U_a(x_n - \eps R) \le u(x) \le U_a(x_n + \eps R),$$
for a sequence of $R_k \to \infty$, $\eps_k \to 0$, $a_k \ge \delta_0$.
\end{rem}

The strategy to prove Proposition \ref{AL} is to consider the rescaled functions
$$u_R(x):= \frac{1}{M(R)} \cdot u(Rx),$$
which are bounded by 1 in $B_1$, and show that along subsequences they converge to a harmonic function in $B_1$. Notice that $u_R$ minimizes the rescaled energy $J_R$ in $B_1$
$$J_R(v,B_1):= \int_{B_1} \frac 12 |\nabla v|^2 +  W_R(v)\, dx,$$
with potential
\begin{equation}\label{wr}
W_R(v):= \left( \frac{R}{M(R)}\right )^2 \cdot W(M(R) v).
\end{equation}
In view of \eqref{mr2}, the $W_R$'s are bounded in $L^\infty$ uniformly in $R$. We first prove an elementary lemma which gives the compactness in $C^\alpha_{loc}$ of minimizers of $J_R$.

\begin{lem}[H\"older estimate]\label{two phase holder}
Let $v$ be a minimizer in $B_{1}$ of an energy functional
    \begin{equation*}
       \int_{B_1}\frac 12 |\nabla v|^{2}+F(x,v)\,dx
    \end{equation*}
    with $\|F\|_{L^\infty}\leq1$ and $\|v\|_{L^{2}(B_{1})}\leq1$. Then 
    $$\|v\|_{H^{1}(B_{1/2})}\leq C, \quad \mbox{and} \quad \|v\|_{C^{\alpha}(B_{1/2})}\leq C ,$$
    for some $\alpha>0$.
\end{lem}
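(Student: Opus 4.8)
The plan is to establish the two estimates by a standard two-step argument: first the energy (i.e.\ $H^1$) bound by comparison with a harmonic competitor, and then the $C^\alpha$ bound by a De Giorgi--Nash--Moser type iteration exploiting that the difference between $v$ and its harmonic replacement solves an equation with bounded right-hand side.

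\textbf{Step 1: energy bound.} Let $h$ be the harmonic function in $B_{3/4}$ with $h=v$ on $\p B_{3/4}$. Using $h$ as a competitor and the minimality of $v$,
$$\int_{B_{3/4}} \tfrac12 |\nabla v|^2 + F(x,v)\, dx \le \int_{B_{3/4}} \tfrac12 |\nabla h|^2 + F(x,h)\, dx.$$
Since $|F|\le 1$, the two potential terms differ by at most $2|B_{3/4}|\le C$, and $\int |\nabla h|^2 \le \int |\nabla v|^2$ by the Dirichlet principle; combining these with the elementary identity $\int_{B_{3/4}} |\nabla(v-h)|^2 = \int_{B_{3/4}} |\nabla v|^2 - \int_{B_{3/4}} |\nabla h|^2$ (using that $v-h$ vanishes on $\p B_{3/4}$ and $h$ is harmonic) gives $\int_{B_{3/4}} |\nabla(v-h)|^2 \le C$. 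To close the estimate we need $\int_{B_{3/4}}|\nabla h|^2 \le C$ as well: this follows from interior gradient estimates for harmonic functions together with the mean value property, which bound $\|\nabla h\|_{L^\infty(B_{1/2})}$ and in fact $\|\nabla h\|_{L^2(B_{5/8})}$ by $\|h\|_{L^2(B_{3/4})} = \|v\|_{L^2(B_{3/4})} \le 1$. Hence $\|v\|_{H^1(B_{1/2})}\le C$ (covering $B_{1/2}$ by balls, or simply working in $B_{5/8}$).

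\textbf{Step 2: H\"older bound.} One route is the direct De Giorgi argument: since $v$ minimizes, for any ball $B_\rho(x_0)\subset B_{3/4}$ and any competitor $w$ agreeing with $v$ on $\p B_\rho(x_0)$ one has $\int_{B_\rho}|\nabla v|^2 \le \int_{B_\rho}|\nabla w|^2 + 2|B_\rho|$; choosing $w$ to be the harmonic replacement yields the decay estimate $\int_{B_\rho(x_0)}|\nabla v|^2 \le C(\rho/r)^n \int_{B_r(x_0)}|\nabla v|^2 + C\rho^n$, and iterating this Campanato-type inequality gives $\int_{B_\rho(x_0)}|\nabla v|^2 \le C \rho^{n-2+2\alpha}$ for some $\alpha\in(0,1)$, which by Morrey's embedding is exactly $v\in C^\alpha(B_{1/2})$ with the quantitative bound. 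Alternatively, observe that $\triangle v = \p_v F(x,v)$ only weakly and $F$ need not be differentiable, so it is cleaner to compare locally with harmonic replacements as just described rather than invoking an equation. Either way the constants depend only on $n$, $\alpha$, and the bounds $\|F\|_{L^\infty}\le 1$, $\|v\|_{L^2}\le 1$.

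\textbf{Main obstacle.} The only subtlety is that we are not assuming $F$ differentiable or even continuous in $v$, so $v$ does not obviously solve a PDE with controlled coefficients; the resolution is to avoid the Euler--Lagrange equation entirely and run the Campanato iteration purely at the level of the minimization inequality, comparing with harmonic functions. Everything else — the Dirichlet principle, interior estimates for harmonic functions, Morrey's embedding, and the covering argument to pass from $B_{5/8}$ to $B_{1/2}$ — is routine. I would therefore structure the proof as: (i) harmonic comparison to control $\int |\nabla(v-h)|^2$; (ii) interior harmonic estimates to control $\int|\nabla h|^2$, giving the $H^1$ bound; (iii) localized harmonic comparison plus iteration of the resulting Morrey--Campanato inequality to get $C^\alpha$.
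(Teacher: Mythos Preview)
Your Step 2 (Campanato iteration via harmonic replacement) is exactly the route the paper takes for the H\"older estimate. Step 1, however, differs: the paper obtains the $H^1$ bound by a Caccioppoli-type argument, comparing the energies of $v$ and $(1-\varphi)v$ for a cutoff $\varphi$ equal to $1$ on $B_{1/2}$ and vanishing outside $B_{3/4}$; this is more direct than your harmonic-replacement route and bypasses any need to control $\|h\|_{L^2}$.

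On that point, your assertion $\|h\|_{L^2(B_{3/4})} = \|v\|_{L^2(B_{3/4})}$ is false: $h$ agrees with $v$ only on $\partial B_{3/4}$, not in the interior. The fix is easy --- you have already shown $\int_{B_{3/4}}|\nabla(v-h)|^2 \le C$, so Poincar\'e gives $\|v-h\|_{L^2(B_{3/4})} \le C$, whence $\|h\|_{L^2(B_{3/4})} \le 1+C$, and your interior harmonic estimate then goes through. Also, in the decay inequality of Step 2 the additive error should be $Cr^n$ rather than $C\rho^n$ (the potential-energy discrepancy is taken over $B_r$, not $B_\rho$); the iteration works regardless. With these two small corrections your argument is complete and, for Step 2, coincides with the paper's.
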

\begin{proof} The $H^1(B_{1/2})$ bound for $u$  follows from Caccioppoli's inequality which is obtain by comparing the energies of $v$ and $(1-\varphi) v$ in $B_1$, and using that $\|F\|_{L^\infty}\leq1$. Here $\varphi$ denotes a cutoff function which is $1$ in $B_{1/2}$ and vanishes outside $B_{3/4}$.

For the H\"older continuity of $v$, it suffices to show that for balls $B_r(x_0) \subset B_{3/4}$ we have
$$\int_{B_{r}(x_0)} |\nabla v|^2 \, dx \le C_0 r^{n+2\alpha -2}.$$
This follows from the standard Campanato iteration. Indeed, assume the desired bound holds in the ball $B_r(x_0)$, and then we need to show that it holds also in $B_{\rho r}(x_0)$ for some $\rho$ small universal. Let $\bar v$ denote the harmonic replacement of $v$ in $B_r(x_0)$. Then the minimality of $v$ implies
$$\int_{B_r(x_0)} |\nabla v - \nabla \bar v|^2 dx =  \int_{B_r} |\nabla v|^2 - |\nabla \bar v|^2 \, dx \le 4 |B_r|.$$
This together with the interior gradient estimate for $\bar v$,
$$ \int_{B_{\rho r}(x_0)}|\nabla \bar v|^2 dx \le C \rho ^n \int_{B_{r}(x_0)}|\nabla \bar v|^2 dx  \le C \rho ^n \int_{B_{ r}(x_0)}|\nabla v|^2 dx,$$
gives
$$ \int_{B_{\rho r}(x_0)}|\nabla v|^2 dx \le C \rho ^n \cdot C_0r^{n+2\alpha -2} + C r^n \le C_0(\rho r)^{n+2\alpha -2},$$
provided that $\rho$ is chosen small depending on $\alpha \in (0,1)$ and $n$, and $C_0$ large.

\end{proof}

\begin{proof} [Proof of Proposition \ref{AL}] Along any sequence $R_k \to \infty$ we can find a subsequence of rescalings $u_{R_k}$ which converges in $C^\alpha_{loc}(B_1)$ to a limiting function $\bar v \in H^1_{loc}(B_1)$:
$$v_k:= u_{R_k}, \quad v_k \to v_\infty \quad \mbox{in} \quad C^\alpha_{loc}(B_1).$$
The limiting harmonic function $v_\infty$ satisfies $v_{\infty}(0)=0$, $|v_\infty| \le 1$.

\smallskip

{\it Claim}: $v_\infty$ is harmonic in $B_1$.

\smallskip

Assume by contradiction that $v_\infty$ is not harmonic. Then we can find a ball $B_r(x_0) \subset B_1$ (say $x_0=0$ for simplicity of notation), such that the harmonic replacement of $v_\infty$, denoted by $\bar v$ is not identically zero, and
$$\int_{B_r} |\nabla \bar v|^2dx \le \int_{B_r} |\nabla v_\infty|^2 dx - \sigma,$$
for some small $\sigma>0$. From \eqref{wr} we see that the $W_{R_k}$'s are uniformly bounded and converge pointwise to 0 except at the origin. Since $\bar{v}$ is almost everywhere non-zero in $B_{r}$, by Lebesgue dominated convergence theorem we have
\begin{equation}\label{bsr}
 \int_{B_r}{W}_{R_{k}}(\bar{v})dx \to  0 \quad \mbox{as $k \to \infty$}.
 \end{equation}
We consider a competitor $w_k$ for $v_k$ in $B_{2r}$ obtained by interpolating between $\bar v$ and $v_k$ in a small neighborhood outside $B_r$,
$$w_k:=\varphi \cdot \bar{v}+(1-\varphi) \cdot v_{k},$$
where $\varphi$ is a cutoff function which is 1 in $B_{r_{1,k}}$ and $0$ in $B_{r_{2.k}}$. Here the radii $r_{1,k}$ and $r_{2,k}$ are chosen such that
$$\int_{B_{r_{2,k}} \setminus B_{r_{1,k}} } |\nabla \bar v|^2 + |\nabla v_k|^2 dx  \le \frac \sigma 4,$$
and
 $$r \le r_{1,k} \le r_{2,k} \le r+\eta, \quad \quad r_{2,k}-r_{1,k} \ge c(\sigma,\eta),$$ 
for some fixed $\eta>0$ small. Using that $v_k \to \bar v$ uniformly outside $B_r$, and that $|W_R| \le C$, as in Lemma \ref{gluing} it is easy to check that for all $k$ large
$$J_{R_k}(w_k,B_{r+\eta}) \le J_{R_k} (\bar v, B_{r+\eta}) + J_{R_k}(v_k,B_{r+\eta} \setminus B_r) + \frac{\sigma}{4} + C |B_{r+\eta} \setminus B_r|.$$
The minimality of $v_k$ then implies
$$J_{R_k}(v_k,B_r) \le J_{R_k} (\bar v, B_{r+\eta}) + \frac{\sigma}{4} + C |B_{r+\eta} \setminus B_r|.$$
We let $k \to \infty$, use \eqref{bsr}, and then let $\eta \to 0$ to obtain
$$ \int_{B_r} \frac 12 |\nabla v_\infty|^2 dx \le  \int_{B_{r}} \frac 12 |\nabla \bar v|^2 dx + \frac{\sigma}{4}.$$
This is a contradiction and the claim is proved.

\smallskip

 We distinguish two cases:

{\it Case 1:} There exists a limiting function sequence $R_k \to \infty$ for which the corresponding limiting function $v_\infty$ satisfies $\nabla v_{\infty}(0) \ne 0$. 
Assume that $\nabla v_\infty (0)=a e_n$ for some $a>0$. Since $v_k$ converges uniformly to $$v_\infty(x)= a x_n + O(|x|^2),$$
we obtain the conclusion of the proposition in balls of radii $\rho_k R_k$ with $\rho_k$ a sequence that decreases slowly to $0$.
 
\smallskip

{\it Case 2:} All limiting functions $v_\infty$ satisfy $\nabla v_\infty(0)=0$. We first show that 
\begin{equation}\label{32}
M(R) \ge R^{3/2} \quad \mbox{for all large $R$.} 
\end{equation} 
By compactness, all rescaled functions $u_R$ for large $R$ are well approximated by a limiting harmonic function $v_\infty$, and the Case 2 assumption gives
$$|v_\infty(x)| \le C |x|^2 \quad \quad \mbox{in $B_1$,}$$
with $C$ depending only on $n$. This shows that 
$$\|u_R\|_{L^\infty(B_\rho)} \le \rho^{7/4}, $$
with $\rho$ a small fixed universal constant, hence $$M(\rho R) \le \rho^{7/4} M(R),$$
for all large $R$'s, which implies the claim \eqref{32}.

In view of \eqref{32}, the rescaled potential $W_R$ (see \eqref{wr}) satisfies
$$\|W_R\|_{L^\infty} \le R^{-1}.$$
This means that $u_R$ is better approximated by a harmonic function. Indeed, let $v_R$ be the harmonic function in $B_1$ with boundary data $u_R$. The minimality of $u_R$ for the rescaled energy $J_R$ gives
$$ \int_{B_1}\frac 12 |\nabla (u_R-v_R)|^2 dx \le \int_{B_1}W_R(v_R)-W_R(u_R) dx \le R^{-1}.$$
Using the uniform H\"older continuity of $u_R$ and $v_R$ in $B_{1/2}$, we find
$$|u_R-v_R| \le R^{-\sigma} \quad \mbox{in} \quad B_{1/2},$$
for some $\sigma>0$ small, universal. From the $C^3$ estimates for $v_R$, and since $$u_R(0) = O(M(R)^{-1}),$$ we obtain
$$|u_R- p_R\cdot x - \frac 12 x^T A_R x| \le C (R^{-\sigma} +|x|^3) \quad \mbox{in} \quad B_{1/2},$$
for some $p_R \in \R^n$, $A_R \in \R^{n \times n}$ with $|p_R|, \|A_R\| \le C$. 

If $|p_{R}| \ge R^{-\sigma/4}$ for a sequence of $R=R_k \to \infty$, then in the ball $B_\rho$ of radius $\rho=R^{-\sigma/2}$ we have
$$|u_R-p_R \cdot x| \le C \rho^2 \le \varepsilon  \cdot |p_R| \rho , \quad \quad \varepsilon:=C R^{-\sigma/4} \to 0,$$
and this implies the desired conclusion for $u$ in balls of radius $\rho R=R^{1-\sigma/2}$.

Next, let's suppose that $|p_{R}| \le R^{-\sigma/4}$ for all large $R$'s, and show this contradicts assumption \eqref{mr}. The inequality above implies
$$|u_R - \frac 12 x^T A_R x| \le C (R^{-\sigma/4} +|x|^3) \quad \mbox{in} \quad B_{1/2}.$$

Now we set $\rho=R^{-\sigma/12}$ and have that
\begin{equation}\label{car}
|u_R-\frac 12 x^TA_R x| \le C \rho^3 \quad \mbox{in} \quad B_{2\rho}.
\end{equation}
If $\|A_R\| \le c$ with $c$ small, then $|u_R| \le \rho^2$ in $B_\rho$ which means 
\begin{equation}\label{car2}
\frac{M(r)}{r^2}  \le \frac{M(R)}{R^2}, \quad \quad \mbox{with} \quad r:= \rho R=R^{1-\sigma/12}.
\end{equation}
If $\|A_R\| \ge c$ then \eqref{car} implies
\begin{equation}\label{car3}
\frac{M(r)}{r^2} \le (1+ C\rho)\frac {M(2r)}{(2r)^2}.
\end{equation}
In conclusion for all large values of $r$ we have that either \eqref{car2} holds with $R=r^{1+\mu_1}$ for some constant $\mu_1>0$, or \eqref{car3} holds with $\rho=r^{-\mu_2}$ for some $\mu_2>0$. This property easily implies that $M(r)/r^2$ is bounded below along a sequence of $r=r_k \to \infty$, which contradicts our assumption \eqref{mr}.

\end{proof}

\subsection{Improvement of flatness}

Whenever we have a function $u$ that is well approximated by a one dimensional solution $U_a$ in $B_R$
\begin{equation}\label{flat4}
U_a(x_n - \eps R) \le u(x) \le U_a(x_n + \eps R), \quad \mbox{in $B_R$},
\end{equation}
we define its rescaling $\tilde u$ in $B_1$ as
$$ \tilde u(x):=  \frac {1}{\eps} \left[U_{a}^{-1}(u(R x)) -x_n\right], \quad \quad \quad x\in B_1.$$
Equivalently $\tilde u$ is defined by the formula
\begin{equation}\label{u(x)2}
u(x) = U_{a} \left(x_n + \tilde u( \frac {x}{R})\cdot \eps R \right), \quad \quad x \in B_{R}.
\end{equation}
In terms of the rescaling $\tilde u$, \eqref{flat4} is equivalent to
$$ |\tilde u| \le 1 \quad \mbox{in} \quad B_1.$$
If $a \ge \delta$ for some small parameter $\delta$, then $U_a$ is linear outside a compact interval whose length depends on $\delta$ and \eqref{flat4} guarantees that $\tilde u$ is harmonic in $B_1$ outside a small strip around $x_n=0$. Precisely, if we assume that the translation parameter $\eps R \ge \mu$ then we find
$$ \triangle \tilde u =0 \quad \mbox{in} \quad \{|x_n| \ge C(\delta,\mu) \, \eps \} \cap B_1.$$

We will show that $\tilde u$ is well approximated by a harmonic function in the whole domain $B_1$ as we let $\eps \to 0$. For this we use the explicit family of comparison functions constructed in Lemma \ref{l210}, and the behavior of their rescalings near $x_n=0$. 

\begin{lem}\label{phis} Fix $\delta$ and $\mu$ two small parameters. Let $a \ge \delta$, and let $P$ be a quadratic polynomial of the form, 
\begin{equation}\label{P}
P(x)= p + q \cdot x - \frac K 2 |x'|^2, \quad \quad K \ge \mu,
\end{equation}
with coefficients bounded by $\mu^{-1}$. 
There exists a subsolution $\Phi_P$ to \eqref{AC} which is an approximation of the one dimensional solution $U_a$ in $B_R$ such that its rescaling $\tilde \Phi_P$ defined in $B_1$ by \eqref{u(x)2} satisfies 
$$ \tilde \Phi_P = P(x) + O(x_n^2) + O(\eps),$$
provided that $$\eps R \ge \mu \quad \mbox{and} \quad  \eps \le \eps_0(\delta,\mu).$$ Here $\eps_0$ is sufficiently small depending on $\delta$, $\mu$, $n$ and $\|W\|_{L^\infty}$, and the constants in $O(\cdot)$ depend on the same quantities as well.
\end{lem}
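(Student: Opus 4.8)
The plan is to build $\Phi_P$ as a radial subsolution attached to a large sphere, in the spirit of Lemma \ref{l210}, incorporating the three parts of $P(x)=p+q\cdot x-\tfrac K2|x'|^2$ by three separate devices. Write $q=q_ne_n+q'$ with $q'\perp e_n$. The quadratic term $-\tfrac K2|x'|^2$ should come from the curvature of the sphere, so I would take the sphere radius to be $\rho:=R/(\eps K)$; the constant and horizontal linear terms $p+q'\cdot x'$ should come from shifting and tilting the sphere, so I would center it at $z:=(\rho-\eps Rp)\,\omega$ with $\omega:=(e_n+\eps q')/|e_n+\eps q'|$; and the vertical slope $q_nx_n$ should come from using the one‑dimensional solution $U_{\hat a}$ of the modified slope $\hat a:=(1+\eps q_n)a$ in place of $U_a$. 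Concretely I would set $\Phi_P(x):=\hat g(\rho-|x-z|)$, where $\hat g=U_{\hat a}\circ\tau$ and $\tau(t)=t+O(1/\rho)\,t^2$ is the quadratic reparametrization coming from the proof of Lemma \ref{l210}, now run with the rescaled potential $W_{\hat a}(t)=\hat a^{-2}W(\hat a t)$ and with the mean‑curvature term $\tfrac{n-1}{|x-z|}$ of $\partial B_\rho(z)$ in the role played there by $\tfrac{C_0}{R}g'$.

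The first step is the subsolution property. After translating by $z$ and rotating $\omega$ to $e_n$ — isometries that preserve \eqref{AC} exactly — the function $\Phi_P$ becomes precisely a function of the form treated in Lemma \ref{l210}, namely $g(\rho-|x|)$ with $g$ built from $U_{\hat a}$, with $\rho$ playing the role of the sphere radius there (so its ``$\eps$'' equals $R/\rho=\eps K\le\eps_0\mu^{-1}$, which is small), and with its scale parameter equal to $R\ge\mu/\eps\ge\mu/\eps_0\ge R_0(\delta)$ once $\eps_0$ is chosen small depending on $\delta,\mu$. Replacing $a$ by $\hat a$ is harmless: it merely substitutes one genuine one‑dimensional solution for another, and $W_{\hat a}$ stays bounded and compactly supported in terms of $\delta$ and $\|W\|_{L^\infty}$ since $\hat a=(1+\eps q_n)a\in[\delta/2,2a]$. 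Thus Lemma \ref{l210} gives $\triangle\Phi_P>W'(\Phi_P)$ on its domain, and using $\eps|p|\le\tfrac12$ one checks that for $x\in B_R$ the argument $\rho-|x-z|$ stays inside the interval on which $\hat g$ is defined.

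The heart of the proof is the expansion of $\tilde\Phi_P$. Writing $\sigma:=\rho-\eps Rp=\rho(1+O(\eps^2))$, and using $\omega=e_n+\eps q'+O(\eps^2)$, $\sigma^{-1}=\tfrac{\eps K}{R}(1+O(\eps^2))$ and $|(Ry)^{\perp\omega}|^2=R^2|y'|^2+O(\eps R^2)$, I would get for $x=Ry$, $|y|\le1$,
$$\rho-|Ry-z|=\eps Rp+Ry\cdot\omega-\frac{|(Ry)^{\perp\omega}|^2}{2\sigma}+O\!\Big(\frac{R^3}{\sigma^2}\Big)=Ry_n+\eps R\Big(p+q'\cdot y'-\frac K2|y'|^2\Big)+O(\eps^2R).$$
Applying $\tau$ adds only $O(1/\rho)(Ry_n+O(\eps R))^2=\eps R\,O(y_n^2)+O(\eps^2R)$, so $\Phi_P(Ry)=U_{\hat a}\big(Ry_n+\eps R(p+q'\cdot y'-\tfrac K2|y'|^2)+\eps R\,O(y_n^2)+O(\eps^2R)\big)$. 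Now I would invoke \eqref{usigma} with $\gamma:=1+\eps q_n$: since $\eps\le\eps_0(\delta,\mu)$ keeps $\gamma\in(\tfrac12,2)$ and $\hat a=\gamma a\ge\delta/2$, the relation $U_a^{-1}\circ U_{\hat a}(t)=\gamma t+O(\gamma-1)$ holds with uniform constants, and multiplying the displayed argument by $\gamma$ turns $Ry_n$ into $Ry_n+\eps Rq_ny_n$, restoring exactly the missing vertical‑linear term of $P$. This yields
$$U_a^{-1}\big(\Phi_P(Ry)\big)=Ry_n+\eps R\,P(y)+\eps R\,O(y_n^2)+O(\eps^2R)+O(\eps),$$
and dividing by $\eps R$ and recalling the definition of $\tilde\Phi_P$ from \eqref{u(x)2} gives $\tilde\Phi_P(y)=P(y)+O(y_n^2)+O(\eps)+O(1/R)$.

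The only genuinely delicate point, and the one I expect to be the main obstacle, is the error $O(\gamma-1)$ forced by \eqref{usigma}: it is an additive $O(\eps)$ inside the argument of $U_a$, hence an additive $O(1/R)$ in $\tilde\Phi_P$, which is not in general $O(\eps)$. This is exactly where the standing hypothesis $\eps R\ge\mu$ enters, giving $1/R\le\eps/\mu$ and absorbing the term into $O(\eps)$; the remaining errors $O(\eps^2R)/(\eps R)$ and $O(\rho^{-1}R^2y_n^2)/(\eps R)$ are harmless, being $O(\eps)$ and $O(y_n^2)$ respectively, with all constants depending only on $\delta$, $\mu$, $n$ and $\|W\|_{L^\infty}$. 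Beyond this, the only real work is to run the three perturbations simultaneously without spoiling either the subsolution inequality or the orders of the error terms; but since the shift and tilt are isometries and the slope change merely trades $U_a$ for $U_{\hat a}$, the subsolution part reduces cleanly to Lemma \ref{l210}, and all the difficulty concentrates in the Taylor expansion above together with the single application of \eqref{usigma}.
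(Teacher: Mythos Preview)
Your proposal is correct and follows essentially the same route as the paper. The only cosmetic difference is that the paper first translates in the $x'$ direction (completing the square in $-\tfrac K2|x'|^2$) to reduce to the case $q'=0$, while you handle $q'$ by tilting the sphere's axis to $\omega=(e_n+\eps q')/|e_n+\eps q'|$; both are isometries, and the remaining ingredients---Lemma~\ref{l210} with parameters $\bar\eps=K\eps$ and $\bar a=(1+\eps q_n)a$ for the subsolution, the Taylor expansion of the signed distance to the large sphere, the relation \eqref{usigma} to pass from $U_{\bar a}$ back to $U_a$, and the absorption of the resulting additive $O(\eps)$ error via the hypothesis $\eps R\ge\mu$---are exactly the paper's argument.
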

     
   \begin{proof} After a translation in the $x'$ direction we may assume that $P$ is of the form
   $$P= p + qx_n - \frac K 2 |x'|^2,$$
   with $q \in \R$. Let $\Phi$ be the subsolution constructed in Lemma \ref{l210} with $\eps$ replaced by $\bar \eps=K \eps$ and $a$ replaced by $\bar a=a(1+ \eps q)$. After a translation of vector $(R/ \bar \eps -pR \eps) e_n$, the function $\Phi$ can be written 
   $$\Phi (x) =U_{\bar a} \circ \tau (d), \quad \tau(d)=d+ O(\frac{\eps}{R})d^2$$
   where $d$ is the signed distance to the sphere of radius $R/\bar \eps$ centered at $(R/ \bar \eps -p R \eps) e_n$, positive inside the ball and negative outside. Since by \eqref{usigma}
   $$U_a^{-1} \circ U_{\bar a}(t)=(1+\eps q) t + O(\eps q),$$
   we have
   $$ \Phi(x)=U_a \left ((1+\eps q) d +O(\frac{\eps}{R})d^2 + O(\eps)\right).$$
  Using that in $B_R$
   \begin{align*}
   d &= x_n + p R \eps - \frac {K \eps}{2R} |x'|^2 + R\cdot O(\eps^2) \\
   & = x_n + R \cdot O(\eps),
   \end{align*}
   we find
   $$U_a^{-1} \circ \Phi =  x_n + \left(p + q \frac{x_n}{R} - \frac K2 |\frac{x'}{R}|^2 + O\left((\frac{x_n}{R})^2\right) + O(\eps)\right)\cdot \eps R + O(\eps).$$
   Since $\eps R \ge \mu$, the last term $O(\eps)$ can be absorbed into $O(\eps) \cdot \eps R$ and the lemma is proved by recalling the definition of $\tilde \Phi$ from \eqref{u(x)2}.

   \end{proof}

Next we prove a version of the Harnack inequality for the rescaling $\tilde u$.

\begin{lem} [Harnack inequality]  \label{Hi} Fix $\delta,\mu>0$. There exist small constants $\eps_0$ depending on $\delta$, $\mu$, $n$ and $\|W\|_{L^\infty}$ such that if 
       \begin{equation*}
        U_{a}(x_{n})\leq u(x) \quad \mbox{ in }\quad B_{R}, \quad \mbox{and}  \quad U_{a}(\bar x_{n}+\sigma)\leq u(\bar x) \quad \mbox{at} \quad \bar x=\frac R 2 e_n,
         \end{equation*}
        with 
        \begin{equation*}\label{cond}
        a\ge \delta, \quad \quad \mu \le \sigma \le \eps_0R,
        \end{equation*}
   then 
    \begin{equation*}
        U_{a}(x_{n}+c \sigma )\leq u(x) \quad \mbox{ in }\quad B_{R/2},
    \end{equation*}
    for some constant $c$ depending only on $n$.
\end{lem}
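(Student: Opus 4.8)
The plan is to rescale, reduce the statement to a Harnack inequality for the normalized function $\tilde u$, and prove the latter by sliding the bent subsolutions of Lemma~\ref{phis} against $u$. First I would take $\eps:=\sigma/R$, so that $\eps R=\sigma\ge\mu$ and $\eps=\sigma/R\le\eps_0$; in particular Lemma~\ref{phis} is available with this $\eps$. Define $\tilde u$ on $B_1$ by the relation $u(Rx)=U_a(Rx_n+\tilde u(x)\,\eps R)$ from \eqref{u(x)2}. Since $U_a$ is strictly increasing, the hypothesis $U_a(x_n)\le u$ in $B_R$ becomes $\tilde u\ge 0$ in $B_1$, and, using $\eps R=\sigma$, the hypothesis $U_a(\bar x_n+\sigma)\le u(\bar x)$ at $\bar x=\tfrac R2 e_n$ becomes exactly $\tilde u(\tfrac12 e_n)\ge 1$; likewise the desired conclusion $U_a(x_n+c\sigma)\le u$ in $B_{R/2}$ is equivalent to $\tilde u\ge c$ in $B_{1/2}$, which is what I would prove. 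Note that the hypotheses force $R$ to be large, so at $\tfrac12 e_n$ we have $u\ge U_a(R/2+\sigma)>1$; hence $\tfrac12 e_n$ belongs to the open set $\{u>1\}$, on which $U_a$ is affine and therefore $\tilde u$ is harmonic.

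Conceptually, the statement is a Harnack inequality: writing $t=x_n+\tilde u\,\eps R$, the identities $\triangle u=W'(u)=U_a''(t)$ and $\triangle u=U_a''(t)\,|e_n+\eps R\nabla\tilde u|^2+U_a'(t)\,\eps R\,\triangle\tilde u$ yield
\[
\triangle\tilde u=\frac{U_a''(t)}{U_a'(t)}\bigl(-2\partial_n\tilde u-\eps R\,|\nabla\tilde u|^2\bigr),
\]
an elliptic equation for $\tilde u$ with no zeroth order term (recall $U_a'\ge a\ge\delta$, $U_a''=W'(U_a)$, and the right-hand side is supported near $\{|u|=1\}$). So $\tilde u\ge 0$ should morally obey the Harnack inequality, and the lemma would follow from $\tilde u(\tfrac12 e_n)\ge 1$. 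However, making this rigorous would require an a priori Lipschitz bound for $u$ near the interface $\{|u|=1\}$, which is not available in this generality; this is precisely why one argues through the explicit barriers of Lemma~\ref{phis}, which only rely on the comparison principle.

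For the barrier argument, the subsolutions $\Phi_P$ of Lemma~\ref{phis}, with rescalings $\tilde\Phi_P=P+O(x_n^2)+O(\eps)$ and $P$ as in \eqref{P}, serve as lower barriers for $u$: if $\Phi_P\le u$ on $\partial\Omega$ for a subdomain $\Omega\subset B_R$, then $\Phi_P\le u$ in $\Omega$, since at a first interior contact point $x_0$ one would have $\Phi_P(x_0)=u(x_0)$ and $D^2\Phi_P(x_0)\le D^2u(x_0)$, hence $\triangle\Phi_P(x_0)\le\triangle u(x_0)=W'(u(x_0))=W'(\Phi_P(x_0))$, contradicting $\triangle\Phi_P>W'(\Phi_P)$; moreover the profile of $\Phi_P$ solves a translation invariant ODE, so raising $\Phi_P$ (increasing the parameter $p$) preserves the strict subsolution property, allowing it to be slid. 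I would first use the ordinary interior Harnack inequality in the harmonic region $\{u>1\}$ to upgrade $\tilde u(\tfrac12 e_n)\ge 1$ to a lower bound $\tilde u\ge c_1$ on the whole ``upper'' part $\{x_n\ge h\}\cap B_{3/4}$ for a small fixed $h$. Then, for each target point $y$ in the remaining part of $B_{1/2}$, I would choose the coefficients $(p,q,K)$ of a subsolution $\Phi_P$ (with $K\ge\mu$ and coefficients bounded by $\mu^{-1}$) and a comparison domain $\Omega$ — presumably a thin cylinder straddling $\{x_n=0\}$ with axis through $y$ — so that on $\partial\Omega$ the rescaled barrier $\tilde\Phi_P$ lies below $c_1$ where $\tilde u\ge c_1$ and below $0$ where only $\tilde u\ge 0$ is known, while $\tilde\Phi_P(y)\ge c>0$. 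Sliding $\Phi_P$ up to the first contact with $u$, which cannot be interior, then forces $\tilde u(y)\ge\tilde\Phi_P(y)\ge c$; ranging $y$ over $B_{1/2}$ gives the conclusion.

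The step I expect to be the main obstacle is this last quantitative construction. The paraboloid $-\tfrac K2|x'|^2$ in $P$ must be taken large enough to keep $\tilde\Phi_P\le 0$ on the lateral part of $\partial\Omega$, where only $\tilde u\ge 0$ is available; yet the error terms $O(x_n^2)+O(\eps)$ of Lemma~\ref{phis}, whose constants depend on $\delta$ and $\mu$, must remain negligible on $\Omega$, which forces $\Omega$ to be thin in the $x_n$ direction; while $\Omega$ must still reach high enough that its top lies in the harmonic region $\{u>1\}$ and that $\tilde\Phi_P(y)$ stays above a positive constant. Reconciling these competing demands, using only the single quantitative input $\tilde u(\tfrac12 e_n)\ge 1$ together with the one-sided bound $\tilde u\ge 0$, is the crux of the argument.
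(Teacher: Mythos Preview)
Your rescaling and reformulation are exactly the paper's: set $\eps=\sigma/R$, pass to $\tilde u$ via \eqref{u(x)2}, so the claim becomes $\tilde u\ge 0$ in $B_1$, $\tilde u(\tfrac12 e_n)\ge 1\Rightarrow\tilde u\ge c$ in a fixed smaller ball (the paper takes $B_{1/8}$). The use of the classical Harnack inequality in the harmonic region $\{x_n\ge C(\delta)\eps\}$ to obtain $\tilde u\ge c_1$ on $B_{1/4}(\tfrac12 e_n)$, with $c_1=c_1(n)$, is also the paper's first move. Your identification of Lemma~\ref{phis} as the decisive tool is correct.

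The genuine gap is the step you yourself flag as ``the main obstacle''. The per--point cylinder scheme cannot be closed as written: in order to reach the region where $\tilde u\ge c_1$ is known, the comparison cylinder must have height of order one in $x_n$, and then the $O(x_n^2)$ error in $\tilde\Phi_P$ from Lemma~\ref{phis} (whose implicit constant depends on $\delta,\mu$) is itself of order one and swamps the construction. The competing demands you list are real and are not overcome by tuning $(p,q,K)$.

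The paper resolves this by \emph{not} using $\tilde\Phi_P$ as a global barrier. Instead one compares $\tilde u$ with $c'\Gamma$ on the full annulus $B_1\setminus B_{1/4}(\tfrac12 e_n)$, where
\[
\Gamma(x):=\bigl(|x-\tfrac12 e_n|^{2-n}-(3/4)^{2-n}\bigr)+c_0\,x_n^+,
\]
with $c_0,c'$ small constants depending only on $n$. Away from $\{x_n=0\}$ both $\tilde u$ and $\Gamma$ are harmonic, so $\tilde u-c'\Gamma$ cannot have a negative interior minimum in $\{|x_n|\ge C(\delta,\mu)\eps\}$ by the ordinary maximum principle. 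If the minimum falls in the thin strip $\{|x_n|<C(\delta,\mu)\eps\}$, then $\tilde u$ is touched from below there by $c'\Gamma+\text{const}$; the kink $c_0\,x_n^+$ makes $\Gamma$ strictly subharmonic across $\{x_n=0\}$, and this extra room is precisely what lets one fit, \emph{locally near the touching point}, a polynomial $P$ of the form \eqref{P} underneath $c'\Gamma$ and invoke Lemma~\ref{phis} to produce a $\tilde\Phi_P$ touching $\tilde u$ from below---contradicting the strict subsolution property of $\Phi_P$. The point is that Lemma~\ref{phis} is applied only in the strip, where $x_n^2=O(\eps^2)$ and the error term is harmless; the large--scale propagation of positivity is carried entirely by the single harmonic barrier $\Gamma$. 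This separation of scales is what dissolves the ``competing demands'' in your approach.
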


As a consequence of the Lemma \ref{Hi} we have the following result.
\begin{cor}\label{chi} Assume that      
\begin{equation*}
        U_{a}(x_{n} + \beta_1)\leq u(x)\leq U_{a}(x_{n}+\beta_2)\quad \mbox{ in }\quad B_{R},
         \end{equation*}
   and $$a \ge \delta, \quad \mu \le \beta_2-\beta_1 \le \eps_0 R.$$ 
   Then 
    \begin{equation*}
        U_{a}(x_{n}+\bar \beta_{1})\leq u(x)\leq U_{a}(x_{n}+\bar \beta_{2})\quad \mbox{ in }\quad B_{R/2},
    \end{equation*}
    with $$\bar \beta_{2}- \bar \beta_{1}\le (1-c)(\beta_2-\beta_1).$$
    \end{cor}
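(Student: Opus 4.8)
The plan is to obtain Corollary~\ref{chi} from a single application of the Harnack inequality of Lemma~\ref{Hi} at the ``top'' point $\bar x:=\tfrac R2 e_n\in\p B_{R/2}$, after a routine dichotomy. Along the way I will use Lemma~\ref{Hi} in a few cosmetically modified forms: with the barrier $U_a(x_n)$ replaced by a vertical translate $U_a(x_n+t_0)$; and with all inequalities reversed (an ``upper barrier'' version). Each such variant is proved by the \emph{same} comparison argument as Lemma~\ref{Hi} itself, since the explicit sub/supersolutions produced in Lemmas~\ref{l210} and~\ref{phis} are covariant under vertical translations and under the reflection $u\mapsto -u$, $x_n\mapsto -x_n$, $W\mapsto W(-\cdot)$ (the reflected potential $W(-\cdot)$ still belongs to the class considered in Section~2). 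Concretely, the upper-barrier form states: if $u\le U_a(x_n+\beta)$ in $B_R$ and $u(\bar x)\le U_a(\bar x_n+\beta-\sigma)$ with $\mu\le\sigma\le\eps_0 R$, then $u\le U_a(x_n+\beta-c\sigma)$ in $B_{R/2}$.

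Granting these, the argument runs as follows. Because $R$ is large, $U_a$ is linear of slope $a$ near $x_n=R/2$, and the hypothesis forces $u(\bar x)\in[U_a(\tfrac R2+\beta_1),\,U_a(\tfrac R2+\beta_2)]$; since $U_a$ is strictly increasing we may write $u(\bar x)=U_a(\bar x_n+\sigma)$ for a unique $\sigma\in[\beta_1,\beta_2]$. Put $m:=\tfrac12(\beta_1+\beta_2)$. If $\sigma\ge m$, then at $\bar x$ the function $u$ lies above the profile $U_a(x_n+\beta_1)$ by the shift $\sigma-\beta_1$, which obeys $\tfrac12(\beta_2-\beta_1)\le\sigma-\beta_1\le\beta_2-\beta_1\le\eps_0 R$; combined with the global lower bound $u\ge U_a(x_n+\beta_1)$ in $B_R$, the translated Lemma~\ref{Hi} yields $u\ge U_a\bigl(x_n+\beta_1+c(\sigma-\beta_1)\bigr)\ge U_a\bigl(x_n+\beta_1+\tfrac c2(\beta_2-\beta_1)\bigr)$ in $B_{R/2}$, while the upper bound $u\le U_a(x_n+\beta_2)$ persists there. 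Thus the conclusion holds with $\bar\beta_1=\beta_1+\tfrac c2(\beta_2-\beta_1)$ and $\bar\beta_2=\beta_2$. If instead $\sigma\le m$, the mirror argument via the upper-barrier form, applied with $\beta=\beta_2$ and shift $\beta_2-\sigma\ge\tfrac12(\beta_2-\beta_1)$, gives $u\le U_a\bigl(x_n+\beta_2-\tfrac c2(\beta_2-\beta_1)\bigr)$ in $B_{R/2}$, and we conclude with $\bar\beta_1=\beta_1$, $\bar\beta_2=\beta_2-\tfrac c2(\beta_2-\beta_1)$. In either case $\bar\beta_2-\bar\beta_1=(1-\tfrac c2)(\beta_2-\beta_1)$, which is the claim (the constant in Corollary~\ref{chi} being half of that in Lemma~\ref{Hi}). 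The only constant needing attention is $\mu$: the shift manufactured in each case is only guaranteed to be $\ge\tfrac12(\beta_2-\beta_1)\ge\tfrac\mu2$, so one should invoke Lemma~\ref{Hi} with $\mu/2$ in place of $\mu$, at the cost of shrinking $\eps_0$ slightly.

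I expect the case split and the bookkeeping of $\bar\beta_1,\bar\beta_2$, including the check that the manufactured shift lies in the admissible window, to be entirely routine. The one genuinely substantive point is that Corollary~\ref{chi} needs not only Lemma~\ref{Hi} as displayed but also its vertically translated and sign-reversed avatars; establishing these is where a little care goes, but since Lemma~\ref{Hi} is proved by comparison with the explicit, manifestly sign- and translation-covariant barriers of Lemmas~\ref{l210}--\ref{phis}, its proof transfers verbatim and no new difficulty arises.
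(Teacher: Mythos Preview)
Your argument is correct and is essentially what the paper has in mind. The paper's own proof is only two sentences: it first separates off the situation $|\beta_1|\ge \tfrac34 R$, where the transition strip $\{|u|<1\}$ lies outside $B_{5R/8}$ so that $u-U_a(x_n+\beta_1)$ is harmonic and classical Harnack already gives the oscillation decay; otherwise it says ``after a translation'' one lands in the setting of Lemma~\ref{Hi}. Your write-up makes explicit the standard dichotomy (compare $u(\bar x)$ to the midpoint and improve either the lower or the upper barrier) and the need for translated and reflected versions of Lemma~\ref{Hi}, all of which the paper leaves implicit. The only point where your justification is a touch glib is the claim that the translated Lemma~\ref{Hi} holds with the \emph{same} test point $\bar x=\tfrac R2 e_n$: when $\beta_1\approx -R/2$ that point can sit in the transition strip, which is precisely why the paper peels off the case $|\beta_1|\ge\tfrac34 R$ via classical Harnack before translating. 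This is easily handled (translate in $x$ rather than only in the profile, or simply note that when the strip is near $\bar x$ one may pick a different interior test point and rerun the comparison with $\Gamma$), so there is no genuine gap.
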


If $|\beta_1| \ge \frac 3 4R$, it is simply a consequence of the classical Harnack inequality. Otherwise, after a translation, we end up in the situation of Lemma \ref{Hi} and apply its conclusion.
\begin{proof} [Proof of Lemma \ref{Hi}] We write $\sigma=\eps R$, and in terms of the rescaling $\tilde u$ defied in \eqref{u(x)2} the hypotheses can be rephrased as $$ \tilde u \ge 0 \quad \mbox{in} \quad B_1, \quad \quad \tilde u (\bar x_0) \ge 1, \quad \bar x_0:= e_n/2,$$
and we need to show that $\tilde u \ge c$ in say $B_{1/8}$. 

Recall that $\tilde u$ is harmonic in the set $x_n \ge C(\delta) \eps$, thus $u \ge c_1$ in $B_{1/4}(\bar x_0)$ by the classical Harnack inequality. 

It suffices to show that $\tilde u$ satisfies the comparison principle in $B_1 \setminus B_{1/4}(\bar x_0)$ with $c' \Gamma$, where $\Gamma$ is a slight modification of the fundamental solution with pole at $\bar x_0$ which vanishes on $\p B_{3/4}(\bar x_0)$,
$$ \Gamma:=\left(|x-\bar x_0|^{2-n} - (3/4)^{2-n} \right) + c_0 x_n^+,$$
with $c'$ and $c_0$ small constants that depend only on $n$. 
Indeed, since $\Gamma$ is harmonic away from $\{x_n=0\}$, it follows that $\tilde u- c' \Gamma$ cannot have an interior negative minimum in the region 
$$\{|x_n| \ge C(\delta,\mu) \eps\} \cap \left (B_1 \setminus B_{1/4}(\bar x_0)\right).$$
On the other hand if such a minimum occurs in the strip $|x_n| \ge C(\delta,\mu) \eps$ then $\tilde u$ can be touched by below at an interior point by a function $\tilde \Phi_P$ constructed in Lemma \ref{phis} for an appropriate polynomial $P$ and we reach a contradiction.  

\end{proof}

The Harnack inequality can be upgraded to an improvement of flatness result for solutions $u$.

\begin{prop}[Improvement of flatness]\label{impft}
    Fix $\delta>0$. There exists a constant $\eps_0$ small, depending on $\delta$, $n$ and $\|W\|_{L^\infty}$ such that if $u$ solves \eqref{AC}, $u(0)=0$, and
       \begin{equation}\label{flat1}
        U_{a}(x_{n}-\sigma)\leq u(x)\leq U_{a}(x_{n}+\sigma)\quad \mbox{ in }\quad B_{R},
         \end{equation}
        with $$a\ge \delta, \quad \delta \le \sigma \le \eps_0 R,$$
   then 
    \begin{equation}\label{flat2}
        U_{\bar a}(x \cdot \bar \nu - \bar \sigma ) \leq u(x)\leq U_{\bar a}(x \cdot \bar \nu + \bar \sigma)\quad \mbox{ in }\quad B_{\bar R},
    \end{equation}
    with $\bar \nu$ a unit direction and 
    $$\bar R = \rho R, \quad \quad \bar \sigma= \frac \rho 2 \sigma, \quad \quad |\bar \nu - e_n| + | \frac {\bar a}{ a} - 1|\leq C \frac \sigma R.$$
    Here $\rho$ small, $C$ large, are constants that depend only on $n$.
\end{prop}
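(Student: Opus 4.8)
The plan is a De~Giorgi-type improvement-of-flatness argument by compactness, based on the rescalings $\tilde u$ of \eqref{u(x)2}, the Harnack inequality of Corollary \ref{chi}, and the subsolution barriers of Lemma \ref{phis}. Fix $\delta>0$; we will exhibit a dimensional constant $\rho$ (the value $\rho=(12C_n)^{-1}$ is justified in the last step) together with some $\eps_0=\eps_0(\delta,n,\|W\|_{L^\infty})$ for which the implication holds. Arguing by contradiction, if no such $\eps_0$ existed there would be solutions $u_k$ of \eqref{AC} with $u_k(0)=0$, slopes $a_k\ge\delta$, and parameters $\delta\le\sigma_k\le k^{-1}R_k$ satisfying \eqref{flat1} but not \eqref{flat2}. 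Set $\eps_k:=\sigma_k/R_k$; then $\eps_k\to0$ and, since $\sigma_k\ge\delta$, also $R_k\to\infty$. Let $\tilde u_k$ be the rescaling of $u_k$ given by \eqref{u(x)2} with $a=a_k$, $R=R_k$, $\eps=\eps_k$: then \eqref{flat1} reads $|\tilde u_k|\le1$ in $B_1$, $u_k(0)=0$ gives $\tilde u_k(0)=0$, and $\tilde u_k$ is harmonic in $\{|x_n|\ge C(\delta)\eps_k\}\cap B_1$, with the complementary strip collapsing onto $\{x_n=0\}$. Iterating Corollary \ref{chi} at dyadic scales (and, after translations, around points of $\{x_n=0\}\cap B_{1/2}$), together with interior estimates for the harmonic function $\tilde u_k$ away from the shrinking strip, yields a uniform modulus of continuity for $\{\tilde u_k\}$ on compact subsets of $B_1$; hence, along a subsequence, $\tilde u_k\to w$ locally uniformly in $B_1$, with $|w|\le1$, $w(0)=0$, and $w$ harmonic in $B_1\setminus\{x_n=0\}$.

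\emph{The key step} is to show that $w$ is in fact harmonic in all of $B_1$. Since $w$ is harmonic off $\{x_n=0\}$ and continuous, it suffices to rule out a jump of its normal derivative across $\{x_n=0\}$, i.e. to show its distributional Laplacian carries no mass there. Suppose, say, that a quadratic polynomial $P$ of the form \eqref{P} touched $w$ strictly from below at an interior point $x^*\in\{x_n=0\}\cap B_1$. By Lemma \ref{phis}, for $k$ large the rescaled barrier satisfies $\tilde\Phi_P=P+O(x_n^2)+O(\eps_k)$; a standard sliding argument — translating $\Phi_P$ in the $e_n$-direction, which preserves the subsolution property, until its rescaling first touches $\tilde u_k$ — then produces an interior contact point near $x^*$ between $\tilde u_k$ and a rescaled subsolution, contradicting the comparison principle for $\Phi_P$ against the solution $u_k$. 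The reflected (supersolution) barriers rule out touching $w$ from above by such polynomials in the same way. The quadratic correction $-\tfrac K2|x'|^2$ in \eqref{P}, which encodes the mean curvature of the level sets of the genuine Allen--Cahn subsolution $\Phi_P$ and is not supplied by a merely subharmonic comparison function, is precisely what forces the two-sided balance $\triangle w=0$ rather than a one-sided inequality. This rigidity of the blow-up limit is the main obstacle; the remaining steps are soft.

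Once $w$ is known to be harmonic in $B_1$ with $|w|\le1$ and $w(0)=0$, interior estimates give $q:=\nabla w(0)$ with $|q|\le C_n$ and $|w(x)-q\cdot x|\le C_n|x|^2$ in $B_{1/2}$. Set $\rho:=(12C_n)^{-1}$; for $k$ large $|\tilde u_k-w|\le\rho^2$ in $B_\rho$, so $|\tilde u_k(x)-q\cdot x|\le2C_n\rho^2$ there. Unwinding \eqref{u(x)2}, for $y\in B_{\bar R_k}$ with $\bar R_k:=\rho R_k$ one has
$$U_{a_k}^{-1}\big(u_k(y)\big)=y_n+\eps_k\,q\cdot y+E(y)=(1+\eps_k q_n)\,y_n+\eps_k\,q'\cdot y'+E(y),\qquad |E(y)|\le2C_n\rho^2\eps_kR_k.$$
Writing $(1+\eps_k q_n)y_n+\eps_k q'\cdot y'=N_k\,(y\cdot\bar\nu_k)$ with $N_k:=\big((1+\eps_k q_n)^2+\eps_k^2|q'|^2\big)^{1/2}=1+\eps_k q_n+O(\eps_k^2)$ and $\bar\nu_k:=N_k^{-1}\big((1+\eps_k q_n)e_n+\eps_k q'\big)$, and changing the slope from $a_k$ to $\bar a_k:=N_k a_k$ via \eqref{usigma}, one obtains $U_{\bar a_k}^{-1}(u_k(y))=y\cdot\bar\nu_k+E'(y)$ with
$$|E'(y)|\le\tfrac{|E(y)|}{N_k}+C'\eps_k\le3C_n\rho^2\eps_kR_k+C'\eps_k\le\tfrac\rho2\,\eps_kR_k=\bar\sigma_k,$$
the last inequality holding for $k$ large because $R_k\to\infty$ and $\rho\le(12C_n)^{-1}$. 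Since $U_{\bar a_k}$ is increasing, this is exactly \eqref{flat2} in $B_{\bar R_k}$, with $\bar R_k=\rho R_k$, $\bar\sigma_k=\tfrac\rho2\sigma_k$, and $|\bar\nu_k-e_n|+|\bar a_k/a_k-1|\le C\,\sigma_k/R_k$ for a dimensional constant $C$, contradicting the failure of \eqref{flat2} for $u_k$.
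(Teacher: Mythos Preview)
Your proof is correct and follows essentially the same compactness strategy as the paper: rescale via \eqref{u(x)2}, extract a uniform limit using iterations of the Harnack inequality from Corollary~\ref{chi}, verify harmonicity of the limit across $\{x_n=0\}$ by comparison with the barriers $\tilde\Phi_P$ of Lemma~\ref{phis}, and then convert the resulting linear approximation back via \eqref{usigma}. The only cosmetic difference is that the paper phrases the viscosity test on $\{x_n=0\}$ with functions of the form $P+\eta|x_n|$ (invoking Hopf's lemma) rather than with $P$ alone, and is somewhat terser in the final slope--direction conversion, but the underlying argument is the same.
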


Our main result Theorem \ref{GM} part b) follows easily from Proposition \ref{impft}.
\begin{proof}[Proof of Theorem \ref{GM} part b)] After a translation we may assume that $u(0)=0$. Fix $\delta \ll \delta_0$, and by Proposition \ref{AL} we know that \eqref{flat1} is satisfied for a sequence of large $R$'s, with $\sigma= \eps_0 R$. We start with this configuration and apply Proposition \ref{impft} iteratively, as long as the hypotheses are satisfied. We obtain \eqref{flat2} in balls of radius 
$$\bar R_k=\rho^k R, \quad \mbox{ with} \quad \bar \sigma_k=\eps_0 2^{-k} \bar R_k,$$
and we have to stop the iteration when $\bar \sigma_k \le \delta$ for the first time. Notice that the lower bound on $\bar a_k$ is always satisfied since
$$ | \frac {\bar a_{k+1}}{ \bar a_{k}} - 1|\leq C 2^{-k} \eps_0, \quad \quad \bar a_0 \ge \delta_0.$$
The last value of $\bar R_k$ in our iteration tends to infinity as we let the original $R \to \infty$. Thus, in the limit we obtain the inequalities in \eqref{flat2} are valid in the whole space $\R^n$ with $\bar \sigma = \delta$ and for some values of $\bar a$ and $\bar \nu$. Now we let $\delta \to 0$ and reach the desired conclusion.

\end{proof}

\begin{proof}[Proof of Proposition \ref{impft}] The proof is by compactness. Assume by contradiction that there exists sequences of $a_k$, $\sigma_k$, $R_k$, $u_k$ such that $u_k(0)=0$,
$$ a_k \ge \delta, \quad \sigma_k \ge \delta, \quad \eps_k:=\frac{\sigma_k}{R_k} \to 0,$$
and \eqref{flat1} is satisfied, but the conclusion does not hold for some fixed constants $\rho$, $C$ that will be specified later. 

Let $\tilde u_k$ be the rescaled functions given by \eqref{u(x)2} and then the hypotheses imply
$$ |\tilde u_k| \le 1 \quad \mbox{in} \quad B_1, \quad \quad \tilde u_k(0)=0.$$
{\it Claim:} The rescaled functions $\tilde u_k$ converge uniformly on compact sets of $B_1$ along a subsequence to a harmonic function $v$ with $v(0)=0$.

\

We iterate Corollary \ref{chi} as long as the hypotheses are satisfied and each iteration provides the diminish of oscillation of the $\tilde u_k$ in dyadic balls. Notice that the number of iterations permitted tends to infinity as we let $\eps_k \to 0$ since we may take the parameter $\mu \to 0$ as well. As a consequence the modulus of continuity of the $\tilde u_k$ converges to a uniform H\"older modulus of continuity as $k \to \infty$. The uniform convergence to a limiting continuous function $v$ along subsequences is then a consequence of the Arzela-Ascoli theorem. Clearly $v$ is harmonic in $B_1$ away from $\{x_n=0\}$ since the $\tilde u_k$ are harmonic outside a strip $|x_n| \le C(\delta) \eps_k$.

It remains to show that $v$ is harmonic on $\{x_n=0\}$ in the viscosity sense. For this, due to Hopf lemma, it suffices to prove that $v$ cannot be touched locally by below (or above) at a point on $x_n=0$ by a function of the type
$$ P+ \eta |x_n|, \quad \mbox{with $\eta> 0$ (or $\eta<0$)},$$
with $P$ a polynomial as in \eqref{P}. This is indeed the case since the $\tilde u_k$'s (hence $v$ as well) satisfy the comparison with the functions $\tilde \Phi_P$ constructed in Lemma \ref{phis}, and the claim is proved.

\

Since the limiting function $v$ is harmonic, there are constants $\rho$, $C$ depending only on $n$ such that
$$|v- \xi \cdot x| \le \frac{\rho}{4} \quad \mbox{in} \quad B_\rho.$$
Using that $U_a$ is increasing and that $\tilde u_k$ converges uniformly to $v$, we obtain that 
$$U_{a_k}(x_{n} + \eps_k \xi \cdot x  - \frac \rho 3  \cdot t_k) \leq u_k(x)\leq U_{a_k}(x_{n} + \eps_k \xi \cdot x  + \frac \rho 3  \cdot t_k) \quad \quad \mbox{in} \quad B_{\rho R_k}.$$
Denote by $$f_k:=e_n + \eps_k \cdot \xi, \quad \quad \nu_k:=f_k/|f_k|, \quad |f_k|=1+O(\eps_k),$$
and recall that by \eqref{usigma} if $a \ge \delta$ and $\gamma \in ( \frac 12,2)$,
$$ U_{\gamma a} (s - C (\gamma-1)) \le U_a(\gamma s) \le U_{\gamma a} (s + C (\gamma-1)) \quad \quad \quad \forall \quad s \in \R,$$
with $C$ a constant that depends on $\delta$. Then it follows that
$$ U_{\bar a_k} (x \cdot \nu_k - \frac \rho 2  \cdot \sigma_k ) \leq u_k(x) \le U_{\bar a_k} (x \cdot \nu_k + \frac \rho 2  \cdot \sigma_k ) \quad \quad \mbox{in} \quad B_{\rho R_k},$$
with $\bar a_k=|f_k| a_k$. This means that $u_k$ satisfies the conclusion \eqref{flat2} and we reached a contradiction.

\end{proof}

%%%%%%%%%%%%%%%%%%%%%%%%%%%%

\section{Solutions with graphical level sets}

In this last section we use the results of Theorem \ref{GM} and prove a similar result in one dimension higher for critical points of $J$ that are monotone in one direction. Precisely, we consider monotone solutions of 
\begin{equation}\label{ACL}
\triangle u= W'(u)
\end{equation}
 with graphical $0$ level set i.e.
 \begin{equation}\label{ACL5}
 u_{x_n}>0, \quad \quad \mbox{$\{u=0\}$ is a graph over $\R^{n-1}$ in the $x_n$ direction.}
 \end{equation}
We recall from Section 2 that the potential $ W: \mathbb R \to [0, \infty)$ satisfies the following hypotheses:

a) $W=0$ outside the interval $[-1,1]$,

b) in the interval $[-1,1]$, $W$ is a $C^2$ function and
$$ W(\pm 1)=0, \quad W' (\pm 1)=0, \quad W''(\pm 1) >0,$$
$$ W'>0 \quad \mbox{in $(-1,0)$,} \quad W'<0 \quad \mbox{in $(0,1)$,} \quad W''(0)<0.$$ 
 
  \begin{thm}\label{Main3}
 Let $u$ be a global solution to \eqref{ACL} that satisfies \eqref{ACL5}. If $u =o(|x|^2)$ as $|x| \to \infty$, then $u$ is one-dimensional if $n \le 8$.
 \end{thm}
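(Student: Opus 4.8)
The plan is to reduce Theorem~\ref{Main3} to Theorem~\ref{GM} via the principle that a monotone solution is a global minimizer, recovering the extra dimension (from $n\le 7$ to $n\le 8$) from the graphicality of the zero level set through a blow-down argument that is only needed in the borderline dimension $n=8$ when $u$ is bounded on one side.

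\textbf{Step 1: $u$ is a global minimizer.} Since $u_{x_n}>0$, the vertical translates $u^t(x):=u(x+te_n)$ form an ordered family of solutions, and the hypothesis that $\{u=0\}$ is a graph over $\R^{n-1}$ guarantees that this family foliates the relevant portion of $\R^n$. By the standard foliation/calibration argument, $u$ then minimizes $J$ in every ball subject to its own boundary data, that is, $u$ is a global minimizer. Once this is known, Theorem~\ref{GM} applies directly whenever $n\le 7$ (case a), and also whenever $u$ is unbounded on both sides (case b, which carries no dimensional restriction); in those cases $u$ is one-dimensional and the proof is complete. It remains to treat $n=8$ with $u$ bounded on one side, say $\{u<-1\}=\emptyset$, so that $u>-1$ by the strong maximum principle.

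\textbf{Step 2: blow-down and dimension reduction.} The monotonicity $u_{x_n}>0$ forces every sublevel set $\{u<t\}$ to be a subgraph in the $e_n$ direction. Performing the blow-down analysis of Sections 3--4 at a sequence $R_k\to\infty$ --- via the classical Modica--Mortola $\Gamma$-convergence when $|u|\le 1$, and via Theorem~\ref{TGC} together with Corollary~\ref{c39} when $\{u>1\}\ne\emptyset$ --- produces a limiting minimizing cone: the reduced boundary of a minimizing set $E$ in the first case, and a global minimizing pair $(v,E)$ of the ACKS functional $\mathcal F$ with $0\in\p E$ in the second. In either case $E$ is again a subgraph of a $1$-homogeneous function. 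The crucial claim is that such a cone must be flat when $n=8$: $E$ is a half-space and $v\equiv 0$. Granting this, $\{u=0\}$ is asymptotically flat at infinity, and the proof concludes as before: when $|u|\le 1$, the improvement of flatness behind Theorem~\ref{DG} (see \cite{S1,S3}) upgrades asymptotic flatness of the level set to one-dimensional symmetry of $u$; when $\{u>1\}\ne\emptyset$, the flatness of $\{u\le u(0)\}$ forces $M(R):=\max_{B_R}(u-1)^+$ to grow faster than every power $R^\alpha$ with $\alpha<1$, contradicting the bound $M(R)\le CR^{1/2}$ of Lemma~\ref{one-phase holder}, so this subcase cannot occur. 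In all cases $u$ is one-dimensional.

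\textbf{Main obstacle.} The heart of the proof is the flatness claim for subgraphical minimizing cones in $\R^8$. For the perimeter part this is the classical Bernstein theorem for minimal graphs (entire solutions of the minimal surface equation over $\R^7$ are affine, by De~Giorgi, Almgren, Simons and Bombieri--De~Giorgi--Giusti), equivalently the absence of singular minimizing graphical cones through dimension $8$. For the ACKS functional one needs the analogous statement; it should follow by Federer dimension reduction based on the Weiss-type monotonicity formula of \cite{DS3}: a non-flat subgraphical minimizing cone in $\R^8$ would, upon blowing up at a free-boundary point of $\p E\setminus\{0\}$, yield a non-flat subgraphical minimizing cone in dimension $\le 7$, contradicting Theorem~\ref{P1'}. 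The delicate point is that the graph structure does not come with uniform Lipschitz bounds, so one must argue with care that minimizing cones which are subgraphs of $1$-homogeneous functions are regular away from the vertex, and that the remaining singularity at the vertex is removed by the reduction --- all while keeping track of the distinguished direction $e_n$ under the blow-ups.
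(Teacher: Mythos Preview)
Your approach is the paper's: establish global minimality via the foliation by vertical translates of $u$, invoke Theorem~\ref{GM}, and recover the extra dimension from the graphicality of the blow-down set $E$.

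Two remarks on the differences. In Step~1 you skip a preliminary the paper proves separately: the limits $\lim_{x_n\to\pm\infty} u(x',x_n)$ are \emph{constants} $\lambda^\pm$ with $|\lambda^\pm|\ge 1$ (or $\pm\infty$). This is what makes the foliation argument go through cleanly: the paper first uses the maximum principle to confine any other solution with the same boundary data to the strip $\R^n\times(\lambda^-,\lambda^+)$, and only then slides the translates of the graph of $u$, which foliate exactly that strip. Simply citing ``the standard foliation/calibration argument'' from the graphicality of $\{u=0\}$ hides this step.

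Conversely, you are more explicit than the paper about the dimension-$8$ step in the one-sided-unbounded case. The paper's proof is terse here: it writes that ``the rescaled sets $\eps_k\{u>0\}$ converge to a global set $E$ which minimizes perimeter\dots\ the set $E$ is an epigraph and then the dimension can be upgraded to $n\le 8$,'' effectively appealing to the Bernstein theorem for minimal graphs without saying why the ACKS blow-down $(v,E)$ of Corollary~\ref{c39} should have $v\equiv 0$ when $n=8$. Your dimension-reduction sketch via the Weiss formula of \cite{DS3} and Theorem~\ref{P1'} at least names what must be checked; the concern you raise about preserving the graph structure under blow-up is legitimate, and the paper does not address it either.
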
 

We show the hypotheses of Theorem \ref{Main3} imply in fact that $u$ is a global minimizer for the energy $J$. Towards this aim we define $\bar u$ as the limit of $u$ at infinity,
$$ \bar u (x')= \lim_{x_n \to \infty} u(x',x_n) \quad \in (0, \infty].$$
\begin{lem} 
$\bar u \equiv \lambda^+$ for some $\lambda^+ \in [1,\infty]$.
\end{lem}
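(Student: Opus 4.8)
The plan is to first produce the limit $\bar u$ as a function on $\R^{n-1}$, then to prove a dichotomy ($\bar u\equiv+\infty$, or $\bar u$ finite everywhere), and finally to show that in the finite case $\bar u$ is forced to be a constant $\ge1$. The key auxiliary object throughout is $\phi:=u_{x_n}>0$, which solves the linearized equation $\triangle\phi=W''(u)\,\phi$ with \emph{bounded} coefficient (since $W''\equiv0$ off $[-1,1]$), so $\phi$ obeys an interior Harnack inequality with universal constants.

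First I would note that, since $u_{x_n}>0$, the slice $x_n\mapsto u(x',x_n)$ is strictly increasing, and since $\{u=0\}$ is a graph over all of $\R^{n-1}$ it is eventually positive; hence $\bar u(x')=\lim_{x_n\to\infty}u(x',x_n)\in(0,\infty]$ exists and $\bar u>0$ everywhere (this last point is exactly where graphicality enters, and it is what later lets me divide by $\bar u$). Writing $\bar u(x')=u(x',0)+\int_0^\infty u_{x_n}(x',t)\,dt$, finiteness of $\bar u(x')$ is equivalent to convergence of that integral; chaining Harnack for $\phi$ horizontally along a chain of bounded length gives $u_{x_n}(x',t)\le C\,u_{x_n}(x_0',t)$ for $|x'-x_0'|\le1$, uniformly in $t$, so the set $\{\bar u<\infty\}$ is both open and closed, hence all of $\R^{n-1}$ or empty. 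If it is empty, $\bar u\equiv+\infty$ and we are done with $\lambda^+=+\infty$.

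Assume now $\bar u$ is finite everywhere. The same Harnack bound makes $\int_0^\infty u_{x_n}(\cdot,t)\,dt$ locally bounded and shows $\int_t^\infty u_{x_n}(\cdot,s)\,ds\to0$ locally uniformly as $t\to\infty$; thus $u(\cdot,\cdot+t)\to\bar u$ locally uniformly, hence in $C^2_{loc}(\R^n)$ by interior elliptic estimates. Consequently $\bar u$, which is independent of $x_n$, is a positive entire solution of
$$\triangle_{x'}\bar u=W'(\bar u)\qquad\text{in }\R^{n-1}.$$
To finish I would set $m:=\inf_{\R^{n-1}}\bar u\ge0$, pick $x_k'$ with $\bar u(x_k')\to m$, and write $\triangle_{x'}\bar u=c(x')\bar u$ with $c=W'(\bar u)/\bar u$ and $|c|\le C_*:=\sup_{t>0}|W'(t)/t|<\infty$. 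Harnack for the positive solutions $\bar u(\cdot+x_k')$ then gives uniform local bounds, so along a subsequence $\bar u(\cdot+x_k')\to\bar u_\infty$ in $C^2_{loc}$, where $\bar u_\infty\ge m$ solves the same equation and $\bar u_\infty(0)=m$; in particular $\bar u_\infty$ attains its minimum $m$ at the origin, so $W'(m)=\triangle_{x'}\bar u_\infty(0)\ge0$. If $m>0$, this forces $m\ge1$ because $W'<0$ on $(0,1)$; then $\bar u\ge1$ everywhere, so $W'(\bar u)\equiv0$ and $\bar u$ is a positive harmonic function on $\R^{n-1}$, hence constant by Liouville, giving $\lambda^+=m\ge1$. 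If instead $m=0$, then $\bar u_\infty\ge0$ vanishes at an interior point and solves a linear equation with bounded zeroth-order coefficient, so $\bar u_\infty\equiv0$ by the strong maximum principle, i.e. $\bar u(\cdot+x_k')\to0$ locally uniformly. Fixing $t_0>0$ with $W'(t)\le-c_0t$ on $(0,t_0)$ where $c_0:=\tfrac12|W''(0)|>0$, and a radius $R^*$ whose first Dirichlet eigenvalue satisfies $\mu_1(B_{R^*})<c_0$, for large $k$ one has $\bar u<t_0$ on $B_{R^*}(x_k')$, so $\bar u$ is positive there with $-\triangle_{x'}\bar u\ge c_0\bar u$; testing against the first Dirichlet eigenfunction of $B_{R^*}(x_k')$ and using $\bar u>0$ on the boundary yields $c_0\le\mu_1(B_{R^*})$, a contradiction. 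Hence $m=0$ is impossible and $\bar u\equiv\lambda^+$ with $\lambda^+\ge1$.

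I expect the only genuinely delicate point to be the compactness step that produces an $(n-1)$-dimensional solution attaining its infimum: it rests on $\bar u>0$ (from graphicality) and on the coefficient $W'(\bar u)/\bar u$ being bounded. Once that is in hand, the case $m>0$ is closed by the harmonic Liouville theorem and the case $m=0$ by the standard fact that $-\triangle w\ge c_0w$ has no positive supersolution on large balls; neither step actually uses the subquadratic growth, which will instead be needed later when upgrading $u$ to a global minimizer.
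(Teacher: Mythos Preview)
Your proof is correct, but it follows a noticeably different route from the paper's. For the dichotomy $\bar u\equiv\infty$ versus $\bar u<\infty$, the paper works directly with $u$: it notes that in the region $\{u>0\}$ the function $u$ is superharmonic (since $W'\le 0$ on $(0,\infty)$) while $(u-1)^+$ is subharmonic, and combines the two mean value inequalities to propagate largeness of $u$ over balls of fixed radius. Your argument via Harnack for $\phi=u_{x_n}$ (which solves a linear equation with bounded potential $W''(u)$) is an equally clean alternative and makes the openness/closedness of $\{\bar u<\infty\}$ transparent through the integral representation $\bar u(x')=u(x',0)+\int_0^\infty\phi(x',t)\,dt$.

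For the lower bound $\inf\bar u\ge 1$, the paper is more direct: it first slides the explicit subsolution $w=\delta\phi_R$ (with $\phi_R$ the first Dirichlet eigenfunction on $B_R$, $R$ large so that $\lambda_R<|W''(0)|$) below $\bar u$ to get $\inf\bar u>0$, and then simply observes that the constant $\inf\bar u$, being an infimum of translates of a solution, is a supersolution, whence $W'(\inf\bar u)\ge 0$. Your route---compactness to produce a translated limit $\bar u_\infty$ attaining the infimum, followed by the casework $m>0$ (closed by $W'(m)\ge0$) versus $m=0$ (ruled out by the eigenvalue testing argument)---reaches the same conclusion. Note that your eigenvalue contradiction for $m=0$ is essentially the variational dual of the paper's sliding subsolution, so the two arguments are closely related underneath. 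The paper's version avoids the extra compactness step and the case split, at the cost of invoking the (standard) fact that an infimum of solutions is a supersolution; your version is a bit longer but entirely self-contained in terms of linear elliptic tools.
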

\begin{proof}
We show first that either $\bar u\equiv \infty$ or $\bar u < \infty$ in whole $\R^n$. Assume first that $\bar u$ takes the value $\infty$ at some point, say $\bar u(0)= \infty$. Fix $R>0$, and for each large $k \in \mathbb N$, we pick a point $x_k=(0,t_k)$ with $t_k$ sufficiently large such that $u(x_k) \ge k$, and $B_R(x_k)$  is included in $\{u>0\}$. Since $(u-1)^+ \ge 0$ is subharmonic and $u\ge 0$ is superharmonic in $B_R(x_k)$, we find from the mean value inequalities that $u \ge c_n u(x_k)$ in $B_{R/2}(x_k)$ with $c_n$ a constant that depends on $n$. This implies that $\bar u= \infty$ in $B_{R/2}$ and, since $R$ is arbitrary, we find that $\bar u\equiv \infty$.

Next we focus on the case $\bar u < \infty$ in $\R^n$. The trivial extension of $\bar u$ in the $x_n$ direction is locally the uniform limit of translations of the solution $u$, and this implies that $\bar u$ solves the same equation
\begin{equation}\label{ACL2}
\triangle \bar u=W'(\bar u) \quad \mbox{in} \quad \R^{n-1}.
\end{equation}
We claim that $\inf \bar u >0$. For this we construct an explicit subsolution $w=\delta \phi$ in a ball $B_R$, where $\phi$ denotes the first eigenvalue of the Laplacian in $B_R$. Since $W'(0)=0$, $W''(0) <0$, we can choose $R$ large and $\delta$ small such 
$$\triangle w=-\lambda_R w > W'(w), \quad \mbox{and $w < \bar u$ in $B_R$.}$$ 
By the maximum principle, as we move continuously the graphs of the translations of $w$, they always must remain below the graph of $\bar u$, and we find
$$\inf \bar u \ge \max_{B_R} w >0,$$
which proves the claim. 

On the other hand, the constant function $\inf \bar u$ is obtained as the infimum over translates of $\bar u$, hence it is a supersolution of the equation \eqref{ACL2}. Then
$$0 \le W'(\inf \bar u),$$
which gives $\inf \bar u \ge 1$. This means that $\bar u$ is an entire harmonic function that is bounded below, and the conclusion follows.
\end{proof}

\begin{proof}[Proof of Theorem \ref{Main3}] We show first that $u$ is a global minimizer by proving that it is the unique solution to \eqref{ACL2} with its own boundary data in a large ball $B_R$. Indeed, the maximum principle gives that the graph of $v$ is included in the region
$$\R^{n} \times (\lambda^-,\lambda^+) \subset \R^{n+1},$$ 
where $\lambda ^\pm$ denote the limits of $u$ at $\pm \infty$ obtained in the previous lemma. On the other hand, due to the monotonicity of $u$ in the $x_n$ direction, this region is foliated by translations of the graph of $u$. The maximum principle applied between $v$ and the leaves of the foliation imply that $v=u$. 

Now the conclusion follows from Theorem \ref{GM}. The gain of an extra dimension is due to the monotonicity assumption. This is because the restriction on $n$ is used only in the blow-down analysis performed in the cases when either $\lambda^\pm=\pm 1$ or when $u$ is bounded on one-side. Then the rescaled sets $\eps_k \{u>0\}$ converge to a global set $E$ which minimizes perimeter which must be a half-space if the dimension $n \le 7$. However, if $u$ is monotone in the $x_n$ direction, the set $E$ is an epigraph and then the dimension can be upgraded to $n\le8$.

\end{proof}


\begin{thebibliography}{9999}

\bibitem[AlC]{AlC} Allen S. M., Cahn J. W., {\it Ground State Structures in Ordered Binary Alloys with Second Neighbor Interactions}. Acta Metall. 20,  (1972) 423--433.

\bibitem[AC]{AC} Alt H. W., Caffarelli L. A., {\it Existence and regularity for a minimum problem with
free boundary.} J. Reine Angew. Math 325, 105--144 (1981) 

\bibitem[ACF]{ACF}Alt H., Caffarelli L., Friedman A., {\it Variational problems with two phases and their free boundaries.} Trans. Amer. Math. Soc. 282 (1984), no. 2, 43--461.

\bibitem[AP]{AP} Alt H.W., Phillips D., {\it A free boundary problem for semilinear elliptic
equations.} J. Reine Angew. Math., 368 (1986), pp. 63--107. 


\bibitem[AmC]{AmC} Ambrosio L., Cabr\`e X.,. {\it Entire solutions of semilinear elliptic equations in $\R^3$ and a conjecture of De Giorgi.} J. Amer. Math. Soc., 13(4):725--739, 2000.

\bibitem[ACKS]{ACKS} Athanasopoulos I., L. A. Caffarelli L., Kenig C. and Salsa S., {\it An area-Dirichlet integral
minimization problem.} Comm. Pure Appl. Math. 54 (2001), no. 4, 479--499.

\bibitem[AS]{AS} Audrito A., Serra J., {\it Interface regularity for semilinear one-phase problems.}
Adv. Math. 403 (2022), Paper No. 108380, 51 pp.

\bibitem[CC]{CC} Caffarelli L., Cordoba C., {\it Uniform convergence of a singular perturbation problem.}
Comm. Pure Appl. Math. 48 (1995), no. 1, 1--12.


 \bibitem[CS]{CS} Caffarelli L., Salsa S. {\it A geometric approach to free boundary problems.} Graduate Studies in Mathematics, 68. American Mathematical Society, Providence, RI, 2005. x+270 
 
\bibitem[CM]{CM} Chodosh O., Mantoulidis C., {\it Minimal surfaces and the Allen–Cahn equation on 3-manifolds: index, multiplicity, and curvature estimates.} Ann. of Math. (2) 191 (2020), no. 1, 213--328. 
 
  \bibitem[DS1]{DS1} De Silva D., Savin O., {\it The Alt-Phillips functional for negative powers.} Bull. Lond. Math. Soc. 55 (2023), no. 6, 2749--2777.  

 \bibitem[DS2]{DS2} De Silva D., Savin O.,  {\it Compactness estimates for minimizers of the Alt-Phillips functional of negative exponents.}
Adv. Nonlinear Stud. 23 (2023), no. 1, Paper No. 20220055, 19 pp.
 
  \bibitem[DS3]{DS3} De Silva D., Savin O.,  {\it Uniform density estimates and  $\Gamma$-convergence for the Alt-Phillips functional of negative powers.}
Math. Eng. 5 (2023), no. 5, Paper No. 086, 27 pp.

\bibitem[DKW]{DKW} Del Pino M., Kowalczyk M., and Wei J.,{\it On De Giorgi’s conjecture in dimension $N \ge 9$.} Ann.
of Math. (2), 174(3):1485--1569, 2011.
 
\bibitem[DFV]{DFV} Dipierro S., Farina A., Valdinoci E., {\it Density estimates for degenerate double-well potentials.}
SIAM J. Math. Anal. 50 (2018), no. 6, 6333--6347.
 
 
\bibitem[GG]{GG} Ghoussoub G., Gui C, {\it On a conjecture of De Giorgi and some related problems.} Math. Ann., 311(3):481--491, 1998.
 
\bibitem[HT]{HT} Hutchinson E., Tonegawa Y., {\it Convergence of phase interfaces in the van der Waals-Cahn- Hilliard theory.} Calc. Var. Partial Differential Equations, 10(1):49--84, 2000.
 
 \bibitem[JS]{JS} Jerison D., Savin O., {\it Some remarks on stability of cones for the one-phase free boundary problem,} Geom. Funct. Anal. 25 (2015), no. 4, 1240--1257. 
 
\bibitem[LWW]{LWW} Liu Y., Wang K., Wei J., {\it Global minimizers of the Allen-Cahn equation in dimension $n \ge 8$,} Journal de Mathematiques Pures et Appliquees 108 (6), 818--840.
 
\bibitem[M]{M} Modica L., {\it The gradient theory of phase transitions and the minimal interface criterion.} Arch. Rational Mech. Anal., 98(2):123--142, 1987.
 
 \bibitem[MM]{MM}  Modica L., Mortola S. {\it Un esempio di $\Gamma$-convergenza.} (Italian) Boll. Un. Mat. Ital. B (5) {\bf 14} (1977), no. 1, 285--299.
 
 \bibitem[S1]{S1} Savin O., {\it Regularity of flat level sets in phase transitions.}
Ann. of Math. (2) 169 (2009), no. 1, 41--78.

\bibitem[S2]{S2} Savin O., {\it Phase transitions, minimal surfaces and a conjecture of De Giorgi.}
International Press, Somerville, MA, 2010, 59–113.

 \bibitem[S3]{S3} Savin O., {\it Some remarks on the classification of global solutions with asymptotically flat level sets.}
Calc. Var. Partial Differential Equations 56 (2017), no. 5, Paper No. 141, 21 pp.

\bibitem[St]{St} Sternberg P., {\it The effect of a singular perturbation on nonconvex variational problems.} Arch. Rational Mech. Anal., 101(3):209--260, 1988.

\end{thebibliography}
\end{document}